\newcommand{\mat}{\mathbf{mat}}
\newcommand{\rank}{{rank}_{M}}
\newcommand{\ind}{{ind}_{M}}
\newtheorem{theorem}{Theorem}[section]
\newtheorem{lemma}[theorem]{Lemma}
\newtheorem{definition}[theorem]{Definition}
\newtheorem{proposition}[theorem]{Proposition}
\newtheorem{corollary}[theorem]{Corollary}
\newtheorem{example}[theorem]{Example}
\newcommand{\mb}{\mathbb}
\newcommand{\mc}[1]{\mathcal {#1}}
\newcommand{\m}{*_{M}}
\newcommand{\rg}{{\mathscr{R}}_M}
\newcommand{\Break}{\State \textbf{break} }
\newcommand{\nl}{{\mathscr{N}}_M}
\definecolor{lime}{HTML}{A6CE39}
\definecolor{lightblue}{rgb}{0.0, 0.0, 0.5}
\DeclareRobustCommand{\orcidicon}{%
	\begin{tikzpicture}
	\draw[lime, fill=lime] (0,0)
	circle [radius=0.16]
	node[white] {{\fontfamily{qag}\selectfont \tiny ID}};
	\draw[white, fill=white] (-0.0625,0.095)
	circle [radius=0.007];
	\end{tikzpicture}
	\hspace{-2mm}
}
\xdef\csname orcid\x\endcsname{\noexpand\href{https://orcid.org/\csname orcidauthor\x\endcsname}{\noexpand\orcidicon}}
\begin{document}

\begin{frontmatter}

\title{
$M$-QR decomposition and hyperpower iterative methods for computing outer inverses of tensors
}
\author{Ratikanta Behera$^{a,2}$, Krushnachandra Panigrahy$^{a,3}$, Jajati Keshari Sahoo$^b$, Yimin Wei$^c$\footnote{Corresponding author: Yimin Wei.} }
\vspace{.3cm}

\address{  
$^{a}$Department of Computational and Data Sciences \\
Indian Institute of Science, Bangalore, 560012, India.\\
                        \textit{E-mail$^{2}$}: \texttt{ratikanta@iisc.ac.in}\\
                        \textit{E-mail$^{3}$}: \texttt{kcp.224@gmail.com}\\
                        \vspace{.3cm}
      $^{b}$Department of Mathematics \\ Birla Institute of Technology and Science, Pilani, K K Birla Goa Campus, Zuarinagar, Sancoale, Goa 403726, India\\
        \textit{E-mail}: \texttt{jksahoo@goa.bits-pilani.ac.in}\\  
        \vspace{.3cm}
$^{c}$ School of Mathematics Sciences and Shanghai Key Laboratory of Contemporary Applied Mathematics,\\
Fudan University, Shanghai 200433, P.R. China.\\
\textit{E-mail}: \texttt{ymwei@fudan.edu.cn}
}
    
\begin{abstract}
The outer inverse of tensors plays increasingly significant roles in computational mathematics, numerical analysis, and other generalized inverses of tensors. In this paper, we compute outer inverses with prescribed ranges and kernels of a given tensor through tensor QR decomposition and hyperpower iterative method under the $M$-product structure, which is a family of tensor–tensor products, generalization of the $t$-product and $c$-product, allows us to suit the physical interpretations across those different modes. We discuss a theoretical analysis of the nineteen-order convergence of the proposed tensor-based iterative method. Further, we design effective tensor-based algorithms for computing outer inverses using $M$-QR decomposition and hyperpower iterative method. The theoretical results are validated with numerical examples demonstrating the appropriateness of the proposed methods. In addition, we examine the application of $M$-QR decomposition and hyperpower iterative methods to improve image compression efficiency and improve deblurring results in digital image processing.
\end{abstract}

\begin{keyword}
$M$-product, Moore-Penrose inverse, Drazin inverse, outer inverse, hyperpower iterative method.  
\end{keyword}
\end{frontmatter}

\section{Introduction}
Generalized inverses of tensors \cite{che20, wei16} play a vital role in the field of science and engineering such as image processing, data sciences, statistics, and machine learning. These are extensions of regular inverses to a class of tensors larger than the class of nonsingular tensors, which reduces to the usual inverse when the tensor is nonsingular. Tensor products, the t-product, introduced by Kilmer and Martin \cite{kilmer11}, have significantly impacted many areas of science and engineering, such as computer vision \cite{hao2013, hu16}, image processing \cite{Kilmer13, Martin13}, signal processing \cite{chan16, long19}, data completion and denoising \cite{hu16, wang19}, low-rank tensor recovery \cite{kong18,wang19}, and robust tensor PCA \cite{kong18, liu18}. In 2015, Kernfeld, Kilmer, and Aeron introduced a new tensor product called the $M$-product, which generalizes the previously established $t$-product by Kilmer and Martin \cite{kilmer11}. They demonstrated that the well-known $c$-product of tensors is a particular case of the $M$-product. Recently, Jin {\it et al.} \cite{jin2023} introduced the Moore-Penrose inverse of tensors using $M$-product. The authors discussed the minimum-norm solutions for tensor equations. They put forth some necessary and sufficient conditions for the reverse-order law for the Moore-Penrose inverse of tensors, all within the framework of the $M$-product. Numerical and symbolic computations of generalized inverses can be referred to  \cite{wei18}. The singular value decomposition of dual matrices and the dual Moore-Penrose inverse with applications to traveling wave identification in the brain has been developed in \cite{wei24}.

Jin {\it et al.} \cite{jin2017} introduced the Moore–Penrose inverse of the tensors in the framework of $t$-product and established its existence and uniqueness using the technique of fast Fourier transform. In addition, an algorithm was also constructed to compute the Moore–Penrose inverse of an arbitrary tensor. In 2021, Miao {\it et al.} \cite{Mia21} investigated the $t$-Drazin inverse and $t$-group inverse. Also, they proposed the $t$-Jordan canonical form and used it to express the $t$-Drazin and $t$-group inverses. Studies on the $t$-eigenvalue and eigenvectors have been discussed in \cite{chen23, Weihuixia21}. Cui and Ma \cite{cui21} established the perturbation bounds of the  $t$-Drazin and $t$-group inverses. Cong and Ma \cite{cong2022} defined the acute perturbation for the Moore-Penrose inverse of tensors under the $t$-product by $t$-spectral radius. Also, they presented the equivalent relation between acute perturbation and stable perturbation for the Moore-Penrose inverse of tensors.  In the same year, Liu and Ma \cite{liu2022} discussed the existence of the dual-core inverse based on the $t$-product and characterized the dual tensor with dual index one. Also, they established the concepts of dual Moore–Penrose inverse and group inverse. Behera {\it et al.} \cite{behera2022} studied several generalized inverses of tensors over commutative and noncommutative rings using $t$-product. Further,  Behera {\it et al.} \cite{behera2023} introduced the outer inverse of tensors based on $t$-product and supplied a $t$-QR decomposition algorithm to compute outer generalized inverses. The study of a few tensor decompositions such as $t$-UTV, $t$-Schur, $t$-CUR, and $t$-LU can be found in \cite{che22,chen23, chen22, miao20, zhu22}.

One of the most powerful iterative methods to compute the outer inverse of a matrix is hyperpower iteration (HPI$p$) with arbitrary order of convergence $p \geq 2$. A standard $p^th$-order HPI$p$ method requires $p$ number of matrix–matrix products (MMP). Since matrix products are high computational cost operations, it is natural to investigate an appropriate and simple factorization of the standard HPI$p$ method to allow achieving the $p^{th}$-order of convergence with fewer MMP per iteration. The potential of HPI$p$ methods depends on the {\it Informational Efficiency Index} (IEI) \cite{traub1964} and the {\it Computational Efficiency Index} (CEI) \cite{traub1964}. If  $p$ is the order of convergence and $n$ is the number of MMP, then
\begin{equation*}
    IEI=\dfrac{p}{n} {\rm~ and~} CEI=p^{1/n}.
\end{equation*}
A higher CEI value indicates that the method is more computationally efficient. In 2014, Stanimirovi\'c and Soleymani \cite{stanimirovic2014} studied the $7^{th}$ order HPI$7$ method and factorize it smartly so that it requires only $5$ MMP per iteration (${\rm CEI}=7^{1/5}\approx1.476$). In 2015, M. D. Petkovi\'c and M. S. Petkovi\'c \cite{hyper-1} investigated HPI$5$ and HPI$9$ methods with CIE respectively, $5^{1/4}\approx1.495$  and $9^{1/5}\approx1.552$, for computing outer inverses, which requires $4$ and $5$ number of MMP per iteration respectively. In the same year, Soleymani {\it et al.} \cite{hyper-2} studied a family of HPI methods with orders ranging from $10$ to $19$ order and verified that the $19^{th}$-order iterative method is factorized involving only $8$ number of MMP per iteration (${\rm CEI}=19^{1/8}\approx 1.445$). Among $10^{th}$ to $19^{th}$-order of convergence, the authors verified the higher efficiency index of the $19^{th}$-order method. An efficient factorization of the hyperpower method with the order of convergence $30$ (${\rm CEI}=30^{1/9}\approx1.459$) was recently constructed and analyzed in \cite{sharifi2015}. Recently, Ma {\it et al.} \cite{ma2022} improved the hyperpower method of order nineteen (HPI19) by using only seven MMP per iteration (${\rm CEI}=19^{1/7}\approx 1.523$).

The proposed approach establishes a generalized transform domain framework that extends beyond the Fourier transform domain by developing tensor QR decomposition with a comparative analysis of computations involving the $t$-product and the $c$-product. In addition, a novel tensor-based hyperpower iterative (HPI) method is developed to calculate outer inverses, with a systematic performance evaluation conducted against the established t-QR decomposition. The investigation also includes specialized cases that involve the computation of the Drazin inverses and the Moore-Penrose inverses. The main contributions of this paper are as follows.

\begin{itemize}
\item Introduce tensor $M$-QR decomposition with prescribed ranges and kernels for computing outer inverses of tensors based on $M$-product.  

\item Introduce hyperpower iterative method for computing outer inverses of tensors via $M$-product.  

\item Design effective tensor-based algorithms for computing outer inverses using $M$-QR decomposition and hyperpower iterative method.

\item Present an extensive theoretical analysis of the $19^{\text{th}}$-order convergence of the proposed tensor-based iterative method. 

\item The theoretical results are validated with numerical examples that demonstrate the appropriateness of the proposed methods.

\item We examine the application of $M$-QR decomposition and hyperpower iterative methods for image compression and deblurring.
\end{itemize}
The outline of the paper is as follows. In Section 2, we introduce the construction of the $M$-product. Then, we present a few results on $\mat$ and $\mat^{-1}$ along with tensor $M$-QR decomposition in Section 3. In this section, we design a few algorithms to compute $M$-QR decomposition and the outer inverse of tensors; then, we discuss a few examples to validate our proposed decomposition. Section 4 discusses the tensor-based hyperpower iteration method, designing an efficient algorithm based on $M$-product, and computing different generalized inverses as a particular case of the outer inverse. 
Section 5 validates the theoretical results through a few numerical examples, demonstrating the appropriateness and effectiveness of the proposed algorithms. Section 6 examines practical applications of $M$-QR decomposition and hyperpower iterative methods for image compression and deblurring tasks. Finally, in Section 7, we present the conclusions of this work and discuss potential directions for future research.

\section{Preliminaries}\label{sec:pre}
Let $\mc{A}^{(i)}=\mc{A}(:,:,i)$ represents the $i^{th}$ frontal slice of $\mc{A} \in \mb{F}^{m \times n \times p}$, where $\mb{F}$ is either $\mathbb{R}$ or $\mb{C}$. The part of a tensor obtained by fixing any two indices of a third-order tensor is a tube. Thus $\mc{A}(i, j,:)$ or $ \mc{A}(i,:,k)$ or $ \mc{A}(:,j,k)$ represents tube fiber of $\mc{A} \in \mb{F}^{m \times n \times p}$ for some fixed $i,j,k$.
The mode-$3$ fibers, which are $1 \times 1 \times p$ tensors are denoted with lowercase bold letters, e.g.,  $\mathbf{a}\in \mb{F}^{1\times 1\times n}$.
\begin{definition}{\rm(\cite{kolda09})}
   Let $\mc{A}\in {\mb{F}}^{m\times n \times k}$ be a tensor and $B\in{\mb{F}^{p\times k}}$ be a matrix. The $3$-mode product of $\mc{A}$ with $B$ is  denoted by $\mc{A}\times_{3}B\in{\mb{F}}^{m\times n\times p}$ and element-wise defined as 
      \begin{equation*}
          (\mc{A}\times_{3} B)_{ijl}=\sum_{s=1}^{k}a_{ijs}b_{ls}\quad i=1,2,\ldots, m, ~j=1,2,\ldots, n,~l=1,2\ldots, p.
      \end{equation*}  
\end{definition}
In particular, if $\mc{A}\in \mb{F}^{m\times n\times p}$ and $M\in \mb{F}^{p\times p}$ are invertible matrices, we will use `hat' notation to denote a tensor in the transform domain specified by $M$, that is,
\begin{equation*}
    \hat{\mc{A}}:=\mc{A}\times_{3}M \in \mb{R}^{m\times n \times p}.
\end{equation*}
The `hat' notation should be understood in context relative to the $M$ applied. From here onwards, we will assume $M$ is an invertible matrix. Next, we define the `$\mat$' operation for transforming a tensor into a matrix with respect to $M$.
\begin{definition}[Definition 2.6 and Lemma 3.1, \cite{kernfeld2015}]
Let $\mc{A}\in \mb{F}^{m\times n\times p}$,  $M\in \mb{F}^{p\times p}$ and $(\hat{\mc{A}})^{(i)}\in  \mb{F}^{m\times n}$ be the $i^{th}$ frontal slice of $\hat{\mc{A}}$, for $i=1,2,\ldots, p$. Then, $\mat:\mb{F}^{m\times n\times p} \rightarrow \mb{F}^{mp\times np}_M$ is defined as 

\begin{equation*}
\mat(\mc{A})=\begin{pmatrix}
(\hat{\mc{A}})^{(1)} &0 &\cdots &0\\
0 & (\hat{\mc{A}})^{(2)}  & \cdots &0\\
\vdots  &   \vdots  &    \ddots & \vdots\\
0& 0&  \cdots & (\hat{\mc{A}})^{(p)}
\end{pmatrix}.
\end{equation*}
\end{definition}
\noindent Let us consider $\mb{F}^{mp\times np}_M = \left\{ A\in \mb{F}^{mp\times np}: A \mbox{ is of the form } \begin{pmatrix}
     {A}_1 &  & O & \\
     & {A}_2 &  & \\
     & &  \ddots &\\ 
    & O&  & {A}_{p}
    \end{pmatrix} \right\}$.   The inverse operation of $\mat$, i.e., $\mat^{-1}: \mb{F}^{mp\times np}_M \rightarrow  \mb{F}^{m \times n\times p}$ is defined by 
    \begin{equation*}
        \mat^{-1}(A)=\mc{A},
    \end{equation*}
     where $\mc{A}=\hat{\mc{A}}\times_3M^{-1}$ and $\hat{\mc{A}}\in\mb{F}^{m \times n\times p} \mbox{ with } (\hat{\mc{A}})^{(i)}=A_i,~i=1,2,\ldots, p.$
Thus, we can represent a tensor $\mc{A}\in\mb{F}^{m\times n\times p}$ as follows:
\[\mc{A}=\mat^{-1}(\mat(\mc{A})).\]
Now, the $M$-product of two tensors is defined with the help of `$\mat$' and `$\mat^{-1}$'.
 \begin{definition}{\rm(\cite{kernfeld2015})}
    Let $\mc{A}\in \mb{F}^{m\times n\times p}$,  $\mc{B}\in \mb{F}^{n\times k\times p}$ and $M\in \mb{F}^{p\times p}$. Then, the M-product of tensors $\mc{A}$ and $\mc{B}$ is denoted by $\mc{A}*_{M}\mc{B}\in \mb{F}^{m\times k\times p}$ and defined as
\[\mc{A}*_{M}\mc{B}=\mat^{-1}(\mat(\mc{A})\mat(\mc{B})).
\]
\end{definition}
Note that if we choose $M$ as the unnormalized DFT matrix (MATLAB command $dftmtx(n)$ can be used to generate the DFT matrix of order $n$), then the $M$-product coincides with the $t-$product \cite{kilmer11}. The cosine transform product ($c-$product \cite{kernfeld2015}) is similarly obtained by taking the matrix $M=W^{-1}C(I + Z)$, where $C$ is the DCT matrix  (MATLAB command $dctmtx(n)$) of order $n$, $Z=diag(ones(n-1,1),1)$, and $W=diag(C(:, 1))$. We denote the matrix $W^{-1}C(I + Z)$ by $M_1$. Thus, $c-$product is obtained while choosing $M=M_1$.

\begin{definition}{\rm(\cite{kernfeld2015})}
  Let $\mc{A}\in \mb{F}^{m\times m\times p}$  and $M\in\mb{F}^{p\times p}$. Then, $\mc{A}$ is said to be an identity tensor if $(\mc{A}\times_{3}M)^{({i})}=I_m$ for $i=1,2,\ldots, p$ and $I_m$ is the identity matrix of order $m$. 
\end{definition}
Equivalently, we can say that $\mc{A}$ is an identity tensor if the block diagonals of $\mat(\mc{A})$ are identity matrices of order $m$. We denote the identity tensor $\mc{I}\in \mb{F}^{m\times m\times p}$ by $\mc{I}_{mmp}$. Further, if the block diagonals of $\mat(\mc{A})$ are permutation  matrices of order $m$ then we call $\mc{A}$ a permutation tensor.

\begin{definition}{\rm(\cite{kernfeld2015})}
Let $\mc{A}\in \mb{F}^{m\times n\times p}$  and $M\in\mb{F}^{p\times p}$.  Then, the transpose conjugate of $\mc{A}$ is denoted by $\mc{A}^*$, which can be obtained from the relation  $(\mc{A}^*\times_{3}M)^{(i)}= ((\mc{A}\times_{3}M)^{({i})})^*$ for $i=1,2,\ldots, p$. Equivalently, $\mc{A}^*=\mat^{-1}(\mat(A)^*)$.
\end{definition}
\begin{definition}{\rm(\cite{kernfeld2015})}
  Let $\mc{A}\in \mb{F}^{m\times m\times p}$  and $M\in\mb{F}^{p\times p}$. If there exists a tensor $\mc{X}$ such that $\mc{A}*_M\mc{X}=\mc{I}_{mmp}=\mc{X}*_M\mc{A}$ then $\mc{A}$ is invertible and $\mc{A}^{-1}=\mc{X}$.
\end{definition}

\begin{definition}
Let $\mc{A} \in \mb{F}^{m\times n\times p}$ and $M\in\mb{F}^{p\times p}$. Then
 \begin{enumerate}
\item[(a)] the range space of $\mc{A}$ is denoted by $\rg(\mc{A})=\{\mat^{-1}({Y}):    Y\in R(\mat(\mc{A}))\}$, where $R(\mat(\mc{A}))={diag}\left(R(A_1), R(A_2),\ldots, R(A_p)\right)$. Here $R(A)$ represents the range space of the matrix $A$ and $A_i=\left(\hat{\mc{A}}\right)^{(i)}$ for $i=1,2,\ldots,p$.
 \item[(b)] the null space of $\mc{T}$ is denoted by $\nl(\mc{A})=\{\mat^{-1}(Z): Z\in N(\mat(\mc{A}))\}$, where $N(\mat(\mc{A}))= diag(N(A_1), N(A_2),\ldots, N(A_p))$.  Here $N(A)$ represents the null space of the matrix $A$ and $A_i=\left(\hat{\mc{A}}\right)^{(i)}$ for $i=1,2,\ldots,p$.
\item[(c)] the Frobenius norm of $\mc{A}$ is defined by $\displaystyle ||\mc{A}||_{F}:=\sqrt{\sum_{i=1}^m\sum_{j=1}^n\sum_{k=1}^p|a_{ijk}|^2}$.  
\item[(d)] the rank of $\mc{A}$ is denoted by $\rank(\mc{A}):=rank(\mat(\mc{A}))$. 
 \end{enumerate}
\end{definition}

\begin{definition}
Let $M\in\mb{F}^{p\times p}$ and $\mc{A} \in \mb{F}^{m\times n\times p}$ with $\rank(\mc{A})=rp$. Then $\mc{A}=\mc{B}*_M\mc{C}$ is called full-rank decomposition of $\mc{A}$, where $\mc{B}\in \mb{F}^{m\times r\times p}$ and $\mc{C}\in \mb{F}^{r\times n\times p}$.
\end{definition}

\begin{definition}
Let  $\mc{A} \in \mb{F}^{m\times m\times p}$ and $M\in\mb{F}^{p\times p}$. Then
\begin{enumerate}
   \item[(a)] the index of $\mc{A}$ is the smallest non negative integer $k$ satisfying $\rank(\mc{A}^k)=\rank(\mc{A}^{k+1})$. The index of $\mc{A}$ is denoted by $\ind(\mc{A})$.
\item[(b)]  $\mc{A}$ is symmetric (Hermitian) if $\mat(\mc{A})=\mat(\mc{A})^T$ ($\mat(\mc{A})=\mat(\mc{A})^*)$.
 \end{enumerate}
\end{definition}

\begin{center}
Tensor equations with respect to generalized inverses.
\vspace{.2cm}
\begin{tabular}{|c|l|c|l|c|l|c|l|}
 \hline
Label & Equation  & Label & Equation & Label & Equation \\\hline 
$(1)$ & $\mc{A}*_M\mc{X}*_M\mc{A}=\mc{A}$  & $(2)$ & $\mc{X}*_M\mc{A}*_M\mc{X}=\mc{X}$ & $(4)$ & $(\mc{X}*_M\mc{A})^*=\mc{X}*_M\mc{A}$ \\\hline
$\left(1^k\right)$ &   $\mc{A}^{k+1}*_M\mc{X}=\mc{A}^k$ & $(3)$ & $(\mc{A}*_M\mc{X})^*=\mc{A}*_M\mc{X}$ &$(5)$ &  $\mc{A}*_M\mc{X} = \mc{X}*_M\mc{A}$ \\\hline
\end{tabular}
\end{center}

Let $\mc{A}\{\zeta\}$ be the set of all $\{\zeta\}$-inverses of $\mc{A}$, where $\zeta\in \{1, 1^k, 2, 3, 4,5 \}$. For instance, a tensor $\mc{X} \in \mb{F}^{n\times m\times p} $ is called a  $\{1\}$-inverse of $\mc{A} \in \mb{F}^{m\times n\times p}$, if $\mc{X}$ satisfies $(1)$, i.e., $\mc{A}*_M\mc{X}*_M\mc{A}=\mc{A}$ and we denote $\mc{X}$ by $\mc{A}^{-}$.  Similarly, an outer inverse (or $\{2\}$-inverse) of $\mc{A}$ is denoted by $\mc{A}^{(2)}$.  Further, a tensor $\mc{Y}$ from the set $\mc{A}\{2\}$ satisfying $\rg(\mc{X})=\rg(\mc{B})$ and  $\nl(\mc{X}) = \nl(\mc{C})$ is denoted by $\mc{A}^{(2)}_{\rg(\mc{B}),\nl(\mc{C})}$. 
In case of matrices, we recall the following characterization of the outer inverse, which will be useful in proving for higher-order tensors.  

  \begin{lemma}{\rm (\cite[Corollary 2.1]{stanimirovic2014})}\label{lem2.15-rn}
       Let $A \in \mb{F}^{m\times n}$ and $W \in  \mb{F}^{n\times m}$ such that $rank(WA)=rank(W)\leq rank(A)$. Assume that $X=A^{(2)}_{R(W),N(W)}$. Then 
       \begin{center}
        $ZAX=Z$ and  $XAZ=Z$ if and only $R(Z)\subseteq R(W)$ and $N(W)\subseteq N(Z)$.   
       \end{center}
       \end{lemma}
If  $\mc{X} \in \mc{A}\{1,2,3,4\}$, then  $\mc{X}$ is called the Moore-Penrose inverse \cite{jin2023} of $\mc{A}$ and denoted by $\mc{A}^\dagger$. If $\ind(\mc{A}) = k$ and $\mc{X} \in \mc{A}\{1^k, 2,5\}$, then  $\mc{X}$ is called  the Drazin inverse of $\mc{A}$ and is denoted by $\mc{A}^D$. In the case of $k=1$, we call $\mc{X}$ the group inverse of $\mc{A}$ and it is denoted by $\mc{A}^{\#}$. 
\section{Tensor $M$-QR decomposition }
In this section, we first discuss a few properties of the operations $\mat$ and $\mat^{-1}$, which are easily followed from the respective definitions. In addition, the existence of an outer inverse through full-rank decomposition and $M$-
QR decomposition has been studied under the $M$-product. 
\begin{proposition}
    Let $\mc{A} \in \mb{F}^{m\times n\times p}$,  $\mc{B} \in \mb{F}^{m\times n\times p}$, $\mc{C} \in \mb{F}^{n\times k\times p}$, and $M \in \mb{F}^{p\times p}$. Then 
\begin{enumerate}
 \item[(a)] $\mat(\alpha\mc{A}+\beta\mc{B}) = \alpha\mat(\mc{A}) +\beta\mat(\mc{B})$, where $\alpha,~\beta\in \mb{F}$.
        \item[(b)] $\mat(\mc{A})^* = \mat{(\mc{A}^*)}$.
             \item[ (c)] $\mat(\mc{A}*_M\mc{C}) = \mat(\mc{A}) \mat(\mc{C})$.
\end{enumerate}
\end{proposition}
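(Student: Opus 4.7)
The plan is to verify each of the three identities by unwinding the definition of $\mat$ and exploiting the fact that the codomain $\mb{F}^{mp\times np}_M$ consists of block-diagonal matrices, so that matrix-algebraic operations carried out on the block-diagonal form correspond slice-by-slice to operations on the frontal slices of $\hat{\mc{A}}$. In particular, two preliminary observations will do most of the work: first, the map $\mc{A}\mapsto \hat{\mc{A}}=\mc{A}\times_3 M$ is linear in $\mc{A}$ (immediate from the element-wise definition of the $3$-mode product), and second, the block-diagonal assembly $\hat{\mc{A}}\mapsto \mat(\mc{A})$ is also linear and respects block-wise multiplication and conjugate transposition because the class of block-diagonal matrices (with matching block partitions) is closed under these operations.

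For part (a), I would first note that $(\alpha\mc{A}+\beta\mc{B})\times_3 M=\alpha(\mc{A}\times_3 M)+\beta(\mc{B}\times_3 M)$ by linearity of $\times_3$, so the $i^{th}$ frontal slice satisfies $((\alpha\mc{A}+\beta\mc{B})\times_3 M)^{(i)}=\alpha(\hat{\mc{A}})^{(i)}+\beta(\hat{\mc{B}})^{(i)}$. Plugging these into the block-diagonal template defining $\mat$ and comparing block-by-block with $\alpha\mat(\mc{A})+\beta\mat(\mc{B})$ yields the identity directly.

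For part (b), I would start from the defining formula $\mc{A}*_M\mc{B}=\mat^{-1}(\mat(\mc{A})\mat(\mc{B}))$ and apply $\mat$ to both sides. The only nontrivial point is that $\mat^{-1}$ is only defined on $\mb{F}^{mp\times np}_M$, so I must check that $\mat(\mc{A})\mat(\mc{B})\in \mb{F}^{mp\times kp}_M$. This follows because the product of two conformably block-diagonal matrices is again block-diagonal, with $i^{th}$ diagonal block equal to $(\hat{\mc{A}})^{(i)}(\hat{\mc{B}})^{(i)}$. Once this is established, applying $\mat$ to both sides of the definition and using $\mat\circ\mat^{-1}=\mathrm{id}$ on $\mb{F}^{mp\times kp}_M$ gives (b).

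For part (c), I would invoke the definition $\mc{A}^*=\mat^{-1}(\mat(\mc{A})^*)$ and apply $\mat$. As in (b), the only thing to verify is that $\mat(\mc{A})^*\in \mb{F}^{np\times mp}_M$, which holds because the conjugate transpose of a block-diagonal matrix is block-diagonal with each block conjugate-transposed. The main obstacle, to the extent there is one, is the bookkeeping around the fact that $\mat^{-1}$ is only defined on block-diagonal matrices; once the closure of $\mb{F}^{mp\times np}_M$ under sum, block-compatible product, and conjugate transposition is recorded, all three parts reduce to one-line block-diagonal identifications.
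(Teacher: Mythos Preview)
Your proposal is correct and follows exactly the route the paper leaves implicit: the paper offers no proof beyond the remark that ``the following results can be easily verified from the $\mat$ and $\mat^{-1}$ operations,'' and your argument supplies precisely that verification via linearity of $\times_3$, block-diagonal closure under product and conjugate transpose, and the identity $\mat\circ\mat^{-1}=\mathrm{id}$. One minor note: the dimension mismatch in part~(a) (the statement has $\mc{A}\in\mb{C}^{m\times n\times p}$ and $\mc{B}\in\mb{C}^{n\times k\times p}$) is a typo in the paper itself, not a flaw in your reasoning.
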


\begin{proposition}
Let $M \in \mb{F}^{p\times p}$, $A\in \mb{F}_M^{mp\times np}$, $B\in \mb{F}_M^{mp\times np}$,  and $C\in \mb{F}_M^{np\times kp}$. Then 
\begin{enumerate}
 \item[(a)] $\mat^{-1}(\alpha A+\beta B) = \alpha\mat^{-1}({A}) +\beta\mat^{-1}({B})$,  where $\alpha,~\beta \in \mb{F}$. 
       \item[(b)] $\mat^{-1}(AC) = \mat^{-1}({A})*_M \mat^{-1}(C)$.
        \end{enumerate}
\end{proposition}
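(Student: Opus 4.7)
The plan is to reduce both claims to the corresponding statements about $\mat$ in the preceding proposition, using the fact that $\mat$ and $\mat^{-1}$ form a bijection between $\mb{F}^{m\times n\times p}$ and the block-diagonal algebra $\mb{F}_M^{mp\times np}$. Concretely, writing $\mc{A}:=\mat^{-1}(A)$ and $\mc{B}:=\mat^{-1}(B)$, the definition of $\mat^{-1}$ forces $\mat(\mc{A})=A$ and $\mat(\mc{B})=B$, so $\mat$ is a left-inverse of $\mat^{-1}$ on the subspaces in question.

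For part (a), I would first observe that the block-diagonal subspace $\mb{F}_M^{mp\times np}$ is closed under linear combinations, so $\alpha A+\beta B$ still lies in the domain of $\mat^{-1}$. Applying $\mat$ to $\alpha\mc{A}+\beta\mc{B}$ and using linearity of $\mat$ (previous proposition, part (a)) gives $\mat(\alpha\mc{A}+\beta\mc{B})=\alpha\mat(\mc{A})+\beta\mat(\mc{B})=\alpha A+\beta B$. Hitting both sides with $\mat^{-1}$ and invoking $\mat^{-1}\circ\mat=\mathrm{id}$ yields $\mat^{-1}(\alpha A+\beta B)=\alpha\mc{A}+\beta\mc{B}=\alpha\mat^{-1}(A)+\beta\mat^{-1}(B)$.

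For part (b), the observation is that block-diagonal matrices multiply blockwise, so $AB\in\mb{F}_M^{mp\times kp}$ and $\mat^{-1}(AB)$ is well-defined. By part (b) of the previous proposition, $\mat(\mc{A}\m\mc{B})=\mat(\mc{A})\mat(\mc{B})=AB$, and applying $\mat^{-1}$ to both sides gives the result. Equivalently, this is just the definition of the $M$-product unwrapped: $\mc{A}\m\mc{B}=\mat^{-1}(\mat(\mc{A})\mat(\mc{B}))=\mat^{-1}(AB)$.

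No step is really an obstacle; the only place one must be careful is the implicit hypothesis that $A$ and $B$ lie in the block-diagonal subspaces $\mb{F}_M^{mp\times np}$ and $\mb{F}_M^{np\times kp}$ (otherwise $\mat^{-1}$ is not defined on them), and that these subspaces are closed under sum and product, which is immediate from the block-diagonal structure. Thus the proof amounts to a short bookkeeping argument built on top of the previous proposition.
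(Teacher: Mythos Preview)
Your proposal is correct and is exactly the kind of verification the paper has in mind: the paper does not give a detailed proof of this proposition but simply remarks that it ``can be easily verified from the $\mat$ and $\mat^{-1}$ operations,'' and your argument---reducing both claims to the linearity and multiplicativity of $\mat$ from the preceding proposition via the bijection $\mat^{-1}\circ\mat=\mathrm{id}$---is precisely that verification. Your care about the implicit hypothesis that $A$ and $B$ lie in the block-diagonal subspaces (so that $\mat^{-1}$ is defined) is appropriate and in fact sharper than the paper's own statement.
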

\begin{corollary}
Let $\mc{A}\in \mb{F}^{m\times m\times p}$ be an invertible tensor and $M \in \mb{F}^{p\times p}$. Then 
\begin{enumerate}
    \item[(a)] $\mat(\mc{A})^{-1} = \mat(\mc{A}^{-1})$.
    \item[(b)] $\mc{A}^{-1}=\mat^{-1}(\mat(\mc{A})^{-1})$.
    \item[(c)] $ \mat(\mc{A}) \mat(\mc{A}^{-1}) = {I}_{mp} = \mat(\mc{A})\mat(\mc{A})^{-1}$.
\end{enumerate}
\end{corollary}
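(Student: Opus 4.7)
The plan is to leverage the fact that $\mat$ and $\mat^{-1}$ are mutually inverse bijections between $\mb{F}^{m\times m\times p}$ and the block-diagonal subspace $\mb{F}^{mp\times mp}_M$, together with the multiplicative property $\mat(\mc{A}*_M \mc{B}) = \mat(\mc{A})\mat(\mc{B})$ established in the preceding proposition. The three parts are essentially three reformulations of one observation, so I would prove (a) carefully and then derive (b) and (c) as immediate consequences.

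For part (a), I would start from the defining relations of invertibility, $\mc{A}*_M \mc{A}^{-1} = \mc{I}_{mmp} = \mc{A}^{-1}*_M \mc{A}$, and apply $\mat$ to both sides. By Proposition part (b) this yields $\mat(\mc{A})\,\mat(\mc{A}^{-1}) = \mat(\mc{I}_{mmp})$ and likewise on the other side. The one auxiliary fact I need is that $\mat(\mc{I}_{mmp}) = I_{mp}$, which follows directly from the definition of the identity tensor: its frontal slices in the $M$-transform domain are all $I_m$, so the block-diagonal matrix built by $\mat$ is exactly the $mp\times mp$ identity. Consequently $\mat(\mc{A}^{-1})$ is a genuine two-sided matrix inverse of $\mat(\mc{A})$ in $\mb{F}^{mp\times mp}$, and uniqueness of matrix inverses gives $\mat(\mc{A})^{-1} = \mat(\mc{A}^{-1})$.

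Part (b) is then obtained by applying $\mat^{-1}$ to both sides of (a), using that $\mat^{-1}\circ\mat$ is the identity on $\mb{F}^{m\times m\times p}$. Part (c) is just a restatement of the identity $\mat(\mc{A})\mat(\mc{A}^{-1}) = I_{mp}$ derived during the proof of (a), combined with the general matrix fact that $\mat(\mc{A})\mat(\mc{A})^{-1} = I_{mp}$, so both products agree with $I_{mp}$.

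There is no real obstacle here; the only subtlety worth flagging is that one must verify $\mat(\mc{A})$ is actually invertible as an $mp\times mp$ matrix (not merely within the block-diagonal subspace $\mb{F}^{mp\times mp}_M$), which is ensured because each diagonal block $(\hat{\mc{A}})^{(i)}$ must be invertible whenever $\mc{A}$ is, since the block-diagonal product equals $I_{mp}$. Once this is noted, all three statements follow in a short sequence of equalities.
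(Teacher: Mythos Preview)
Your proposal is correct and is precisely the argument the paper intends: the corollary is stated without proof, as an immediate consequence of the multiplicativity of $\mat$ from the preceding proposition together with $\mat(\mc{I}_{mmp})=I_{mp}$, and you have spelled out exactly those steps.
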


On the other hand, we can also express $\{1\}$-inverse, the Moore-Penrose and Drazin inverse of $\mc{A}$ in terms of $\mat$ and $\mat^{-1}$ operations. 
\begin{proposition}
Let  $\mc{A} \in \mb{F}^{m\times n\times p}$ and $M\in\mb{F}^{p\times p}$. Then
\begin{enumerate}
    \item[(a)] $\mc{A}^-= \mat^{-1}[{\mat(\mc{A})}^-]$.
    \item[(b)] $\mc{A}^{\dagger} = \mat^{-1}[\mat(\mc{A})^{\dagger}]$.
     \item[(b)] $\mc{A}^{D} = \mat^{-1}[\mat(\mc{A})^{D}]$ if $m=n$ and $\ind(\mc{A})=k$.
\end{enumerate}

\begin{proof} 
(a) It follows from the below expression:
    \begin{eqnarray*}
\mc{A}*_M\mat^{-1}[\mat(\mc{A})^{(1)}]*_M\mc{A} &=
&\mat^{-1}(\mat(\mc{A}))*_M \mat^{-1}[\mat(\mc{A})^{(1)}]*_M \mat^{-1}(\mat(\mc{A}))\\ 
   & =& \mat^{-1}[\mat(\mc{A})(\mat(\mc{A}))^{(1)}\mat(\mc{A})]\\
    & =& \mat^{-1}[\mat(\mc{A})]= \mc{A}.
    \end{eqnarray*}
Similarly, we can show (b) and (c).
\end{proof}
\end{proposition}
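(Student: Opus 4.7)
The plan is to exploit the fact, recorded in the two propositions and the corollary immediately above the statement, that both $\mat$ and $\mat^{-1}$ are linear, multiplicative (i.e., $\mat(\mc{A}\m\mc{B})=\mat(\mc{A})\mat(\mc{B})$), involutive inverses of one another, and commute with the conjugate transpose ($\mat(\mc{A}^{*})=\mat(\mc{A})^{*}$). Consequently, every defining equation of a generalized inverse from the table (equations $(1)$, $(2)$, $(1^{k})$, $(3)$, $(4)$, $(5)$) is preserved by applying $\mat$ or $\mat^{-1}$, so each assertion reduces to the corresponding matrix identity for $\mat(\mc{A})$.

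For part (a), I would set $\mc{X}=\mat^{-1}[\mat(\mc{A})^{-}]$ and compute $\mc{A}\m\mc{X}\m\mc{A}$ by pushing the whole expression through $\mat$: using multiplicativity, this equals $\mat^{-1}\bigl(\mat(\mc{A})\,\mat(\mc{A})^{-}\,\mat(\mc{A})\bigr)=\mat^{-1}(\mat(\mc{A}))=\mc{A}$, which is essentially the calculation already sketched. For part (b), setting $\mc{X}=\mat^{-1}[\mat(\mc{A})^{\dagger}]$, I would verify each of the four Penrose equations in turn; equations $(1)$ and $(2)$ follow exactly as in (a) from multiplicativity, while $(3)$ and $(4)$ use the additional fact that $\mat$ commutes with the conjugate transpose, so for example $(\mc{A}\m\mc{X})^{*}=\mat^{-1}\bigl((\mat(\mc{A})\mat(\mc{A})^{\dagger})^{*}\bigr)=\mat^{-1}(\mat(\mc{A})\mat(\mc{A})^{\dagger})=\mc{A}\m\mc{X}$, and symmetrically for $(4)$.

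For part (c), I first need to know that the matrix Drazin inverse exists at the right index. Since $\rank(\mc{A}^{j})=\text{rank}(\mat(\mc{A})^{j})$ by definition and by multiplicativity of $\mat$, the smallest $k$ with $\rank(\mc{A}^{k})=\rank(\mc{A}^{k+1})$ coincides with the classical index of $\mat(\mc{A})$, so $\mat(\mc{A})^{D}$ is well-defined with the same $k$. Setting $\mc{X}=\mat^{-1}[\mat(\mc{A})^{D}]$, I would then verify $(1^{k})$, $(2)$, and $(5)$ by the same push-through argument, using that $\mat(\mc{A}^{k+1})=\mat(\mc{A})^{k+1}$.

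The calculations are all routine once the homomorphism framework is in place, so there is no real obstacle; the only subtle point is the observation in (c) that the tensor index equals the matrix index of $\mat(\mc{A})$, which guarantees we are comparing the correct Drazin inverses on both sides. I would state this as a short lemma (or an inline remark) before assembling the three verifications.
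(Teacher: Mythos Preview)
Your proposal is correct and follows essentially the same approach as the paper: for (a) the paper carries out exactly the push-through computation you describe, and then simply writes ``Similarly, we can show (b) and (c).'' Your version is actually more careful than the paper's, since you make explicit the extra ingredients (compatibility of $\mat$ with the conjugate transpose for (b), and the equality $\ind(\mc{A})=\mathrm{ind}(\mat(\mc{A}))$ for (c)) that the paper leaves implicit.
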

The Moore-Penrose inverse and the Drazin inverse can be obtained from the outer inverse by specifying the range and null space, as discussed in the following results.
\begin{theorem}\label{thm-out}
    Let $\mc{A}\in\mb{F}^{m\times n\times p}$,  $\mc{B}\in\mb{F}^{m\times m\times p}$, $M\in\mb{F}^{p\times p}$, and $\ind(\mc{B})=k$. Then 
    \begin{enumerate}
        \item[(a)] $\mc{A}^{\dagger}=\mc{A}^{(2)}_{\rg(\mc{A}^*),\nl(\mc{A}^*)}$.
        \item[(b)] $\mc{B}^{D}=\mc{B}^{(2)}_{\rg(\mc{B}^k),\nl(\mc{B}^k)}$.
    \end{enumerate}
\end{theorem}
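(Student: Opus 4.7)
The plan is to lift everything to the block-diagonal matrix world via the bijection $\mat$/$\mat^{-1}$, invoke the corresponding classical matrix identities for $A^{\dagger}$ and $A^D$, and then push back down. The earlier Proposition that expresses $\mc{A}^{\dagger}$ and $\mc{A}^D$ as $\mat^{-1}$ applied to the matrix generalized inverses, together with the definitions $\rg(\mc{A})=\{\mat^{-1}(Y):Y\in R(\mat(\mc{A}))\}$ and $\nl(\mc{A})=\{\mat^{-1}(Z):Z\in N(\mat(\mc{A}))\}$, makes this dictionary essentially automatic; the uniqueness of the outer inverse with prescribed range and kernel then seals each part.

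For part (a), I would first observe that $\mc{A}^{\dagger}$ lies in $\mc{A}\{2\}$, so it \emph{is} an outer inverse; it remains to identify its range and kernel. Using the Proposition, $\mat(\mc{A}^{\dagger})=\mat(\mc{A})^{\dagger}$. The classical matrix fact $A^{\dagger}=A^{(2)}_{R(A^{*}),N(A^{*})}$ gives $R(\mat(\mc{A})^{\dagger})=R(\mat(\mc{A})^{*})=R(\mat(\mc{A}^{*}))$ and $N(\mat(\mc{A})^{\dagger})=N(\mat(\mc{A})^{*})=N(\mat(\mc{A}^{*}))$, where the second equalities use the $\mat$--adjoint compatibility $\mat(\mc{A})^{*}=\mat(\mc{A}^{*})$ from the Proposition in the excerpt. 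Translating back via $\mat^{-1}$ yields $\rg(\mc{A}^{\dagger})=\rg(\mc{A}^{*})$ and $\nl(\mc{A}^{\dagger})=\nl(\mc{A}^{*})$, and uniqueness of the outer inverse with these prescribed subspaces (uniqueness for matrices transfers through $\mat$) gives the claim.

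Part (b) follows the same template, with a single extra bookkeeping point: one has to verify that $\ind(\mc{B})=k$ on the tensor side is the same as $\text{ind}(\mat(\mc{B}))=k$ on the matrix side. Since $\rank(\mc{B}^{j})=\text{rank}(\mat(\mc{B})^{j})$ for every $j\ge 0$ (by $\mat$ being a homomorphism and the definition of $\rank$), the two indices coincide. Then the classical matrix identity $B^{D}=B^{(2)}_{R(B^{k}),N(B^{k})}$ applied to $\mat(\mc{B})$, combined with $\mat(\mc{B}^{k})=\mat(\mc{B})^{k}$, gives $\rg(\mc{B}^{D})=\rg(\mc{B}^{k})$ and $\nl(\mc{B}^{D})=\nl(\mc{B}^{k})$, and since $\mc{B}^{D}\in\mc{B}\{2\}$ (from the defining equations in the table), uniqueness of the prescribed-subspace outer inverse concludes.

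The main obstacle I anticipate is not conceptual but cosmetic: keeping the range/kernel translation bookkeeping straight, especially ensuring that the adjoint operation in the definition of $\mc{A}^{*}$ commutes with $\mat$ exactly as stated, and that $R(\mat(\mc{A})^{\dagger})=R(\mat(\mc{A})^{*})$ holds block-by-block in the block-diagonal subspace $\mb{F}_{M}^{mp\times np}$. Both of these reduce immediately to the blockwise classical statements, so once the bijection is set up cleanly, the proof collapses to two one-line invocations of the matrix results followed by the uniqueness appeal.
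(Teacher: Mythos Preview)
Your proposal is correct and follows essentially the same approach as the paper: lift to the block-diagonal matrix setting via $\mat$, establish that $\mat(\mc{A}^{\dagger})$ (resp.\ $\mat(\mc{B}^{D})$) is a $\{2\}$-inverse with range $R(\mat(\mc{A})^{*})$ and kernel $N(\mat(\mc{A})^{*})$ (resp.\ $R(\mat(\mc{B})^{k})$, $N(\mat(\mc{B})^{k})$), and then translate back through $\mat^{-1}$. The only cosmetic difference is that the paper verifies the range and kernel equalities by hand from the Penrose/Drazin equations (e.g.\ writing $\mat(\mc{A}^{*})=\mat(\mc{A}^{\dagger})\mat(\mc{A})\mat(\mc{A}^{*})$ and $\mat(\mc{A}^{\dagger})=\mat(\mc{A}^{*})\mat((\mc{A}^{\dagger})^{*})\mat(\mc{A}^{\dagger})$ to get $R(\mat(\mc{A}^{\dagger}))=R(\mat(\mc{A}^{*}))$, and similarly for kernels), whereas you cite the classical matrix identities $A^{\dagger}=A^{(2)}_{R(A^{*}),N(A^{*})}$ and $B^{D}=B^{(2)}_{R(B^{k}),N(B^{k})}$ as black boxes.
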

\begin{proof}
 (a) Let $\mc{X}=\mc{A}^\dagger$. Clearly $\mc{X}\in \mc{A}\{2\}$. Next,  we need to show $\rg(\mc{X})=\rg(\mc{A}^{*})$ and $\nl(\mc{X})=\nl(\mc{A}^*)$. From 
 \[\mat(\mc{A}^*)=\mat(\mc{A}^\dagger)\mat(\mc{A})\mat(\mc{A}^*)=\mat(\mc{X})\mat(\mc{A})\mat(\mc{A}^*)\]
 and  
 \[\mat(\mc{X})=\mat(\mc{A}^\dagger)=\mat(\mc{A}^*)\mat((\mc{A}^\dagger)^*)\mat(\mc{A}^{\dagger}),\]
 we get $R(\mat(\mc{A}^*))=R(\mat(\mc{X}))$. Thus by inverse operation, we have $\rg(\mc{X})=\rg(\mc{A}^{*})$.  If $Z\in N(\mat({A}^\dagger))$ then $\mat(\mc{A}^\dagger)Z=O$. Now
\[
   \mat(\mc{A}^*)Z=\mat(\mc{A}^*\m\mc{A}\m\mc{A}^\dagger)Z =\mat(\mc{A}^*\m\mc{A})\mat(A^{\dagger})Z=O.   
 \]
 Thus,  $N(\mat(\mc{A}^\dagger))\subseteq N(\mat(A^*)$. Conversely, if $Z\in N(\mat(\mc{A}^*))$ then $\mat(\mc{A}^*)Z=O$. In addition,
 \[\mat(\mc{A}^\dagger)Z=\mat(\mc{A}^\dagger\m(\mc{A}^\dagger)^*\m\mc{A}^*)Z=\mat(\mc{A}^\dagger\m(\mc{A}^\dagger)^*)\mat(\mc{A}^*)Z=O.\]
 Hence, we obtain $N(\mat{X})=N(\mat(\mc{A}^{*}))$. Again by inverse operation, we have $\nl(\mc{X})=\nl(\mc{A}^{*})$. \\
 (b) From $\mat(\mc{B})^D=\mat(\mc{B}^k)\mat((\mc{B}^D)^{k+1})\mbox{ and }\mat(\mc{B}^k)=\mat(\mc{B}^D)\mat(\mc{B}^{k+1})$, we obtain $R(\mat(\mc{B}^D))=R(\mat(\mc{B}^k))$ and hence  
$\rg(\mc{B}^D)=\rg(\mc{B}^k)$.  If $Z\in N(\mat(\mc{B}^D))$ then $\mat(\mc{B}^D)Z=O$. Now $\mat(\mc{B}^k)Z=\mat(\mc{B}^{k+1}\m\mc{B}^D)Z=\mat(\mc{B}^{k+1})\mat(\mc{B}^D)Z=O$ and subsequently, $N(\mat(\mc{B}^D))\subseteq N(\mat(\mc{B}^k))$. Conversely, if $Z\in N(\mat(\mc{B}^k))$ then $\mat(\mc{B}^k)Z=O$. Further, 
\[\mat(\mc{B}^D)Z=(\mat(\mc{B}^D)^{k+1}\mat(\mc{B}^k)Z=O.\]
Thus $N(\mat(\mc{B}^D))= N(\mat(\mc{B}^k))$. Applying inverse operation, we obtain $\nl(\mc{X})=\nl(\mc{B}^D)=\nl(\mc{B}^k)$.
 \end{proof}

\begin{lemma}\label{pfullrank}
Let $ \mc{A}\in\mathbb{F}^{m\times n\times p},~\mc{W}\in\mathbb{F}^{n\times m\times p},~ \mc{M}\in\mathbb{F}^{p\times p}$, and $\rank(\mc{A})=rp$. Assume that:
\begin{enumerate}
    \item[(a)] $\rg(\mc{W})$  is a subspace of $\mb{F}^{n\times1\times p}$ with dimension $sp\leq rp$.
    \item[(b)] $\nl(\mc{W})$ is a subspace of $\mb{F}^{m\times1\times p}$ with dimension $mp-sp$.
    \item[(c)] $\mc{W}=\mc{B}*_M\mc{C}$ is a full-rank decomposition of $\mc{W}$.
    \item[(d)] $\mc{A}^{(2)}_{\rg(\mc{W}),\nl(\mc{W})}$ exists.
\end{enumerate}
Then
 $\mc{C}*_M\mc{A}*_M\mc{B}$ is invertible  
   and $\mc{A}^{(2)}_{\rg(\mc{W}),\nl(\mc{W})}=\mc{B}\m(\mc{C}*_M\mc{A}*_M\mc{B})^{-1}*_M\mc{C}=\mc{A}^{(2)}_{\rg(\mc{B}),\nl(\mc{C})}$.
\end{lemma}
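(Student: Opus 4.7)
The plan is to reduce the claim to the classical matrix analogue by applying the operator $\mat$, invoke the matrix fact, and pull the conclusion back through $\mat^{-1}$. Write $A=\mat(\mc{A})$, $B=\mat(\mc{B})$, $C=\mat(\mc{C})$ and $W=\mat(\mc{W})$; by Proposition 2.8(b), $W=BC$. Because $\mc{W}=\mc{B}*_M\mc{C}$ is a full-rank decomposition in the sense of Definition 2.14, the block-diagonal matrices $B$ and $C$ have full column rank and full row rank $sp$ respectively, so $W=BC$ is a genuine rank factorization at the matrix level. The standard identities $R(BC)=R(B)$ and $N(BC)=N(C)$ then translate through $\mat^{-1}$ into $\rg(\mc{W})=\rg(\mc{B})$ and $\nl(\mc{W})=\nl(\mc{C})$, which already yields the second asserted equality $\mc{A}^{=}_{\rg(\mc{W}),\nl(\mc{W})}=\mc{A}^{=}_{\rg(\mc{B}),\nl(\mc{C})}$.

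Next I would verify invertibility of $\mc{C}*_M\mc{A}*_M\mc{B}$, equivalently of the matrix $CAB$. Put $X=\mat(\mc{A}^{=}_{\rg(\mc{W}),\nl(\mc{W})})$ and apply Lemma \ref{lem2.15-rn} with $Z=W$: the inclusions $R(W)\subseteq R(W)$ and $N(W)\subseteq N(W)$ are trivial, while the rank hypothesis $\textup{rank}(WA)=\textup{rank}(W)\leq \textup{rank}(A)$ is exactly the standard existence condition behind assumption (d). This delivers $WAX=W$ and $XAW=W$. Substituting $W=BC$ and exploiting that $B$ admits a left inverse and $C$ admits a right inverse in the block-diagonal class $\mb{F}^{mp\times np}_M$, a rank sandwich $sp=\textup{rank}(W)=\textup{rank}(BC\cdot AX\cdot BC)\leq \textup{rank}(CAB)\leq \textup{rank}(W)=sp$ forces $\textup{rank}(CAB)=sp$, so the $sp\times sp$ block-diagonal matrix $CAB$ is invertible.

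Once invertibility is in hand, define $Y:=B(CAB)^{-1}C$ at the matrix level. A direct computation shows $YAY=B(CAB)^{-1}(CAB)(CAB)^{-1}C=Y$, while $R(Y)=R(B)=R(W)$ and $N(Y)=N(C)=N(W)$ follow because $B$ has full column rank and $C$ has full row rank. By uniqueness of the $\{2\}$-inverse with prescribed range and null space, $Y=X$. Applying $\mat^{-1}$ and using Proposition 2.9(b) to convert matrix multiplication into $*_M$, together with Corollary 2.10 to identify $\mat^{-1}((CAB)^{-1})$ with $(\mc{C}*_M\mc{A}*_M\mc{B})^{-1}$, yields the desired formula $\mc{A}^{=}_{\rg(\mc{W}),\nl(\mc{W})}=\mc{B}*_M(\mc{C}*_M\mc{A}*_M\mc{B})^{-1}*_M\mc{C}$.

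The main obstacle is the invertibility step: although the rank sandwich is conceptually clean, the middle inequality requires care because $B$ and $C$ are only one-sided invertible, and the full-rank factorization forces any such one-sided inverse to be chosen inside the block-diagonal class so that the subsequent $\mat^{-1}$ pullback remains valid. An equivalent route is to argue directly from $WAX=W$ and $XAW=W$ by inserting explicit left/right inverses of $B$ and $C$ to produce two-sided inverses for $CAB$, but in either case the bookkeeping with the one-sided inverses is the delicate point.
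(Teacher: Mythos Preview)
Your strategy coincides with the paper's: both reduce the statement to block-diagonal matrices via $\mat$, establish the matrix version, and transport the conclusion back through $\mat^{-1}$. The only difference is that the paper does not argue the matrix step at all---it simply invokes an external result (Proposition~2.1 of \cite{peduotmat}) asserting that under the translated hypotheses $CAB$ is invertible and $A^{(2)}_{R(W),N(W)}=B(CAB)^{-1}C=A^{(2)}_{R(B),N(C)}$---whereas you supply a self-contained derivation of that fact. So the architecture is identical; you are just unpacking what the paper cites.

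One concrete slip: the product $BC\cdot AX\cdot BC$ in your rank sandwich is ill-typed, since $BC\in\mb{F}^{np\times mp}$ and $AX\in\mb{F}^{mp\times mp}$, so $(BCAX)(BC)$ tries to multiply an $np\times mp$ matrix by an $np\times mp$ matrix. The intended inequality is easy to repair. Using $XAX=X$ together with $R(X)=R(W)=R(B)$ and $N(X)=N(W)=N(C)$, write $X=BP=QC$ for suitable $P,Q$; then $X=(QC)A(BP)=Q(CAB)P$, whence $sp=\textup{rank}(X)\le\textup{rank}(CAB)\le sp$. Equivalently, your ``alternative route'' works: left-cancelling $B$ in $WAX=W$ gives $CAX=C$, and since $R(X)=R(B)$ one has $X=BP$, so $C=(CAB)P$ forces $\textup{rank}(CAB)=sp$. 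With this fix the invertibility step is complete, and the remainder of your argument (verifying $YAY=Y$, $R(Y)=R(B)$, $N(Y)=N(C)$, uniqueness, and the $\mat^{-1}$ pullback) is correct.
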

\begin{proof}
Let $A=\mat(\mc{A})$, $W=\mat(\mc{W})$, $B=\mat(\mc{B})$ and $C=\mat(\mc{C})$. Using the assumptions, we have  
\begin{itemize}
    \item $R(\mat(\mc{W}))$  is a subspace of $\mb{F}^{np}$ with dimension $sp\leq rp$.
    \item $N(\mat(\mc{W
    }))$ is a subspace of $\mb{F}^{mp}$ with dimension $mp-sp$.
    \item $\mat(\mc{W})=\mat(\mc{B})\mat(\mc{C})$ is a full-rank decomposition of $\mat(\mc{A})$.
    \item $\mat(\mc{A})^{(2)}_{R(\mat(\mc{W})),N(\mat(\mc{W}))}$ exists.
\end{itemize}
By using Theorem 3.1 \cite{sheng2007}, we obtain
$\mat(\mc{C})\mat(\mc{A})\mat(\mc{B})$ invertible and 
\begin{eqnarray*}\label{fl-rank}
\mat(\mc{A})^{(2)}_{R(\mat(\mc{W})),N(\mat(\mc{W}))}&=&\mat(\mc{B})(\mat(\mc{C})\mat(\mc{A})\mat(\mc{B}))^{-1}\mat(\mc{C})\\
&=&\mat(\mc{A})^{(2)}_{R(\mat(\mc{B})),N(\mat(\mc{C}))}. 
\end{eqnarray*}
By applying $\mat^{-1}$, we obtain  $\mc{C}*_M\mc{A}*_M\mc{B}$ is invertible and  
\begin{eqnarray*}
\mc{A}^{(2)}_{\rg(\mc{W}),\nl(\mc{W})}=\mc{B}*_M(\mc{C}*_M\mc{A}*_M\mc{B})^{-1}*_M\mc{C}=\mc{A}^{(2)}_{\rg(\mc{B}),\nl(\mc{C})}.
\end{eqnarray*}
\end{proof}

In the result below, we discuss the computation of the outer inverse by using QR decomposition based on an arbitrary $M$-product. It will be a more general form and serves as an extension of the earlier work on the tensors via $t$-product \cite{behera2023} and matrices by Chan \cite{Chan1987rank}.

\begin{theorem}\label{qrthm}
 Let $ \mc{A}\in\mathbb{F}^{m\times n\times p},~\mc{W}\in\mathbb{F}^{n\times m\times p},~ M\in\mathbb{F}^{p\times p}$,   $\rank(\mc{A})=rp$ and  $\rank(\mc{W})=sp$ with $sp\leq rp$. Consider the $\mc{Q}*_M\mc{R}$  decomposition of $\mc{W}$ be of the form
 \begin{equation}
     \mc{W}*_M\mc{P}=\mc{Q}*_M\mc{R},
 \end{equation}
 where $\mc{P}\in\mb{F}^{m\times m\times p}$ is a permutation tensor, $\mc{Q}\in \mb{F}^{n\times n\times p}$ satisfying $\mc{Q}*_M\mc{Q}^*=\mc{I}_{nnp}$, and $\mc{R}\in\mb{F}^{n\times m\times p}$ with $\rank(\mc{R})=sp$. The tensor $\mc{P}$ to be chosen so that it partitions $\mc{Q}$ and $\mc{R}$ in the following form
 \begin{equation}
     \mc{Q}=\begin{bmatrix}
      \tilde{\mc{Q}} & \mc{Q}_{12}   
     \end{bmatrix},~\mc{R}=\begin{bmatrix}
         \mc{R}_{11} & \mc{R}_{22}\\
         \mc{O} &\mc{O}
     \end{bmatrix}=\begin{bmatrix}
         \tilde{\mc{R}}\\
         \mc{O}
     \end{bmatrix},
 \end{equation}
 where $\tilde{\mc{Q}}\in\mb{F}^{n\times s\times p}$, $\mc{R}_{11}\in\mb{F}^{s\times s\times p}$ is nonsingular, and $\tilde{\mc{R}}\in\mb{F}^{s\times m\times p}$. If $\mc{A}^{(2)}_{\rg(\mc{W}),\nl(\mc{W})}$ exists then:
 \begin{enumerate}
\item[ (a)] $\tilde{\mc{R}}*_M\mc{P}^**_M\mc{A}*_M\tilde{\mc{Q}}$ is invertible.
\item[ (b)] $\mc{A}^{(2)}_{\rg(\mc{W}),\nl(\mc{W})}=\tilde{\mc{Q}}*_M(\tilde{\mc{R}}*_M\mc{P}^**_M\mc{A}*_M\tilde{\mc{Q}})^{-1}*_M\tilde{\mc{R}}*_M\mc{P}^*$.
\item[(c)] $\mc{A}^{(2)}_{\rg(\mc{W}),\nl(\mc{W})}=\mc{A}^{(2)}_{\rg(\tilde{\mc{Q}}),\nl(\tilde{\mc{R}}*_M\mc{P}^*)}$.
\item[\rm (d)] $\mc{A}^{(2)}_{\rg(\mc{W}),\nl(\mc{W})}=\tilde{\mc{Q}}*_M({\tilde{\mc{Q}}}^**_M\mc{W}*_M\mc{A}*_M\tilde{\mc{Q}})^{-1}*_M\tilde{\mc{Q}}^**_M\mc{W}$.
 \end{enumerate}
\end{theorem}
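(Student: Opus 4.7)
The plan is to reduce every part of the theorem to Lemma~\ref{pfullrank} by extracting a full-rank decomposition of $\mc{W}$ from its $\mc{Q}*_M\mc{R}$ factorization. First I would use that a permutation tensor satisfies $\mc{P}*_M\mc{P}^*=\mc{I}_{mmp}$ to rewrite the given factorization as $\mc{W}=\mc{Q}*_M\mc{R}*_M\mc{P}^*$. The prescribed block structure $\mc{Q}=[\tilde{\mc{Q}},\mc{Q}_{12}]$ and $\mc{R}=[\tilde{\mc{R}};\mc{O}]$ then collapses the product to $\mc{Q}*_M\mc{R}=\tilde{\mc{Q}}*_M\tilde{\mc{R}}$; this block computation is carried out at the level of $\mat(\cdot)$, where each frontal block inherits the same partition and routine matrix block multiplication applies. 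Setting $\mc{B}=\tilde{\mc{Q}}$ and $\mc{C}=\tilde{\mc{R}}*_M\mc{P}^*$ gives the candidate decomposition $\mc{W}=\mc{B}*_M\mc{C}$.

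Next I would verify that this is indeed a full-rank decomposition so that Lemma~\ref{pfullrank} applies. Since $\mc{Q}*_M\mc{Q}^*=\mc{I}_{nnp}$, each frontal block of $\mat(\mc{Q})$ is a square unitary matrix, so its first $s$ columns form an isometry; equivalently $\tilde{\mc{Q}}^**_M\tilde{\mc{Q}}=\mc{I}_{ssp}$ and $\rank(\tilde{\mc{Q}})=sp$. The nonsingularity of $\mc{R}_{11}$ yields $\rank(\tilde{\mc{R}})=sp$, and right-multiplication by the permutation $\mc{P}^*$ preserves rank, so $\rank(\mc{C})=sp$ as well. Lemma~\ref{pfullrank} then delivers at once (a) the invertibility of $\mc{C}*_M\mc{A}*_M\mc{B}=\tilde{\mc{R}}*_M\mc{P}^**_M\mc{A}*_M\tilde{\mc{Q}}$, (b) the stated formula for $\mc{A}^{=}_{\rg(\mc{W}),\nl(\mc{W})}$, and (c) the range-null identification $\mc{A}^{=}_{\rg(\mc{W}),\nl(\mc{W})}=\mc{A}^{=}_{\rg(\tilde{\mc{Q}}),\nl(\tilde{\mc{R}}*_M\mc{P}^*)}$.

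For part~(d) I would use the sub-isometry identity once more: $\tilde{\mc{Q}}^**_M\mc{W}=\tilde{\mc{Q}}^**_M\tilde{\mc{Q}}*_M\tilde{\mc{R}}*_M\mc{P}^*=\tilde{\mc{R}}*_M\mc{P}^*$, so both $\tilde{\mc{Q}}^**_M\mc{W}$ and $\tilde{\mc{Q}}^**_M\mc{W}*_M\mc{A}*_M\tilde{\mc{Q}}$ coincide with the factors already appearing on the right-hand side of (b), making (d) merely a rewriting of (b). The main obstacle I anticipate is precisely this sub-isometry step $\tilde{\mc{Q}}^**_M\tilde{\mc{Q}}=\mc{I}_{ssp}$, since the hypotheses only give unitarity of the full $\mc{Q}$; the cleanest resolution is to pass through $\mat(\mc{Q})$ and invoke the elementary matrix fact that any column submatrix of a unitary matrix is an isometry, after which the remainder of the argument is pure bookkeeping inside Lemma~\ref{pfullrank}.
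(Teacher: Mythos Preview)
Your proposal is correct and follows essentially the same route as the paper: extract the full-rank decomposition $\mc{W}=\tilde{\mc{Q}}*_M(\tilde{\mc{R}}*_M\mc{P}^*)$ from the block partition, check the rank conditions, invoke Lemma~\ref{pfullrank} for (a)--(c), and then obtain (d) by the substitution $\tilde{\mc{R}}*_M\mc{P}^*=\tilde{\mc{Q}}^**_M\mc{W}$. Your justification of the rank equalities via the sub-isometry $\tilde{\mc{Q}}^**_M\tilde{\mc{Q}}=\mc{I}_{ssp}$ and the nonsingularity of $\mc{R}_{11}$ is in fact more explicit than the paper, which simply asserts the ranks without detail.
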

 \begin{proof}
 Let $ \mc{W}*_M\mc{P}=\mc{Q}*_M\mc{R}$. From the partition of $\mc{Q}$ and $\mc{R}$, we get $\mc{W}=\mc{Q}\m\mc{R}\m\mc{P}^*=\tilde{\mc{Q}}\m\tilde{\mc{R}}\m\mc{P}^*$ and consequently, $\mat(\mc{W})=\mat(\tilde{\mc{Q}})\mat(\tilde{\mc{R}}*\mc{P}^{*})$. It can be proved that $rank(\mat(\tilde{\mc{Q}}))=sp=rank(\mat(\tilde{\mc{R}}*_M\mc{P}^*))$. Thus  $\tilde{\mc{Q}}\m\tilde{\mc{R}}\m\mc{P}^*$ is full-rank decomposition of $\mc{W}$. Hence, by Lemma \ref{pfullrank}, we conclude the proof of parts (a), (b), and (c).\\
    (d) It follows by substituting $\tilde{\mc{R}}*_M\mc{P}^*={\tilde{\mc{Q}}}^*\m\mc{W}$ in part (b).
 \end{proof}
 {\small{
\begin{algorithm}[H]
 \caption{$M$-QR Decomposition} \label{alg:mqr}
\begin{algorithmic}[1]
  \Procedure{$M$-QR}{$\mc{A}$}
\State {\bf Input} Tensor $\mc{A} \in \mb{F}^{m \times n\times p}$ and an invertible matrix $M\in\mb{F}^{p\times p}$
\State Compute $\hat{\mc{A}}=\mc{A}\times_3 M$
\For{$i \gets 1$ to $p$} 
   \State $[\hat{\mc{Q}}(:,:,i), \hat{\mc{R}}(:,:,i), \hat{\mc{P}}(:,:,i)] = \texttt{qr}( \hat{\mc{A}}(:,:,i))$ 
   \EndFor
\State Compute $\mc{Q}=\hat{\mc{Q}}\times_3M^{-1},~\mc{R}=\hat{\mc{R}}\times_3M^{-1},~\mc{P}=\hat{\mc{P}}\times_3M^{-1}$
\State \Return $\mc{Q},~\mc{P},~\mc{R}$
\EndProcedure
 \end{algorithmic}
\end{algorithm}
}}

  {\small{
\begin{algorithm}[H]
  \caption{Computation of $\mc{A}^{(2)}_{\rg({W}), \nl({W})}$ by using $M$-QR decomposition} \label{AlgouterQR}
  \begin{algorithmic}[1]
 \Procedure{outerinv}{$\mc{A}$}
  \State  {\bf Input} $\mc{A} \in \mb{F}^{m \times n\times p}$, $\mc{W} \in \mb{F}^{n \times m\times p}$,  $M\in\mb{F}^{p\times p}$,  $det(M) \neq 0$, $\rank(\mc{W})=sp \leq rp = \rank(\mc{A})$
\State Compute $\mc{P}, \mc{Q}, \mc{R}$ using Algorithm \ref{alg:mqr}

\State $\mat(\mc{Q})$ and $\mat(\mc{R})$ will be in the following form
 \begin{equation*}
     \mat(\mc{Q})=\begin{bmatrix}
      \tilde{{Q}} & \bar{{Q}}   
     \end{bmatrix},~\mat({R})=\begin{bmatrix}
         \bar{R} & \bar{\bar{R}}\\
         O &O
     \end{bmatrix}=\begin{bmatrix}
         \tilde{{R}}\\
         O
     \end{bmatrix},
 \end{equation*}
 \hspace{.5cm} where $\tilde{Q}\in\mb{F}^{np\times sp}, \tilde{R}\in\mb{F}^{sp\times mp}$  and $\bar{{R}}\in\mb{F}^{sp\times sp}$ (nonsingular) are block matrices 
\State Compute $  \tilde{\mc{Q}}=\mat^{-1}(\tilde{{Q}})$ and $\tilde{\mc{R}}=\mat^{-1}(\tilde{R})$
\State Compute $\mc{Y} = \tilde{\mc{R}}*_M\mc{P}^**_M\mc{A}*_M\tilde{\mc{Q}}$ and $\mc{Z} = \tilde{\mc{Q}}^**_M\mc{W}*_M\mc{A}*_M\tilde{\mc{Q}}$
\State Compute $\mc{L}_{\mc{Y}}=\mc{Y}\times_3 M$ and $\mc{L}_{\mc{Z}}=\mc{Z}\times_3 M$
\For{$i \gets 1$ to $p$} 
   \State $\hat{\mc{Y}}(:,:,i) = (\mc{L}_{\mc{Y}}(:,:,i))^{-1}$,~~$\hat{\mc{Z}}(:,:,i) =(\mc{L}_{\mc{Z}}(:,:,i))^{-1}$  
   \EndFor
\State Compute $\mc{X}_1=\hat{\mc{Y}}\times_3M^{-1}$ and $\mc{X}_2=\hat{\mc{Z}}\times_3M^{-1}$
\State Compute
$\mc{A}^{(2)}_{\rg(\mc{W}),\nl(\mc{W})} =\tilde{\mc{Q}}*_M\mc{X}_1*_M\tilde{\mc{R}}*_M\mc{P}^*=\tilde{\mc{Q}}*_M\mc{X}_2*_M\tilde{\mc{Q}}^**_M\mc{W}$
   \State \textbf{return } $\mc{A}^{(2)}_{\rg({W}), \nl({W})}$ 
    \EndProcedure
  \end{algorithmic}
\end{algorithm}
}}
 
\begin{corollary}\label{MCor}
Let $ \mc{A}\in\mathbb{F}^{m\times n\times p},~\mc{W}\in\mathbb{F}^{n\times m\times p},~ \mc{M}\in\mathbb{F}^{p\times p}$,   $\rank(\mc{A})=rp$ and  $\rank(\mc{W})=sp$ with $sp\leq rp$. Let $\tilde{\mc{Q}}*(\tilde{\mc{R}}*\mc{P}^*)$ be the full-rank decomposition of $\mc{W}$, where $\mc{P}$, $\mc{Q}$, $\mc{R}$, $\tilde{\mc{Q}}$ and $\tilde{\mc{R}}$ as defined in Theorem \ref{qrthm}. Then 
$$\mc{A}^{(2)}_{\rg(\tilde{\mc{Q}}),\nl(\tilde{\mc{R}}*\mc{P}^*)}=\left\{
\begin{array}{ll}
\mc{A}^\dagger, & \mbox{ if }\mc{W}=\mc{A}^*.\\
\mc{A}^D, & \mbox{ if }\mc{W}=\mc{A}^k,\ k\geq ind(\mc{A}),\mbox{ and }\mc{A}\in\mb{F}^{m\times m\times p}.
\end{array}
\right.$$
 \end{corollary}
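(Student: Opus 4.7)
The plan is to reduce the corollary directly to the identification $\mc{A}^{=}_{\rg(\tilde{\mc{Q}}),\nl(\tilde{\mc{R}}*_M\mc{P}^*)} = \mc{A}^{=}_{\rg(\mc{W}),\nl(\mc{W})}$ that was already established as part (c) of Theorem \ref{qrthm}, and then substitute the two special choices of $\mc{W}$. Once $\mc{W}$ is replaced by $\mc{A}^*$ or by $\mc{A}^k$, the problem becomes: show that the outer inverse with prescribed range and kernel coincides with the Moore--Penrose inverse in the first case and with the Drazin inverse in the second. Both of these are exactly Theorem \ref{thm-out}, so no additional machinery is needed.

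For case (a), I would first check the hypotheses of Theorem \ref{qrthm} are satisfied with $\mc{W}=\mc{A}^*$: since $\rank(\mc{A}^*)=\rank(\mc{A})=rp$, the inequality $sp\leq rp$ is actually an equality here, and the $M$-QR decomposition of $\mc{A}^*$ with pivoting produces the blocks $\tilde{\mc{Q}}$ and $\tilde{\mc{R}}$ as in the theorem. Then Theorem \ref{qrthm}(c) gives
\[
\mc{A}^{=}_{\rg(\tilde{\mc{Q}}),\nl(\tilde{\mc{R}}*_M\mc{P}^*)} = \mc{A}^{=}_{\rg(\mc{A}^*),\nl(\mc{A}^*)},
\]
and Theorem \ref{thm-out}(a) identifies the right-hand side with $\mc{A}^\dagger$.

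For case (b), set $\mc{W}=\mc{A}^k$ with $k\geq \ind(\mc{A})$. Here I need to verify that $\rank(\mc{A}^k)\leq \rank(\mc{A})$, which is immediate from $\mat(\mc{A}^k)=\mat(\mc{A})^k$ together with the matrix fact $\textup{rank}(A^k)\leq \textup{rank}(A)$. Moreover, for $k\geq \ind(\mc{A})$ the rank of $\mc{A}^k$ stabilises, so $\rg(\mc{A}^k)=\rg(\mc{A}^{\ind(\mc{A})})$ and $\nl(\mc{A}^k)=\nl(\mc{A}^{\ind(\mc{A})})$ (again passing through $\mat$ and using the stabilisation of powers of a matrix at its index). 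Applying Theorem \ref{qrthm}(c) with this choice of $\mc{W}$ and then Theorem \ref{thm-out}(b) yields
\[
\mc{A}^{=}_{\rg(\tilde{\mc{Q}}),\nl(\tilde{\mc{R}}*_M\mc{P}^*)} = \mc{A}^{=}_{\rg(\mc{A}^k),\nl(\mc{A}^k)} = \mc{A}^D.
\]

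The main (and essentially only) obstacle is the bookkeeping around existence of the outer inverse and the observation that powers beyond the index do not change the range or null space; this is what licenses the use of Theorem \ref{thm-out}(b), whose statement is for $k=\ind(\mc{A})$, in the apparently more general regime $k\geq \ind(\mc{A})$. Everything else is a direct substitution into results already proved in the paper.
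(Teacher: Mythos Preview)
Your proposal is correct and matches the paper's intended argument: the paper states Corollary~\ref{MCor} without proof, treating it as an immediate consequence of Theorem~\ref{qrthm}(c) combined with Theorem~\ref{thm-out}, which is precisely the reduction you carry out. Your additional remark that $\rg(\mc{A}^k)$ and $\nl(\mc{A}^k)$ stabilise for $k\geq \ind(\mc{A})$ correctly bridges the gap between the statement of Theorem~\ref{thm-out}(b) (which is phrased for $k=\ind(\mc{A})$) and the corollary's more general hypothesis.
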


\section{Hyperpower iteration method}\label{sec:hpi}
In this section, we develop the hyperpower iteration method \cite{hyper-1,hyper-2} for computing outer inverses of third-order tensors in the framework of $M$-product with a specific emphasis on minimizing tensor product operations per iteration cycle. Following the matrix-based hyperpower method \cite{ma2022}, our proposed iteration method achieves superior computational efficiency. We provide a rigorous mathematical analysis that establishes the theoretical convergence properties of this high-order method, including detailed proofs of its asymptotic behavior and error bounds. Extensive numerical experiments show the efficiency of our algorithm, which significantly improves the performance, particularly for large-scale, sparse multilinear systems. The results confirm that our method maintains numerical stability while substantially reducing computational complexity, making it particularly suitable for image-deblurring applications. The standard form of the $p(\geq 2)$-th order hyperpower iteration (HPI$p$) is 
\begin{equation}\label{eq:itrhpip}
\mc{Z}_{j+1}=\mc{Z}_{j}\m\left(\mc{I}+\mc{R}_{j}+\mc{R}_{j}^{2}+\cdots+\mc{R}_{j}^{p-1}\right),
\end{equation}
where $\mc{R}_{j}=\mc{I}-\mc{A}\m\mc{Z}_{j},$ $j\in\{0,1,2,\ldots\}$.  Note that the iterative method 
\eqref{eq:itrhpip} requires $p$ number of tensor tensor products (TTP) per cycle. So the potential of HPI$p$ methods depends on the IEI \cite{traub1964} and the CEI \cite{traub1964}. If  $p$ is the order of convergence and $n$ is the number of TTP, then the IEI and the CEI are calculated as follows.
\[IEI=\dfrac{p}{n} \mbox{ and } CEI=p^{1/n}.\] Consider the $19^{\rm th}$-order by setting $p=19$ in \eqref{eq:itrhpip} that is,
\begin{equation}\label{eq:19hpi}
\mc{Z}_{j+1}=\mc{Z}_{j}\m\left(\mc{I}+\mc{R}_{j}+\mc{R}_{j}^{2}+\cdots+\mc{R}_{j}^{18}\right).
\end{equation}
 We denote this scheme as $M$-HPI19, which requires $19$ TTP at each iteration step of the scheme, and hence, the computational cost becomes high. Our target is to rewrite \eqref{eq:19hpi} such that the number of  TTP decreases while the order of convergence remains unaltered. Keeping this in mind, we can factorize \eqref{eq:19hpi} as 
\begin{equation}\label{eq:19hpife}
\mc{Z}_{j+1}=\mc{Z}_{j}\m \left(\mc{I}+\left(\mc{R}_{j}+\mc{R}_{j}^{2}\right)\m  \Lambda_{j}\right),
\end{equation}
where $\Lambda_{j}=\mc{I}+\mc{R}_{j}^{2}+\mc{R}_{j}^{4}+\mc{R}_{j}^{6}+\mc{R}_{j}^{8}+\mc{R}_{j}^{10}+\mc{R}_{j}^{12}+\mc{R}_{j}^{14}+\mc{R}_{j}^{16}$. 
Compared to the HPI method \eqref{eq:19hpi}, the factorized form 
\eqref{eq:19hpife} will have the same order of convergence, while the number of TTP is less. Further, we can simplify \eqref{eq:19hpife} by factoring $\Lambda_{j}$ in a recursive manner, which is described below.  
\begin{itemize}
    \item[Step I:] Rewrite the degree $16$ expression $\Lambda_{j}$ as product of two $8$ degree expressions as
    \begin{eqnarray*}
    \Lambda_{j}&= & \left(\mc{I}+\alpha_{1} \mc{R}_{j}^{2}+\alpha_{2} \mc{R}_{j}^{4}+\alpha_{3} \mc{R}_{j}^{6}+\mc{R}_{j}^{8}\right)  \m \left(\mc{I}+\beta_{1} \mc{R}_{j}^{2}+\beta_{2} \mc{R}_{j}^{4}+\beta_{3} \mc{R}_{j}^{6}+\mc{R}_{j}^{8}\right)+\left(\zeta_{1} \mc{R}_{j}^{2}+\zeta_{2} \mc{R}_{j}^{4}\right).
\end{eqnarray*}
 Comparing the coefficients of the equal powers of $\mc{R}_{j}$ on both sides, we obtain the following nonlinear system equations:
\begin{eqnarray} \nonumber\label{eq:svn}
&&\alpha_{1}  + \beta_{1}  + \zeta_{1} =1,\\\nonumber
&&\alpha_{2}  + \alpha_{1} \beta_{1}  + \beta_{2}  + \zeta_{2} =1,\\\nonumber
&&\alpha_{3}  +  \alpha_{2} \beta_{1}  + \alpha_{1} \beta_{2}  + \beta_{3} =1,\\
&&2  + \alpha_{3} \beta_{1}  + \alpha_{2} \beta_{2}  + \alpha_{1} \beta_{3} =1,\\ \nonumber
&&\alpha_{1}  + \beta_{1}  + \alpha_{3} \beta_{2}  + \alpha_{2} \beta_{3} =1,\\\nonumber
&&\alpha_{2}  + \beta_{2}  + \alpha_{3} \beta_{3} =1,\\ \nonumber
&& \alpha_{3}  + \beta_{3}=1.
\end{eqnarray}
Assuming $\alpha_{3}=\beta_{3}=\frac{1}{2}$ and then solving  \eqref{eq:svn}, we get
\[\alpha_{1}=\dfrac{5}{496}(31+\sqrt{93}), \alpha_{2}=\dfrac{1}{8}(3+\sqrt{93}), ~~\beta_{1}=-\dfrac{5}{496}(\sqrt{93}-31),\]
\[\beta_{2}=\dfrac{1}{8}(3-\sqrt{93}),~~ \zeta_{1}=\dfrac{3}{8},~~ \zeta_{2}=\dfrac{321}{1984}.\]
\item[Step II:] Let $\mc{V}_{j}=\mc{I}+\alpha_{1} \mc{R}_{j}^{2}+\alpha_{2} \mc{R}_{j}^{4}+\alpha_{3} \mc{R}_{j}^{6}+\mc{R}_{j}^{8}$ . Next, we factorize $\mc{V}_{j}$ as follows to reduce the number of tensor tensor products.
    \begin{equation*}
    \mc{V}_{j}=\left(1+\tau_{1} \mc{R}_{j}^{2}+\mc{R}_{j}^{4}\right)\m \left(1+\tau_{2} \mc{R}_{j}^{2}+\mc{R}_{j}^{4}\right)+\tau_3 \mc{R}_{j}^{2}.
\end{equation*}
On comparing the similar power of $\mc{R}_{j}$ on both sides, we obtain
\begin{equation}\label{eq:gma}
   \tau_{1} + \tau_{2}  + \tau_{3} =\alpha_{1},~ 2  + \tau_{1} \tau_{2} =\alpha_{2},~\tau_{1}  + \tau_{2} =\alpha_{3}.
   \end{equation}
On solving \eqref{eq:gma} for $\tau_{1},~\tau_{2}$ and $\tau_{3}$, yields
\[\tau_{1}=\frac{1}{4}\left(\sqrt{27-2 \sqrt{93}}+1\right),~ \tau_{2}=\frac{1}{4}\left(1-\sqrt{27-2 \sqrt{93}}\right), ~
    \tau_3=\frac{1}{496}\left(5 \sqrt{93}-93\right).
\]
    \item[Step III:] Assume  $\mc{W}_{j}=\mc{I}+\beta_{1} \mc{R}_{j}^{2}+\beta_{2} \mc{R}_{j}^{4}+\beta_{3} \mc{R}_{j}^{6}+\mc{R}_{j}^{8}$.  In addition, we factorize $\mc{W}_{j}$ as given below.
\[\mc{W}_{j}=\left(1+\tau_{1} \mc{R}_{j}^{2}+\mc{R}_{j}^{4}\right)\m \left(1+\tau_{2} \mc{R}_{j}^{2}+\mc{R}_{j}^{4}\right)+\xi_{1} \mc{R}_{j}^{2}+\xi_{2} \mc{R}_{j}^{4}.\]
Again comparing the similar power of $\mc{R}_{j}$ on both sides, we have
\begin{equation}\label{eq:beta}
 \tau_{1} + \tau_{2}  + \xi_{1}=\beta_{1}, ~
2+ \tau_{1} \tau_{2}  + \xi_{2}=\beta_{2},~
\tau_{1} + \tau_{2}=\beta_{3}.
   \end{equation}
Solving Equation \eqref{eq:beta} for $\xi_{1}$ and $\xi_{2}$, resulted $\xi_{1}=\frac{1}{496}(-93-5 \sqrt{93})$, and $ \xi_{2}=-\frac{\sqrt{93}}{4}$.
\end{itemize}
The advantage of the above factorization is that we require only seven tensor tensor products per loop instead of $19$ with $19^{th}$-order of convergence. We summarize the $M$-HPI19 method as follows.
\begin{eqnarray} \label{eq:tenhpi19}
\nonumber
&&\mc{R}_j =\mc{I}-\mc{A}\m\mc{Z}_j,\\
\nonumber
&&\mc{U}_j=\left(\mc{I}+\tau_{1}\mc{R}^{2}_j+\mc{R}^{4}_j\right)\m\left(\mc{I}+\tau_{2}\mc{R}^{2}_j+\mc{R}^{4}_j\right),\\ 
&&\mc{V}_j=\mc{U}_j+\tau_3 \mc{R}^{2}_j,\\ 
\nonumber
&&\mc{W}_j=\mc{U}_j+\xi_{1} \mc{R}^{2}_j+\xi_{2} \mc{R}^{4}_j,\\ 
\nonumber
&&\mc{Z}_{j+1}=\mc{Z}_j\m \left(\mc{I}+\left(\mc{R}_j+\mc{R}^{2}_j\right)\m\left(\mc{V}_j\m \mc{W}_j+\zeta_{1} \mc{R}^{2}_j+\zeta_{2} \mc{R}^{4}_j\right)\right).
\end{eqnarray}
In the case of $p=9$, we denote it $M$-HPI9, which requires five TTP, and the method can be formulated as follows. 
\begin{eqnarray*} 
&&\mc{R}_j =\mc{I}-\mc{A}\m\mc{Z}_j,\\
&&\mc{U}_j=\frac{7}{8}\mc{R}_j+\mc{R}^{2}_j\m\left(\frac{1}{2}\mc{R}_j+\mc{R}^{2}_j\right),\\
&&\mc{V}_j=\frac{11}{16}\mc{I}-\frac{9}{8}\mc{R}_j+\frac{3}{4}\mc{R}^{2}_j+\mc{U}_j,\\ 
&&\mc{Z}_{j+1}=\mc{Z}_j\m \left(\mc{I}+\frac{51}{128}\mc{R}_j+\frac{39}{32}\mc{R}^{2}_j+\mc{U}_j\m\mc{V}_j\right).
\end{eqnarray*}
The comparison of CEI and IEI for different order convergence is presented in Figure \ref{fig:Fig3}. It is also observed that the CEI (1.552) for $M$-HPI9 is slightly better than the CEI (1.523) of $M$-HPI19. However, the IEI of $M$-HPI19 is much better than the IEI of $M$-HPI9. So  $M$-HPI19 keeps the balance between both efficiencies with less computational burden.  
\begin{figure}[H]
\centering
\subfigure{\includegraphics[scale=0.54]{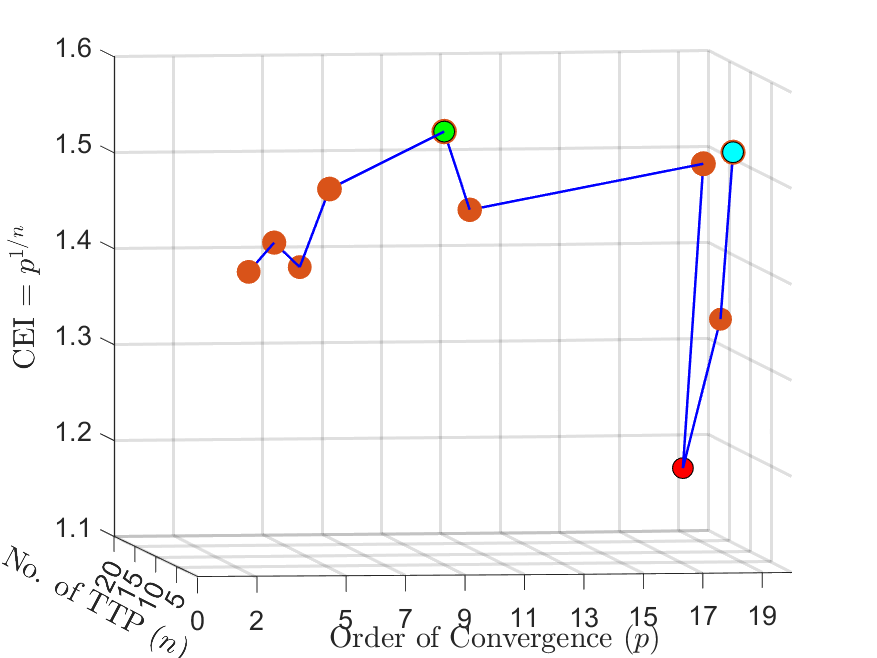}}
\subfigure{\includegraphics[scale=0.54]{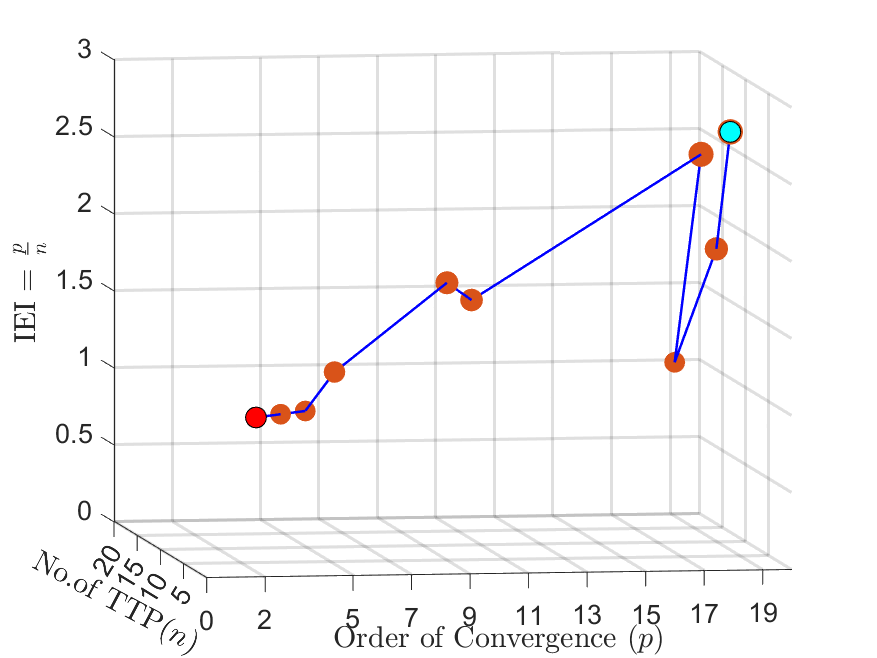}}
\caption{Comparison of CEI and IEI for different orders of convergence.}
\label{fig:Fig3}
\end{figure}
Next, we discuss the convergence of $M$-HPI19 under a suitable initial approximation. More details on choosing the initial approximation can be found in \cite{ing1,ing2}.
\begin{lemma}\label{lem-hpi19}
  Let $\mc{A} \in \mb{F}^{m\times n\times p}$ and $\mc{W} \in  \mb{F}^{n\times m\times p}$ such that $\rank(\mc{W} * \mc{A})=\rank(\mc{W})\leq \rank(\mc{A})$. Assume that $\mc{X}=\mc{A}^{(2)}_{\rg(\mc{W}),\nl(\mc{W})}$. Then 
       \begin{center}
        $\mc{Z}\m\mc{A}\m\mc{X}=\mc{Z}$ and  $\mc{X}\m\mc{A}\m\mc{Z}=\mc{Z}$ if and only $\rg(\mc{Z})\subseteq \rg(\mc{W})$ and $\nl(\mc{W})\subseteq \nl(\mc{Z})$.   
       \end{center}   
\end{lemma}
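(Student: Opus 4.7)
The plan is to reduce the tensor statement to its matrix analogue, Lemma \ref{lem2.15-rn}, by applying the isomorphism $\mat$ and exploiting the way it interacts with $*_M$, $\rg$, $\nl$, and $\rank$. Concretely, I will set $A=\mat(\mc{A})$, $W=\mat(\mc{W})$, $X=\mat(\mc{X})$, and $Z=\mat(\mc{Z})$, and translate each hypothesis and each of the four implications to the matrix level, apply Lemma \ref{lem2.15-rn} on the block-diagonal matrices in $\mb{F}^{mp\times np}_M$, and then pull everything back via $\mat^{-1}$.

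First I would verify that the hypotheses transfer cleanly. By Proposition~2.1(b), $\mat(\mc{W}*_M\mc{A})=WA$, and by definition $\rank(\mc{T})=\textup{rank}(\mat(\mc{T}))$, so $\rank(\mc{W}*_M\mc{A})=\rank(\mc{W})\le\rank(\mc{A})$ is exactly $\textup{rank}(WA)=\textup{rank}(W)\le\textup{rank}(A)$. Next, because $\rg$ and $\nl$ are defined through $\mat^{-1}$ of the corresponding matrix subspaces, the assumption $\mc{X}=\mc{A}^{=}_{\rg(\mc{W}),\nl(\mc{W})}$ is equivalent to $X=A^{=}_{R(W),N(W)}$: indeed $\mc{X}\in\mc{A}\{2\}$ translates to $XAX=X$ via Proposition~2.1(b), and $\rg(\mc{X})=\rg(\mc{W})$, $\nl(\mc{X})=\nl(\mc{W})$ translate to $R(X)=R(W)$, $N(X)=N(W)$ by applying $\mat$ to the defining set equalities.

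Then I would translate the biconditional. The two equations $\mc{Z}\m\mc{A}\m\mc{X}=\mc{Z}$ and $\mc{X}\m\mc{A}\m\mc{Z}=\mc{Z}$ become $ZAX=Z$ and $XAZ=Z$ after applying $\mat$ (again Proposition~2.1(b) together with the fact that $\mat$ is injective on $\mb{F}^{mp\times np}_M$). Similarly, $\rg(\mc{Z})\subseteq\rg(\mc{W})$ and $\nl(\mc{W})\subseteq\nl(\mc{Z})$ are equivalent to $R(Z)\subseteq R(W)$ and $N(W)\subseteq N(Z)$ by definition of $\rg$ and $\nl$. With all four conditions lifted to the matrix side, I invoke Lemma~\ref{lem2.15-rn} (applied to the block-diagonal pair $A,W\in\mb{F}^{mp\times np}_M$), which yields the matrix biconditional; applying $\mat^{-1}$ to the matrix identities and the defining subspace sets gives back the tensor conclusion.

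The argument is essentially a bookkeeping exercise, so the only real obstacle is making sure that the subspace translation is faithful in both directions, i.e., $\rg(\mc{Z})\subseteq\rg(\mc{W})\iff R(Z)\subseteq R(W)$ and analogously for $\nl$. This follows because $\mat$ and $\mat^{-1}$ are mutually inverse linear bijections between $\mb{F}^{m\times n\times p}$ and $\mb{F}^{mp\times np}_M$ (hence preserve and reflect containments of linear subspaces), but it is the one place where one must be careful to invoke linearity of $\mat^{-1}$ (Proposition after the $\mat$ definition) rather than applying it pointwise.
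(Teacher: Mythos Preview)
Your proposal is correct and follows essentially the same approach as the paper: apply $\mat$ to reduce both the hypotheses and the biconditional to the matrix setting, invoke Lemma~\ref{lem2.15-rn}, and translate back via $\mat^{-1}$. The paper carries out the subspace-containment translation pointwise (taking an element $Y\in R(\mat(\mc{Z}))$ and showing it lies in $R(\mat(\mc{W}))$, etc.), whereas you phrase it once and for all via the bijectivity and linearity of $\mat$, but the content is the same.
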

\begin{proof}
    Let $\mc{X}=\mc{A}^{(2)}_{\rg(\mc{W}),\nl(\mc{W})}$,  $\mc{Z}\m\mc{A}\m\mc{X}=\mc{Z}$ and  $\mc{X}\m\mc{A}\m\mc{Z}=\mc{Z}$. Then 
    \[\mat(\mc{Z})\mat(\mc{A})\mat(\mc{X})=\mat(\mc{Z})\mbox{ and }\mat(\mc{X})\mat(\mc{A})\mat(\mc{Z})=\mat(\mc{Z}).\]
    By Lemma \ref{lem2.15-rn}, we obtain $R(\mat(\mc{Z}))\subseteq R(\mat(\mc{W}))$ and $N(\mat(\mc{W}))\subseteq N(\mat(\mc{Z}))$. Now if $\mc{T}\in \rg(\mc{Z})$ then $\mc{T}=\mat^{-1}(Y)$, where $Y\in R(\mat(\mc{Z}))\subseteq R(\mat(\mc{W}))$. Therefore, $\mc{T}\in \rg(\mc{W})$ and subsequently, $\rg(\mc{Z})\subseteq \rg(\mc{W})$. Similarly, we can show the null condition $\nl(\mc{W})\subseteq \rg(\mc{Z})$.
    
    Conversely, let $\rg(\mc{Z})\subseteq \rg(\mc{W})$ and $\nl(\mc{W})\subseteq \nl(\mc{Z})$. If $Y\in R(\mat(\mc{Z}))$. Then $\mat^{-1}(Y)\in \rg(\mc{Z})\subseteq \rg(\mc{W})$. Thus $Y\in R(\mat(\mc{W}))$ and hence $R(\mat(\mc{Z}))\subseteq R(\mat(\mc{W}))$. Similarly, we can show $N(\mat(\mc{W}))\subseteq N(\mat(\mc{Z}))$. Applying Lemma \ref{lem2.15-rn}, we have  
    \begin{equation}\label{mat-eq}
\mat(\mc{Z})\mat(\mc{A})\mat(\mc{X})=\mat(\mc{Z})\mbox{ and }\mat(\mc{X})\mat(\mc{A})\mat(\mc{Z})=\mat(\mc{Z}).    
    \end{equation}
Operating $\mat^{-1}$ to the equation \eqref{mat-eq}, we obtain 
$\mc{Z}\m\mc{A}\m\mc{X}=\mc{Z}$ and  $\mc{X}\m\mc{A}\m\mc{Z}=\mc{Z}$.
\end{proof}
\begin{theorem}\label{thm:tenhpi19}
    Let $\mc{A} \in \mb{F}^{m\times n\times p}$ and $\mc{W} \in  \mb{F}^{n\times m\times p}$ such that $\rank(\mc{W} * \mc{A})=\rank(\mc{W})\leq \rank(\mc{A})$. Let $\mc{T}=\rg(\mc{W})$ and $\mc{S}=\nl(\mc{W})$. Then the $M$-HPI19,   $\left\{\mc{Z}_{j}\right\}_{j=0}^{\infty}$ generated by equation  \eqref{eq:tenhpi19} converges to $\mc{A}_{\mc{T}, \mc{S}}^{(2)}$ with the $19^{th}$-order of convergence only if the initial approximation $\mc{Z}_0=\gamma \mc{W}$ satisfies
\begin{equation*}
    \left\|\mc{A} \m \mc{A}_{\mc{T}, \mc{S}}^{(2)}-\mc{A}\m \mc{Z}_{0}\right\|_F<1.
\end{equation*}
\end{theorem}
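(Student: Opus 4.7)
The plan is to reduce the analysis of \eqref{eq:tenhpi19} to the clean tensor recurrence $\mc{E}_{j+1}=\mc{E}_j^{19}$ for the error $\mc{E}_j:=\mc{A}\m\mc{X}-\mc{A}\m\mc{Z}_j$, where $\mc{X}:=\mc{A}_{\mc{T},\mc{S}}^{=}$, and then extract the necessary bound on $\|\mc{E}_0\|_F$ from that recurrence. First I would establish by induction on $j$ that each iterate preserves the subspace constraints $\rg(\mc{Z}_j)\subseteq\mc{T}$ and $\nl(\mc{Z}_j)\supseteq\mc{S}$. The base case follows from $\mc{Z}_0=\gamma\mc{W}$ together with $\mc{T}=\rg(\mc{W})$ and $\mc{S}=\nl(\mc{W})$; for the inductive step, \eqref{eq:tenhpi19} expresses $\mc{Z}_{j+1}$ as a right $\m$-multiple of $\mc{Z}_j$ by a polynomial in $\mc{R}_j$, which immediately gives $\rg(\mc{Z}_{j+1})\subseteq\rg(\mc{Z}_j)$, while on $\nl(\mc{Z}_j)$ the residual $\mc{R}_j$ acts as the identity, so any polynomial in $\mc{R}_j$ leaves $\nl(\mc{Z}_j)$ invariant and thus $\nl(\mc{Z}_{j+1})\supseteq\nl(\mc{Z}_j)$. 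Lemma \ref{lem-hpi19} then supplies the absorption identities $\mc{Z}_j\m\mc{A}\m\mc{X}=\mc{Z}_j$ and $\mc{X}\m\mc{A}\m\mc{Z}_j=\mc{Z}_j$ at every step.

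Next, left-multiplying these two identities by $\mc{A}$ gives $\mc{P}\m\mc{E}_j=\mc{E}_j\m\mc{P}=\mc{E}_j$ for the idempotent $\mc{P}:=\mc{A}\m\mc{X}$, equivalently $(\mc{I}-\mc{P})\m\mc{E}_j=\mc{E}_j\m(\mc{I}-\mc{P})=\mc{O}$. Writing the residual as $\mc{R}_j=(\mc{I}-\mc{P})+\mc{E}_j$ and combining this orthogonality with the idempotence of $\mc{I}-\mc{P}$, a short induction on $k$ produces $\mc{R}_j^k=(\mc{I}-\mc{P})+\mc{E}_j^k$ for every $k\geq 1$. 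Since the factored scheme \eqref{eq:tenhpi19} is algebraically identical to the unfactored HPI$19$ form \eqref{eq:19hpi}, the telescoping identity $\mc{I}-\mc{A}\m\mc{Z}_{j+1}=\mc{R}_j^{19}$ holds; matching the two expressions for $\mc{R}_{j+1}$ delivers $\mc{E}_{j+1}=\mc{E}_j^{19}$, so $\mc{E}_j=\mc{E}_0^{19^j}$ and submultiplicativity of $\|\cdot\|_F$ yields the sharp bound $\|\mc{E}_j\|_F\leq\|\mc{E}_0\|_F^{19^j}$.

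To close the necessity direction, I would argue that convergence of $\mc{Z}_j\to\mc{X}$ at $19^{\mathrm{th}}$-order rate is measured in Frobenius norm and forces $\|\mc{E}_j\|_F\to 0$; combined with the recursion $\mc{E}_j=\mc{E}_0^{19^j}$ and the single-step bound $\|\mc{E}_{j+1}\|_F\leq\|\mc{E}_j\|_F^{19}$, contraction must already be exhibited at the very first step, which is possible only when $\|\mc{E}_0\|_F<1$. The main obstacle is precisely this last passage: the algebraic recurrence $\mc{E}_{j+1}=\mc{E}_j^{19}$ by itself only supplies spectral-radius information on $\mc{E}_0$, so extracting the strictly stronger Frobenius inequality hinges on interpreting "convergence with $19^{\mathrm{th}}$ order" in the standard single-constant sense, under which the one-step submultiplicative bound $\|\mc{E}_1\|_F\leq\|\mc{E}_0\|_F^{19}$ must already force a strict contraction.
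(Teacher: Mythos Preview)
Your first two paragraphs are essentially the paper's argument: define the error $\Upsilon_j=\mc{A}\m\mc{X}-\mc{A}\m\mc{Z}_j$, establish the subspace inclusions $\rg(\mc{Z}_j)\subseteq\rg(\mc{W})$ and $\nl(\mc{W})\subseteq\nl(\mc{Z}_j)$ for all $j$, invoke Lemma~\ref{lem-hpi19} to get $(\mc{I}-\mc{P})\m\Upsilon_j=\mc{O}$, and then collapse $\mc{R}_j^{19}=[(\mc{I}-\mc{P})+\Upsilon_j]^{19}$ to obtain $\Upsilon_{j+1}=\Upsilon_j^{19}$. Your intermediate identity $\mc{R}_j^k=(\mc{I}-\mc{P})+\mc{E}_j^k$ is a slightly tidier way to organise the same computation.

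Where you diverge is the third paragraph, and here there is a genuine problem. Despite the phrase ``only if'' in the statement, the paper's own proof establishes \emph{sufficiency}: assuming $\|\Upsilon_0\|_F<1$, it bounds $\|\mc{X}-\mc{Z}_{j+1}\|_F=\|\mc{X}\m\Upsilon_{j+1}\|_F\leq\|\mc{X}\|_F\,\|\Upsilon_0\|_F^{19^{j+1}}\to 0$. It never argues necessity, and your caveat about ``spectral-radius information'' pinpoints why necessity cannot be salvaged: the recurrence $\mc{E}_j=\mc{E}_0^{19^j}$ tends to $\mc{O}$ exactly when the spectral radius of $\mc{E}_0$ is below $1$, which is strictly weaker than $\|\mc{E}_0\|_F<1$. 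For a concrete obstruction, nothing in the hypotheses rules out $\mc{E}_0$ being nilpotent of index at most $19$, in which case $\mc{E}_1=\mc{O}$ and the iteration lands on $\mc{X}$ in one step no matter how large $\|\mc{E}_0\|_F$ is; the one-step bound $\|\mc{E}_1\|_F\leq\|\mc{E}_0\|_F^{19}$ is then vacuously satisfied and yields no contraction at step zero. Your ``single-constant'' reinterpretation does not close this gap. Read the theorem as a sufficiency result and finish as the paper does, using $\mc{X}\m\mc{A}\m\mc{Z}_{j+1}=\mc{Z}_{j+1}$ from Lemma~\ref{lem-hpi19} to pass from $\Upsilon_j\to\mc{O}$ to $\mc{Z}_j\to\mc{X}$.
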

\begin{proof}
Let $\Upsilon_{j}=\mc{A} \m  \mc{A}_{\mc{T}, \mc{S}}^{(2)}-\mc{A} \m \mc{Z}_{j}$. Then 
\begin{eqnarray*}
\Upsilon_{j+1}& =&\mc{A} \m  \mc{A}_{\mc{T},\mc{S}}^{(2)}-\mc{A}\m \mc{Z}_{j+1}\\
& =&\mc{A} \m  \mc{A}_{\mc{T},\mc{S}}^{(2)}-\mc{A}\m \mc{Z}_{j}\m \left(\mc{I}+(\mc{R}_{j}+\mc{R}_{j}^{2})\m (\mc{V}_{j}\m \mc{W}_{j}+\zeta_{1}\mc{R}_{j}^{2}+\zeta_{2}\mc{R}_{j}^{4})\right)\\
& =&\mc{A} \m  \mc{A}_{\mc{T},\mc{S}}^{(2)}\\
&&-\mc{A}\m \mc{Z}_{j}\m \left((\mc{I}+(\mc{R}_{j}+\mc{R}_{j}^{2})\m (\mc{I}+\mc{R}_{j}^{2}+\mc{R}_{j}^{4}+\mc{R}_{j}^{6}+\mc{R}_{j}^{8}+\mc{R}_{j}^{10}+\mc{R}_{j}^{12}+\mc{R}_{j}^{14}+\mc{R}_{j}^{16})\right) \\
& =&\mc{A} \m  \mc{A}_{\mc{T},\mc{S}}^{(2)}-\left(\mc{I}-\mc{R}_{j}\right)\m \left(\mc{I}+\mc{R}_{j}+\mc{R}_{j}^{2}+\mc{R}_{j}^{3}+\cdots+\mc{R}_{j}^{18}\right) \nonumber\\
& =&\mc{A}\m  \mc{A}_{\mc{T},\mc{S}}^{(2)}-\mc{I}+\mc{R}_{j}^{19} \nonumber\\
& =&\mc{A} \m  \mc{A}_{\mc{T},\mc{S}}^{(2)}-\mc{I}+\left(\mc{I}-\mc{A} \m \mc{Z}_{j}\right)^{19} \nonumber\\
& =&\mc{A}\m  \mc{A}_{\mc{T},\mc{S}}^{(2)}-\mc{I}+\left[\left(\mc{I}-\mc{A} \m  \mc{A}_{\mc{T},\mc{S}}^{(2)}\right)+\Upsilon_{j}\right]^{19}.
\end{eqnarray*}
From the definition of $\mc{A}_{\mc{T}, S}^{(2)}$, we have  
\begin{equation}\label{eqn-12}
\left(\mc{I}-\mc{A} \m  \mc{A}_{\mc{T}, S}^{(2)}\right)^{n}=\mc{I}-\mc{A} \m  \mc{A}_{\mc{T},\mc{S}}^{(2)},~n\in\mathbb{N}.    \end{equation}
Since $\mc{Z}_0=\gamma \mc{W}$, it can be easily shown that 
\[\rg(\mc{Z}_j)\subseteq \rg(\mc{W}),~  \nl(\mc{W})\subseteq \nl(\mc{Z}_j), ~j\in\mathbb{N}.\]
Thus by Lemma \ref{lem-hpi19}, we obtain  $\mc{A}_{\mc{T}, \mc{S}}^{(2)}\m\mc{A} \m \mc{Z}_{j}=\mc{Z}_{j}$.
Now 
\begin{equation}\label{eqq-13}
    \left(\mc{I}-\mc{A} \m  \mc{A}_{\mc{T}, S}^{(2)}\right)\m \Upsilon_{j}=\mc{A} \m \mc{Z}_{j}-\mc{A}\m\mc{A}_{\mc{T}, \mc{S}}^{(2)}\m\mc{A} \m \mc{Z}_{j}=\mc{A} \m \mc{Z}_{j}-\mc{A} \m \mc{Z}_{j}=\mc{O}.
\end{equation}

Substituting equations \eqref{eqn-12} and \eqref{eqq-13} in $\Upsilon_{j+1}$, we get
\begin{equation*}
    \Upsilon_{j+1}=\mc{A} \m  \mc{A}_{\mc{T}, S}^{(2)}-\mc{I}+\mc{I}-\mc{A} \m  \mc{A}_{\mc{T}, S}^{(2)}+\Upsilon_{j}^{19}=\Upsilon_{j}^{19}.
\end{equation*}

Let $\Psi_{j}=\mc{A}_{\mc{T}, \mc{S}}^{(2)}-\mc{Z}_{j}$. Then 
$\mc{A}\m\Psi_{j+1}=\Upsilon_{j+1}=\Upsilon_{j}^{19}=\left(\mc{A}\m\Psi_{j}\right)^{19}$,  and by Lemma \ref{lem-hpi19} we have 
\begin{eqnarray*}
\|\Psi_{j+1}\|_F 
& =&\|\mc{A}_{\mc{T}, \mc{S}}^{(2)}-\mc{Z}_{j+1}\|_F=\|\mc{A}_{\mc{T}, \mc{S}}^{(2)}\m \mc{A} \m \left(\mc{A}_{\mc{T}, \mc{S}}^{(2)}-\mc{Z}_{j+1}\right)\|_F=\|\mc{A}_{\mc{T}, \mc{S}}^{(2)}\m\mc{A}\m\Psi_{j+1}\|_F\\
&=&\|\mc{A}_{\mc{T}, \mc{S}}^{(2)}\m(\mc{A}\m\Psi_{j})^{19}\|_F\leq \|\mc{A}_{\mc{T}, \mc{S}}^{(2)}\|_F\|\mc{A}\|_F^{19}\|\Psi_{j}\|_F^{19},
\end{eqnarray*}
and 
\[
\left\|\Psi_{j+1}\right\|_F=\left\|\mc{A}_{\mc{T}, \mc{S}}^{(2)}\m \Upsilon_j^{19}\right\|_F\leq\left\|\mc{A}_{\mc{T}, \mc{S}}^{(2)}\right\|_F\left\|\Upsilon_j\right\|_F^{19} \leq\left\|\mc{A}_{\mc{T}, \mc{S}}^{(2)}\right\|_F\left\| \Upsilon_{j-1}\right\|_F^{19^2} \leq \cdots \leq\left\| \mc{A}_{\mc{T}, \mc{S}}^{(2)}\right\|_F\left\|\Upsilon_0\right\|_F^{19^{j+1}}.  
\]
 Since $\|\Upsilon_{0}\|_F<1$, so $\Psi_{j+1}\to 0$ as $j\to\infty$ and completes the proof. 
\end{proof}

The following corollary immediately follows from Theorem \ref{thm-out} and Theorem \ref{alg:MHPI}.
\begin{corollary}\label{cor-4-2}
    Let the condition Theorem \ref{thm:tenhpi19} be satisfied and the sequence $\{\mc{Z}_{j}\}$ obtained from equation \eqref{eq:tenhpi19}. Then 
\begin{equation*}
    \lim _{j \to \infty} \mc{Z}_{j} = 
    \begin{cases}
    \mc{A}^{\dagger}, & \mc{Z}_{0}=\alpha \mc{A}^{*}.\\ 
    \mc{A}^{D}, & \mc{Z}_{0}=\alpha \mc{A}^{k},~ k\geq ind(\mc{A}).
    \end{cases}
\end{equation*}
\end{corollary}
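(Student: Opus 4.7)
The plan is to reduce the corollary directly to Theorem \ref{thm:tenhpi19} by choosing the tensor $\mc{W}$ appropriately in each case and then identifying the limit via Theorem \ref{thm-out}. Concretely, for part (a) I would set $\mc{W} = \mc{A}^*$ so that $\mc{Z}_0 = \alpha\mc{A}^* = \alpha\mc{W}$ fits the form required by Theorem \ref{thm:tenhpi19}, and for part (b) I would set $\mc{W} = \mc{A}^k$ with $k \geq \text{ind}(\mc{A})$ so that $\mc{Z}_0 = \alpha\mc{A}^k = \alpha\mc{W}$. Once the hypothesis on $\mc{W}$ is verified, Theorem \ref{thm:tenhpi19} gives convergence of $\{\mc{Z}_j\}$ to $\mc{A}^{=}_{\rg(\mc{W}),\nl(\mc{W})}$, and Theorem \ref{thm-out} identifies this outer inverse as $\mc{A}^\dg$ in case (a) and as $\mc{A}^D$ in case (b).

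The first step in each case is to verify the rank condition $\rank(\mc{W}\m\mc{A}) = \rank(\mc{W}) \leq \rank(\mc{A})$ required by Theorem \ref{thm:tenhpi19}. For (a), I would work through $\mat$: since $\mat(\mc{A}^*\m\mc{A}) = \mat(\mc{A})^*\mat(\mc{A})$ and $\mathrm{rank}(A^*A) = \mathrm{rank}(A^*) = \mathrm{rank}(A)$ block-wise on the $p$ diagonal blocks, we obtain $\rank(\mc{A}^*\m\mc{A}) = \rank(\mc{A}^*) = \rank(\mc{A})$. For (b), the definition of $\ind(\mc{A})$ gives $\rank(\mc{A}^{k+1}) = \rank(\mc{A}^k)$ for $k \geq \ind(\mc{A})$, and clearly $\rank(\mc{A}^k) \leq \rank(\mc{A})$.

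Next I would verify the contraction condition $\|\mc{A}\m\mc{A}^{=}_{\mc{T},\mc{S}} - \mc{A}\m\mc{Z}_0\|_F < 1$ of Theorem \ref{thm:tenhpi19} by the standard choice of $\alpha$. Writing $\mc{P} = \mc{A}\m\mc{A}^{=}_{\mc{T},\mc{S}}$, which is an idempotent under $\m$, one picks $\alpha$ so that $\|\mc{P} - \alpha\mc{A}\m\mc{W}\|_F < 1$; this is possible because $\mc{A}\m\mc{W}$ has spectrum (in the $\mat$ domain) lying in a half-plane that permits such a scaling, exactly as in the matrix setting referenced through \cite{ing1,ing2}. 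With this initial approximation, Theorem \ref{thm:tenhpi19} yields $\mc{Z}_j \to \mc{A}^{=}_{\rg(\mc{W}),\nl(\mc{W})}$ with order $19$.

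Finally, applying Theorem \ref{thm-out}(a) with $\mc{W} = \mc{A}^*$ gives $\mc{A}^{=}_{\rg(\mc{A}^*),\nl(\mc{A}^*)} = \mc{A}^\dg$, establishing part (a); applying Theorem \ref{thm-out}(b) with $\mc{W} = \mc{A}^k$ gives $\mc{A}^{=}_{\rg(\mc{A}^k),\nl(\mc{A}^k)} = \mc{A}^D$, establishing part (b). I do not expect a serious obstacle here: the only nontrivial check is the rank hypothesis, which is handled via the block-diagonal structure of $\mat$, and the identification of the limit is a direct application of the two earlier theorems. The main bookkeeping point is to confirm that $\alpha$ can indeed be chosen so the initial residual is strictly less than one in Frobenius norm, but this reduces to the well-known matrix fact applied slice-by-slice in the transformed domain.
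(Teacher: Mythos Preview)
Your proposal is correct and follows essentially the same route as the paper, which simply states that the corollary follows immediately from Theorem~\ref{thm-out} and Theorem~\ref{thm:tenhpi19}; you have merely filled in the verification of the rank hypothesis and the identification step that the paper leaves implicit. One small remark: Theorem~\ref{thm-out}(b) is stated for $k=\ind(\mc{A})$, so when $k>\ind(\mc{A})$ you should note that $\rg(\mc{A}^k)=\rg(\mc{A}^{\ind(\mc{A})})$ and $\nl(\mc{A}^k)=\nl(\mc{A}^{\ind(\mc{A})})$ (an immediate consequence of the index definition applied block-wise via $\mat$) before invoking it.
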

{\small{
\begin{algorithm}[H]
 \caption{Computation of outer inverse by $M$-HPI19} \label{alg:MHPI}
\begin{algorithmic}[1]
\Procedure{$M$-HPI19}{$\mc{A},\mc{Z}_0,\epsilon$}
\State {\bf Input} $\mc{A} \in \mb{F}^{m \times n\times p}$ and an invertible matrix $M\in\mb{F}^{p\times p}$, Tolerance $\epsilon$
\State {\bf Choose} $\mc{W}$, where $\mc{W}\in \mb{R}^{n \times m\times p}$ such that $\rank(\mc{W}\m \mc{A})=\rank(\mc{W})\leq \rank(\mc{A})$
\State Initial guess $\mc{Z}_0$
\While{ (true)} 
\State $\mc{R}=\mc{I}-\mc{A}\m  \mc{Z}_0$
\State $\mc{R}^{2}=\mc{R}\m \mc{R}$
\State $\mc{R}^{4}=\mc{R}^{2}\m \mc{R}^{2}$
\State $\mc{U}=\left(\mc{I}+\tau_{1}\mc{R}^{2}+\mc{R}^{4}\right)\m\left(\mc{I}+\tau_{2}\mc{R}^{2}+\mc{R}^{4}\right)$ 
\State $\mc{V}=\mc{U}+\tau_3 \mc{R}^{2}$ 
\State $\mc{W}=\mc{U}+\xi_{1} \mc{R}^{2}+\xi_{2} \mc{R}^{4}$
\State $\mc{Z}=\mc{Z}_0\m \left(\mc{I}+\left(\mc{R}+\mc{R}^{2}\right)\m\left(\mc{V}\m \mc{W}+\zeta_{1} \mc{R}^{2}+\zeta_{2} \mc{R}^{4}\right)\right)$
\If{($\|\mc{Z}-\mc{Z}_0\|_F< \epsilon$)}
\Break
\EndIf
\State $\mc{Z}_0=\mc{Z}$
\EndWhile
\State \Return $\mc{Z}$
\EndProcedure
 \end{algorithmic}
\end{algorithm}
}}
\section{Numerical examples}
In this manuscript, all the numerical examples are computed using MATLAB, R2023b, which runs on a Mac Pro with a CPU [12-Core Intel Xeon W   3.3 GHz], 96GB RAM, and  Ventura OS.

 \subsection{Examples based on $M$-QR decomposition}
The following notations are used for errors associated with different matrix and tensor equations. In these notations, $M$ can be specified as a DFT matrix, $M_1$ matrix, and any random invertible matrix. For example, $\mc{E}_1^{DFT}$ means the norm evaluated when $M$ is a DFT matrix. Whenever we write in terms of $M$, it is understood that the matrix $M$ is taken randomly.

{\small{
\begin{center}
{\small{Errors associated with matrix and tensor equations.}}
    \renewcommand{\arraystretch}{1.2}
    \begin{tabular}{lll}
    \hline
     $\mc{E}^{M}_{1} = \|\mc{A}-\mc{A}*_M\mc{X}*_M\mc{A}\|_F$  & $\mc{E}^M_{2} = \|\mc{X}-\mc{X}*_M\mc{A}*_M\mc{X}\|_F$ & $\mc{E}^M_{3} =\|\mc{A}*_M\mc{X}-(\mc{A}*_M\mc{X})^T\|_F$ \\  
    $\mc{E}^M_{4} =\|\mc{X}*_M\mc{A}-(\mc{X}*_M\mc{A})^T\|_F$&$\mc{E}^{M}_{5} = \|\mc{A}*_M\mc{X}-\mc{X}*_M\mc{A}\|_F$
       &$\mc{E}^{M}_{1^k} = \|\mc{X}*_M\mc{A}^{k+1} -\mc{A}^k\|_F$\\
       ${E}_{1} = \|{A}-{A}{X}{A}\|_F$  &${E}_{2} = \|{X}-{X}{A}{X}\|_F$&${E}_{3} =\|{A}{X}-({A}{X})^{T}\|_F$ \\
       ${E}_{4} =\|{X}{A}-({X}{A})^T\|_F$&${E}_{5} = \|{A}{X}-XA\|_F$
       &${E}_{1^k} = \|XA^{k+1} -{A}^k\|_F$ \\
     \hline
    \end{tabular}     
    \end{center}
}}
Next, we discuss the comparison analysis regarding mean CPU time (MT$^M$) and errors (Error$^M$) associated with computing the Moore-Penrose inverse by $M$-QR decomposition. In the first test tensor, we have considered the frontal slices as the $chow$ matrices of order $n$. For different choices of $n$ and $M$,  MT$^M$, and Error$^M$  are provided in Table \ref{tab:error-chow}. In addition, the mean CPU time against the order for the same test tensor is provided in Figure \ref{fig:comp-MQR} (a). Another test tensor was generated by considering the frontal slices as the $cycol$ matrix of order $n$, for which the mean CPU time against the order is plotted in Figure \ref{fig:comp-MQR} (b).

  \begin{table}[H]
    \begin{center}
       \caption{ Comparison analysis for computing the Moore-Penrose inverse for different choices of $M$.}
       \vspace{0.2cm}
          \renewcommand{\arraystretch}{1.2}
    \begin{tabular}{ccccccc}
    \hline
        Size of $\mc{A}$  & MT$^{DFT}$  & MT$^{M_1}$ & MT$^M$ & Error$^{DFT}$ & Error$^{M_1}$ &Error$^M$\\ 
           \hline
    \multirow{4}{*}{$150\times 150\times 150$} &  \multirow{4}{*}{1.83} &\multirow{4}{*}{1.01}&\multirow{4}{*}{1.07}& 
    $\mc{E}^{DFT}_{1}= 1.76e^{-11}$  & $\mc{E}^{M_1}_{1}=1.26e^{-08}$ & $\mc{E}^{M}_{1}=1.14e^{-11}$
    \\ 
& &  & & $\mc{E}^{DFT}_{2}= 1.30e^{-16}$  & $\mc{E}^{M_1}_{2}=8.69e^{-13}$ & $\mc{E}^{M}_{2}=3.00e^{-10}$
    \\  
          &  & & & $\mc{E}^{DFT}_{3}= 2.13e^{-14}$  & $\mc{E}^{M_1}_{3}=2.10e^{-11}$ & $\mc{E}^{M}_{3}=1.18e^{-10}$
    \\ 
     &  & & &$\mc{E}^{DFT}_{4}= 1.59e^{-13}$  & $\mc{E}^{M_1}_{4}=2.12e^{-11}$ & $\mc{E}^{M}_{4}=8.94e^{-10}$
    \\ 
    \hline
    \multirow{4}{*}{$350\times 350\times 350$} &  \multirow{4}{*}{13.75} &\multirow{4}{*}{7.91}&\multirow{4}{*}{7.86}& 
    $\mc{E}^{DFT}_{1}= 9.95e^{-11}$  & $\mc{E}^{M_1}_{1}=3.34e^{-07}$ & $\mc{E}^{M}_{1}=8.15e^{-11}$
    \\ 
& &  & & $\mc{E}^{DFT}_{2}= 1.65e^{-16}$  & $\mc{E}^{M_1}_{2}=2.99e^{-12}$ & $\mc{E}^{M}_{2}=3.55e^{-07}$
    \\  
          &  & & & $\mc{E}^{DFT}_{3}= 5.31e^{-14}$  & $\mc{E}^{M_1}_{3}=5.78e^{-11}$ & $\mc{E}^{M}_{3}=6.66e^{-10}$
    \\ 
     &  & & & $\mc{E}^{DFT}_{4}= 3.80e^{-13}$  & $\mc{E}^{M_1}_{4}=8.93e^{-11}$ & $\mc{E}^{M}_{4}=8.33e^{-09}$
    \\ 
    \hline
        \multirow{4}{*}{$450\times 450\times 450$} &  \multirow{4}{*}{61.29} &\multirow{4}{*}{32.33}&\multirow{4}{*}{32.22}& 
    $\mc{E}^{DFT}_{1}= 2.51e^{-10}$  & $\mc{E}^{M_1}_{1}=1.13e^{-07}$ & $\mc{E}^{M}_{1}=2.29e^{-10}$
    \\ 
& &  & & $\mc{E}^{DFT}_{2}= 1.96e^{-16}$  & $\mc{E}^{M_1}_{2}=7.20e^{-12}$ & $\mc{E}^{M}_{2}=1.71e^{-09}$
    \\  
          &  & & & $\mc{E}^{DFT}_{3}= 8.27e^{-14}$  & $\mc{E}^{M_1}_{3}=1.56e^{-10}$ & $\mc{E}^{M}_{3}=3.01e^{-10}$
    \\ 
     &  & & & $\mc{E}^{DFT}_{4}= 9.26e^{-12}$  & $\mc{E}^{M_1}_{4}=2.13e^{-10}$ & $\mc{E}^{M}_{4}=4.12e^{-10}$
    \\ 
    \hline
    \end{tabular}
       \label{tab:error-chow}  
    \end{center}
\end{table}

\begin{figure}[H]
\centering
\subfigure[Frontal slices are taken as $chow$ matrix of order $n$]{\includegraphics[scale=0.43]{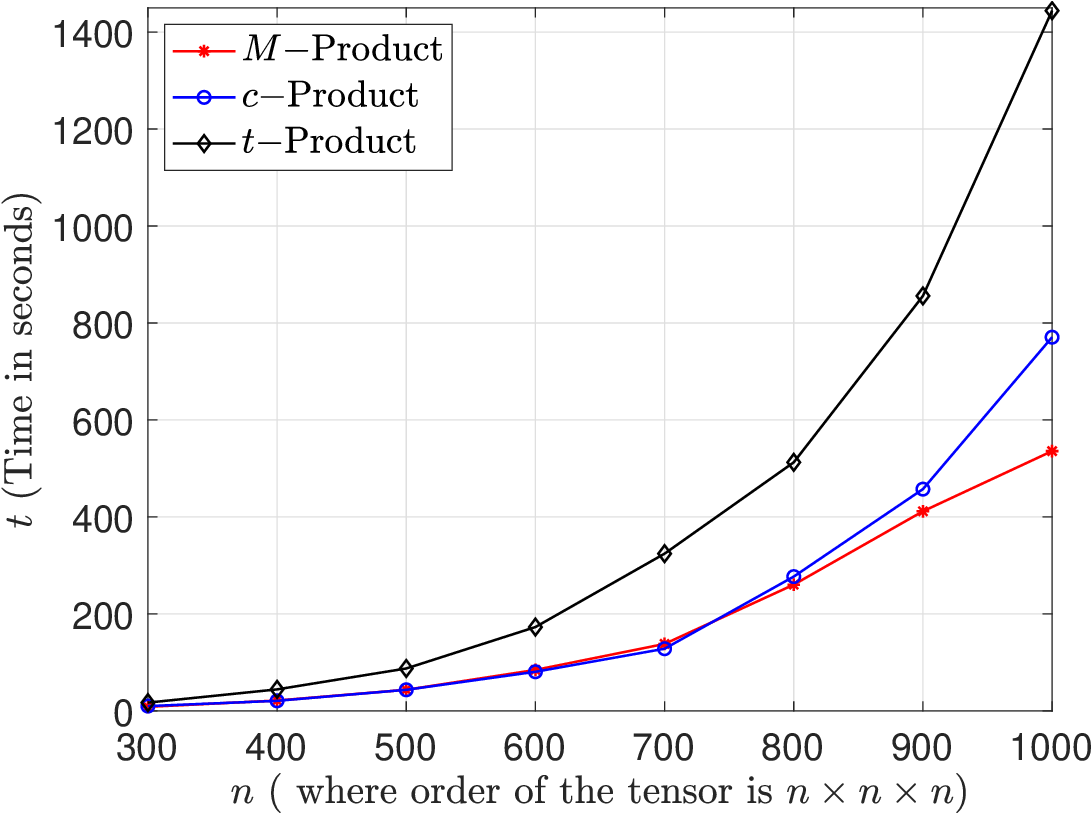}}
\subfigure[Frontal slices are taken as $cycol$ matrix of order $n$]{\includegraphics[scale=0.43]{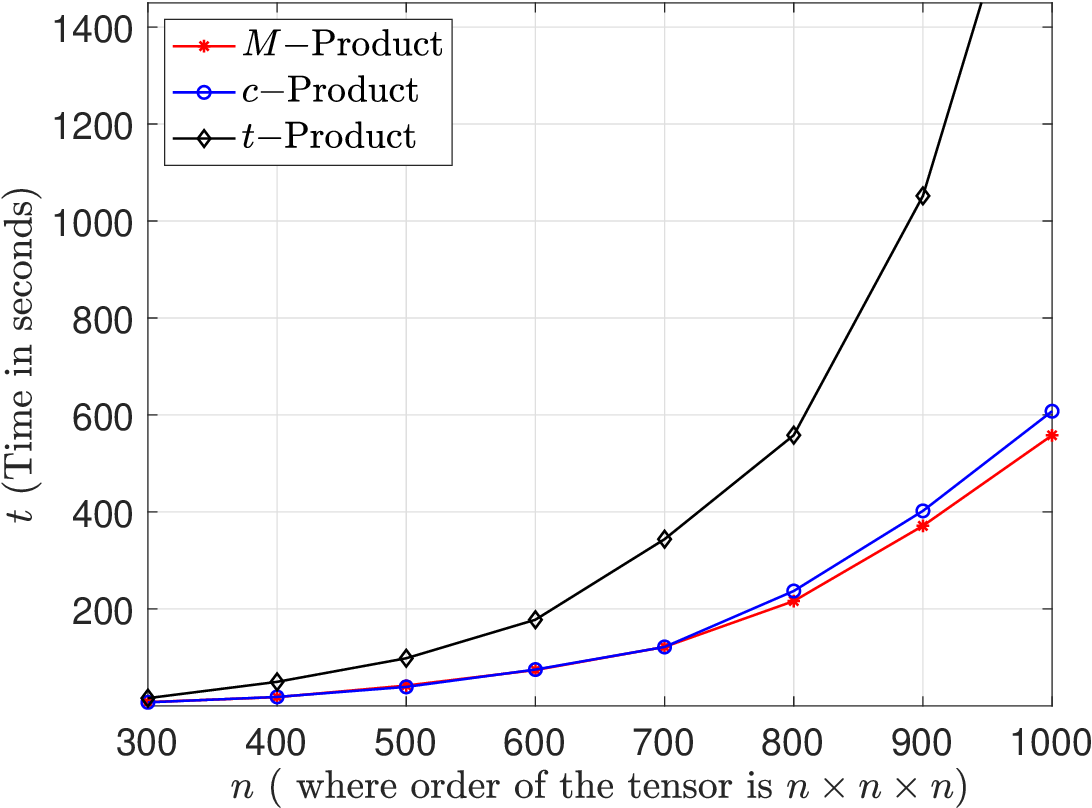}}
\caption{Comparison analysis of mean CPU time and order of tensor for computing the Moore--Penrose inverse.}
\label{fig:comp-MQR}
\end{figure}

The comparison analysis of mean CPU time (MT$^M$) and errors (Error$^M$) associated while computing the Drazin inverse by $M$-QR decomposition is presented in Table \ref{tab:error-gearmat} and Figure \ref{fig:comp-MDR}. For different choices of $n$ and $M$, and taking the frontal slices as $gearmat$, we have calculated MT$^M$ and Error$^M$, which are provided in Table \ref{tab:error-gearmat}. Further, the mean CPU time against the order for the same test tensor is provided in Figure \ref{fig:comp-MDR} (a). In addition, a few index $1$ tensors were randomly generated, and a comparison of the mean CPU times against the tensor orders is plotted in Figure \ref{fig:comp-MDR} (b).
\begin{table}[H]
    \begin{center}
       \caption{Comparison analysis for computing the Drazin inverse for different choices of $M$.}
        \vspace{0.2cm}
        \renewcommand{\arraystretch}{1.2}
    \begin{tabular}{ccccccc}
    \hline
        Size of $\mc{A}$  & MT$^{DFT}$  & MT$^{M_1}$ & MT$^M$ & Error$^{DFT}$ & Error$^{M_1}$ &Error$^M$\\
           \hline
    \multirow{4}{*}{$150\times 150\times 150$} &  \multirow{4}{*}{1.26} &\multirow{4}{*}{0.93}&\multirow{4}{*}{1.00}& 
    $\mc{E}^{DFT}_{1^2}= 3.23e^{-08}$  & $\mc{E}^{M_1}_{1^2}=1.33e^{-05}$ & $\mc{E}^{M}_{1^2}=8.10e^{-11}$
    \\ 
& &  & & $\mc{E}^{DFT}_{2}= 9.72e^{-10}$  & $\mc{E}^{M_1}_{2}=1.53e^{-10}$ & $\mc{E}^{M}_{2}=4.0e^{-13}$
    \\  
          &  & & & $\mc{E}^{DFT}_{5}= 2.26e^{-12}$  & $\mc{E}^{M_1}_{5}=7.06e^{-10}$ & $\mc{E}^{M}_{5}=1.55e^{-10}$
        \\ 
    \hline
    \multirow{4}{*}{$300\times 300\times 300$} &  \multirow{4}{*}{9.91} &\multirow{4}{*}{7.02}&\multirow{4}{*}{6.82}& 
    $\mc{E}^{DFT}_{1^2}= 8.99e^{-07}$  & $\mc{E}^{M_1}_{1^2}=6.65e^{-04}$ & $\mc{E}^{M}_{1^2}=4.72e^{-10}$
    \\ 
& &  & & $\mc{E}^{DFT}_{2}= 6.59e^{-13}$  & $\mc{E}^{M_1}_{2}=2.81e^{-09}$ & $\mc{E}^{M}_{2}=4.91e^{-09}$
    \\  
          &  & & & $\mc{E}^{DFT}_{5}= 4.91e^{-11}$  & $\mc{E}^{M_1}_{5}=8.68e^{-09}$ & $\mc{E}^{M}_{5}=5.90e^{-09}$
        \\ 
    \hline
       \multirow{4}{*}{$450\times 450\times 450$} &  \multirow{4}{*}{46.21} &\multirow{4}{*}{29.14}&\multirow{4}{*}{29.10}& 
    $\mc{E}^{DFT}_{1^2}= 2.91e^{-06}$  & $\mc{E}^{M_1}_{1^2}=2.54e^{-03}$ & $\mc{E}^{M}_{1^2}=2.90e^{-09}$
    \\ 
& &  & & $\mc{E}^{DFT}_{2}= 2.15e^{-12}$  & $\mc{E}^{M_1}_{2}=1.64e^{-09}$ & $\mc{E}^{M}_{2}=7.34e^{-10}$
    \\  
          &  & & & $\mc{E}^{DFT}_{5}= 1.52e^{-10}$  & $\mc{E}^{M_1}_{5}=6.02e^{-09}$ & $\mc{E}^{M}_{5}=7.39e^{-09}$
        \\ 
    \hline
    \end{tabular}
       \label{tab:error-gearmat}  
    \end{center}
\end{table}
\begin{figure}[H]
\centering
\subfigure[Frontal slices are $gearmat$ matrices of order $n$]{\includegraphics[scale=0.43]{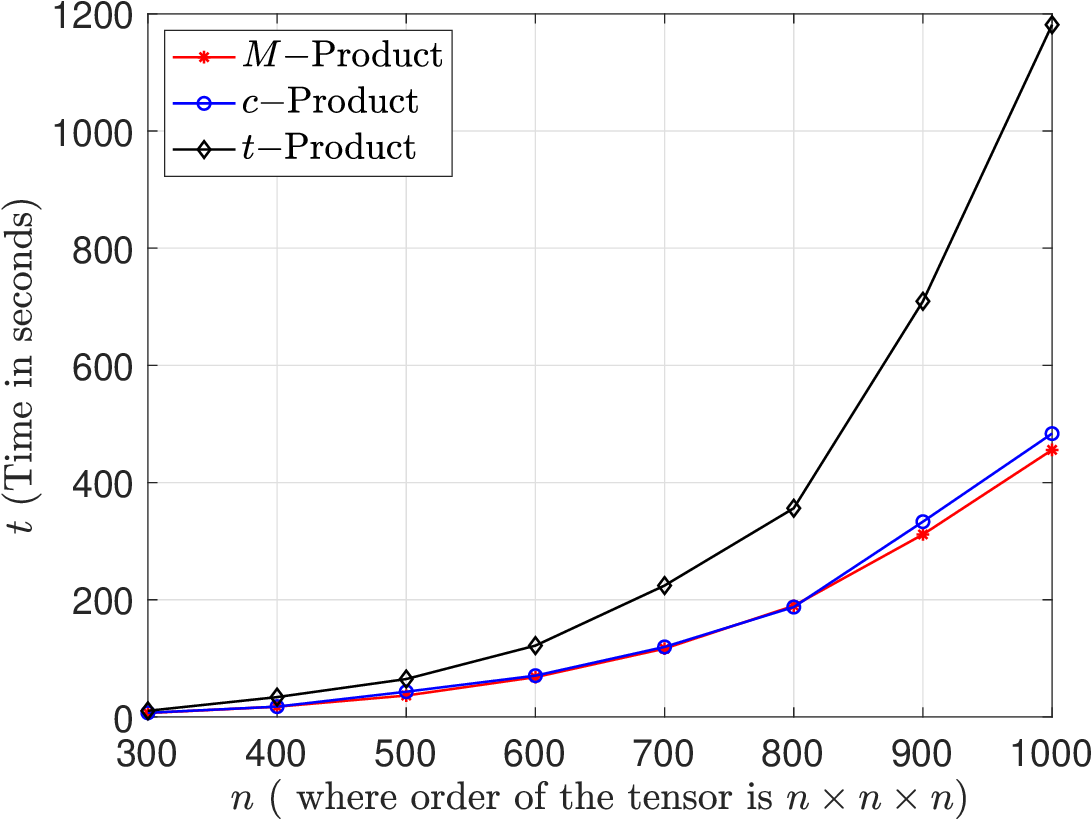}}
\subfigure[Randomly generated tensors with $ind(\mc{A})=1$]{\includegraphics[scale=0.43]{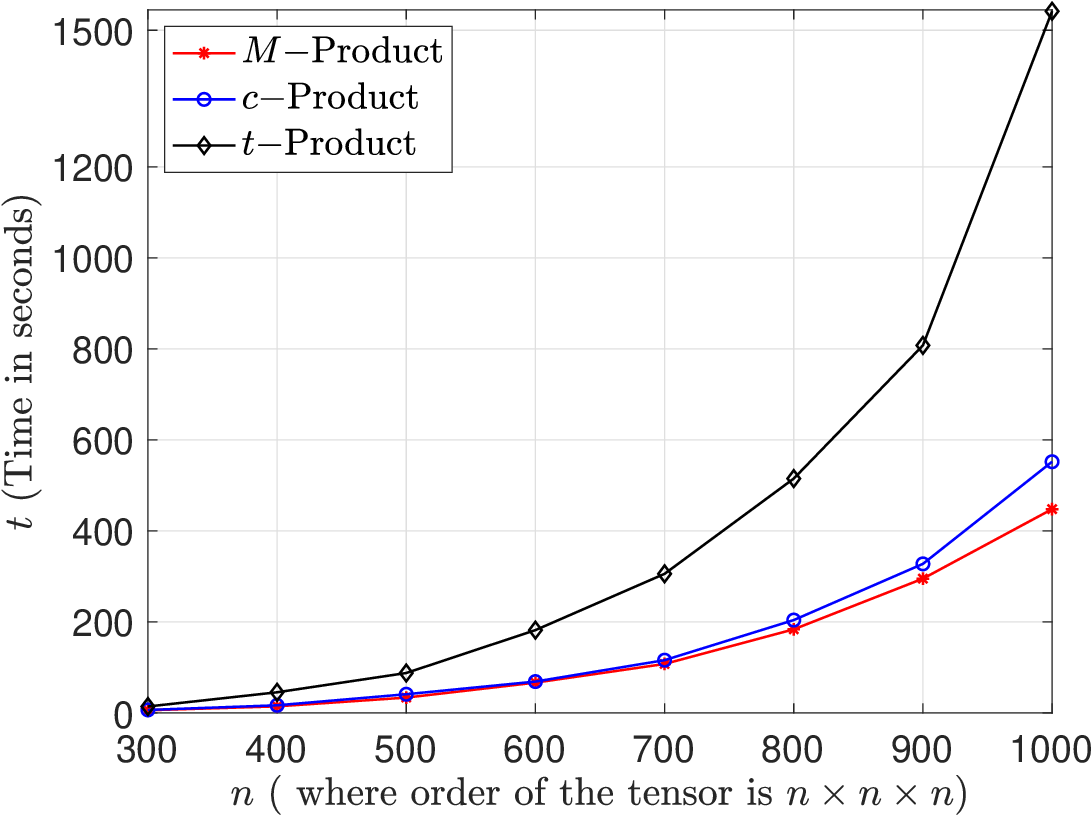}}
\caption{Comparison analysis of mean CPU time and order of tensor for computing the Drazin inverse.}
\label{fig:comp-MDR}
\end{figure}
It was observed that, in both cases, the computational efficiency is better in the $ M$ product. So, in future research, it would be interesting to see what class of invertible matrices has a low computational cost. 

\subsection{Examples based on hyperpower iterative method}
In this subsection, we have worked out a few numerical examples to study the computational efficiency of the proposed algorithm based on $M$-product. In view of corollary \ref{cor-4-2}, we compute the Moore-Penrose and Drazin inverse in the following examples.
\begin{example}\rm
Let $M=\begin{pmatrix}
1 &0 &-1 &0\\
0& 1 &0 &0\\
0& 0 & 0& 1\\
0& 1& 1& 0   
\end{pmatrix}$.   Consider a tensor  $\mc{A}\in \mb{R}^{2\times 2\times 4}$ with
    \[\mc{A}(:,:,1)=\begin{pmatrix}
        -1 & -1\\
        0 & 1
    \end{pmatrix},~\mc{A}(:,:,2)=\begin{pmatrix}
        1 & 0\\
        0 & -1
    \end{pmatrix},~\mc{A}(:,:,3)=\begin{pmatrix}
        1 & -1\\
        -1 & 0
    \end{pmatrix},~\mbox{ and }\mc{A}(:,:,4)=\begin{pmatrix}
        -1 & -1\\
        1 & 1
    \end{pmatrix}.\]
  By applying the Algorithm \ref{alg:MHPI}, we obtain  $\mc{X}=\mc{A}^{\dagger}$, where  
 \[\mc{X}(:,:,1)=\begin{pmatrix}
-7/6    &       -1/3     \\
	       1/6     &       4/3   
     \end{pmatrix},~\mc{X}(:,:,2)=\begin{pmatrix}
 1&   0\\
	  0& -1
  \end{pmatrix},~\mc{X}(:,:,3)=\begin{pmatrix}
   -2/3     &      -1/3  \\   
	      -1/3       &     1/3     
  \end{pmatrix},~\mc{X}(:,:,4)=\begin{pmatrix}
 -1/4     &       1/4     \\
	      -1/4     &       1/4     
  \end{pmatrix}.\] 
The errors, number of iterations (ITR), and mean CPU time in seconds (MT) associated with the approximated Moore-Penrose inverse are provided below.

\begin{table}[H]
    \centering
    \begin{tabular}{ccccc}
    \hline
     $\mc{Z}_0$    &  $\epsilon$ & MT & ITR & Errors \\
       \hline
          $\frac{\mc{A}^T}{\|\mc{A}\|_{F}^2}$  & $10^{-15}$&$4.375e^{-04}$ &2&$\mc{E}_1^{M}= 4.57e^{-16},~\mc{E}_2^{M} = 5.55e^{-17},~\mc{E}_3^{M} = 7.85e^{-17},~\mc{E}_4^{M}= 1.57e^{-16}$ \\
    \hline
    \end{tabular}
    \label{tab-tenmpi19-exa}
\end{table}
\end{example}

\begin{example}\rm
   Let $M=\begin{pmatrix}
   1 &-3 &0\\
   1& -3 &1\\
   1 &-1 &-1
  \end{pmatrix}$. Consider $\mc{A}\in \mb{R}^{3\times 3\times 3}$ with
  \[\mc{A}(:,:,1)=\begin{pmatrix}
     1 &-1 &-1\\
     1 & 1 &1\\
     -1 & 1 & 1
  \end{pmatrix},~\mc{A}(:,:,2)=\begin{pmatrix}
     1 &0 &0\\
     0 & 0 &0\\
    0 & 0 &0
  \end{pmatrix},~\mc{A}(:,:,3)=\begin{pmatrix}
     1 &1 &1\\
     -1 & -1 &-1\\
     0 & 0 & 0
  \end{pmatrix}.\]
Clearly $\ind(\mc{A})=1$ since $\rank(\mat(\mc{A}))=6= \rank(\mat(\mc{A}^2))$. By applying the Algorithm \ref{alg:MHPI}, we obtain  $\mc{Z}=\mc{A}^{D}$, where  
 {\small{
 \[\mc{Z}(:,:,1)=\begin{pmatrix}
  -5      &       -5/2  &         -5/2   \\  
	      29/2     &      17/2    &       17/2 \\  
	     -29/2     &     -13/2    &      -13/2    
 \end{pmatrix},~
     \mc{Z}(:,:,2)=\begin{pmatrix}
   -{3}/{2}      &     -3/4   &        -3/4  \\   
	      19/4     &      11/4   &        11/4  \\   
	     -19/4       &    -9/4&           -9/4    
  \end{pmatrix},~\mc{Z}(:,:,3)=\begin{pmatrix}
  -1/2   &         1/4      &      1/4\\     
	      -1/4      &     -1/4      &     -1/4     \\
	      -3/4        &    3/4    &        3/4     
  \end{pmatrix}.
  \]} 
  }
  The errors, number of iterations (ITR), and mean CPU time in seconds (MT) associated with the approximated Drazin inverse are provided below.
\begin{table}[H]
    \centering
    \begin{tabular}{ccccc}
    \hline
     $\mc{Z}_0$    &  $\epsilon$ & MT & ITR & Errors \\
       \hline

   $0.1624\mc{A}$ &$10^{-10}$ & $3.38e^{-05}$&3&$\mc{E}_1^{M}=3.18e^{-13},~\mc{E}_2^{M} = 2.62e^{-11},~\mc{E}_5^{M} = 7.87e^{-13}
$  \\
    \hline
    \end{tabular}
    \label{tab-tengr19-exa}
\end{table}
\end{example}
In Table \ref{tab:error-m19-m9}, we compare mean CPU time and errors associated with $M$-HPI9 and $M$-HPI19. Further comparison with two different choices of $M$ is illustrated in Figure \ref{fig:Fig4}. The results indicate that the computational time of $M$-HPI19 is faster than $M$-HPI9. Thus, in our next discussion, we only restrict our comparison analysis to $M$-HPI19.

\begin{table}[H]
    \begin{center}
          \caption{Error and mean CPU time for computing the Moore-Penrose inverse by $M$-HPI19 and $M$-HPI9.}
         \vspace*{0.2cm}
         \renewcommand{\arraystretch}{1.2}
    \begin{tabular}{ccccc}
    \hline
        Size of $\mc{A}$  & MT$^{M-\mbox{\scriptsize HPI9
        }}$  & MT$^{M-\mbox{\scriptsize HPI19}}$ & Error$^{M-\mbox{\scriptsize HPI9}}$ &Error$^{M-\mbox{\scriptsize HPI19}}$\\ 
           \hline
    \multirow{4}{*}{$150\times 150\times 150$} &  \multirow{4}{*}{1.11} &\multirow{4}{*}{1.06}& 
    $\mc{E}^{M-\mbox{\scriptsize HPI9}}_1=2.70e^{-08}$ & $\mc{E}^{M-\mbox{\scriptsize HPI19}}_1=1.31e^{-12}$
    \\ 
& &   & $\mc{E}^{M-\mbox{\scriptsize HPI9}}_{2}=4.03e^{-09}$ & $\mc{E}^{M-\mbox{\scriptsize HPI19}}_{2}=3.53e^{-07}$
    \\  
   
    & &   & $\mc{E}^{M-\mbox{\scriptsize HPI9}}_{3}=4.57e^{-12}$ & $\mc{E}^{M-\mbox{\scriptsize HPI19}}_{3}=1.16e^{-10}$
    \\  
    & &   & $\mc{E}^{M-\mbox{\scriptsize HPI9}}_{4}=1.25e^{-12}$ & $\mc{E}^{M-\mbox{\scriptsize HPI19}}_{4}=8.09e^{-11}$
    \\  
    \hline
       \multirow{4}{*}{$300\times 300\times 300$} &  \multirow{4}{*}{6.62} &\multirow{4}{*}{6.35}& 
    $\mc{E}^{M-\mbox{\scriptsize HPI9}}_1=1.78e^{-12}$ & $\mc{E}^{M-\mbox{\scriptsize HPI19}}_1=3.85e^{-12}$
    \\ 
& &   & $\mc{E}^{M-\mbox{\scriptsize HPI9}}_{2}=1.12e^{-09}$ & $\mc{E}^{M-\mbox{\scriptsize HPI19}}_{2}=2.47e^{-09}$
    \\  
   
    & &   & $\mc{E}^{M-\mbox{\scriptsize HPI9}}_{3}=5.95e^{-12}$ & $\mc{E}^{M-\mbox{\scriptsize HPI19}}_{3}=2.32e^{-11}$
    \\  
    & &   & $\mc{E}^{M-\mbox{\scriptsize HPI9}}_{4}=1.10e^{-12}$ & $\mc{E}^{M-\mbox{\scriptsize HPI19}}_{4}=5.51e^{-12}$
    \\  
    \hline
        \multirow{4}{*}{$450\times 450\times 450$} &  \multirow{4}{*}{26.85} &\multirow{4}{*}{25.07}& 
    $\mc{E}^{M-\mbox{\scriptsize HPI9}}_1=2.27e^{-12}$ & $\mc{E}^{M-\mbox{\scriptsize HPI19}}_1=1.78e^{-12}$
    \\ 
& &   & $\mc{E}^{M-\mbox{\scriptsize HPI9}}_{2}=3.08e^{-09}$ & $\mc{E}^{M-\mbox{\scriptsize HPI19}}_{2}=1.12e^{-09}$
    \\  
   
    & &   & $\mc{E}^{M-\mbox{\scriptsize HPI9}}_{3}=1.00e^{-11}$ & $\mc{E}^{M-\mbox{\scriptsize HPI19}}_{3}=1.59e^{-11}$
    \\  
    & &   & $\mc{E}^{M-\mbox{\scriptsize HPI9}}_{4}=2.06e^{-12}$ & $\mc{E}^{M-\mbox{\scriptsize HPI19}}_{4}=2.91e^{-12}$
    \\  
    \hline
    \end{tabular}
       \label{tab:error-m19-m9}  
    \end{center}
\end{table}

  \begin{figure}[H]
\centering
\subfigure[Frontal slices and $M$ are generated randomly]{\includegraphics[scale=0.43]{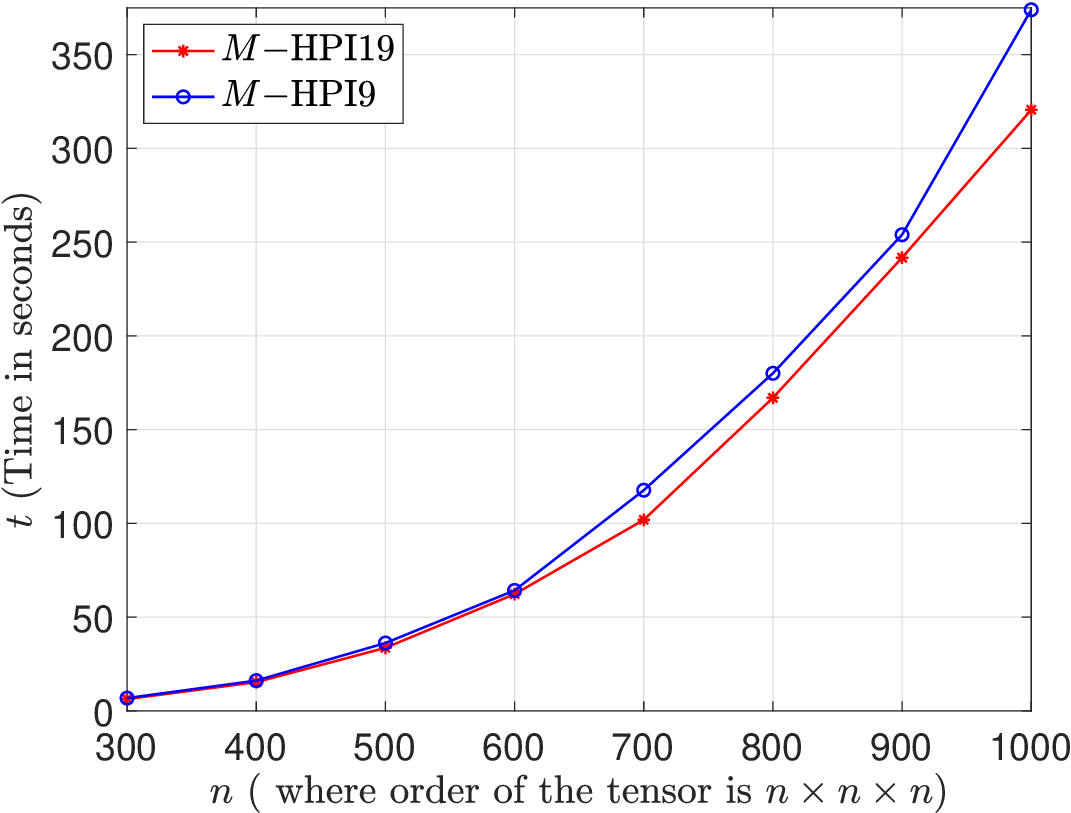}}
\subfigure[\scriptsize{Randomly generated tensors with $M=dftmtx(n)$}]{\includegraphics[scale=0.43]{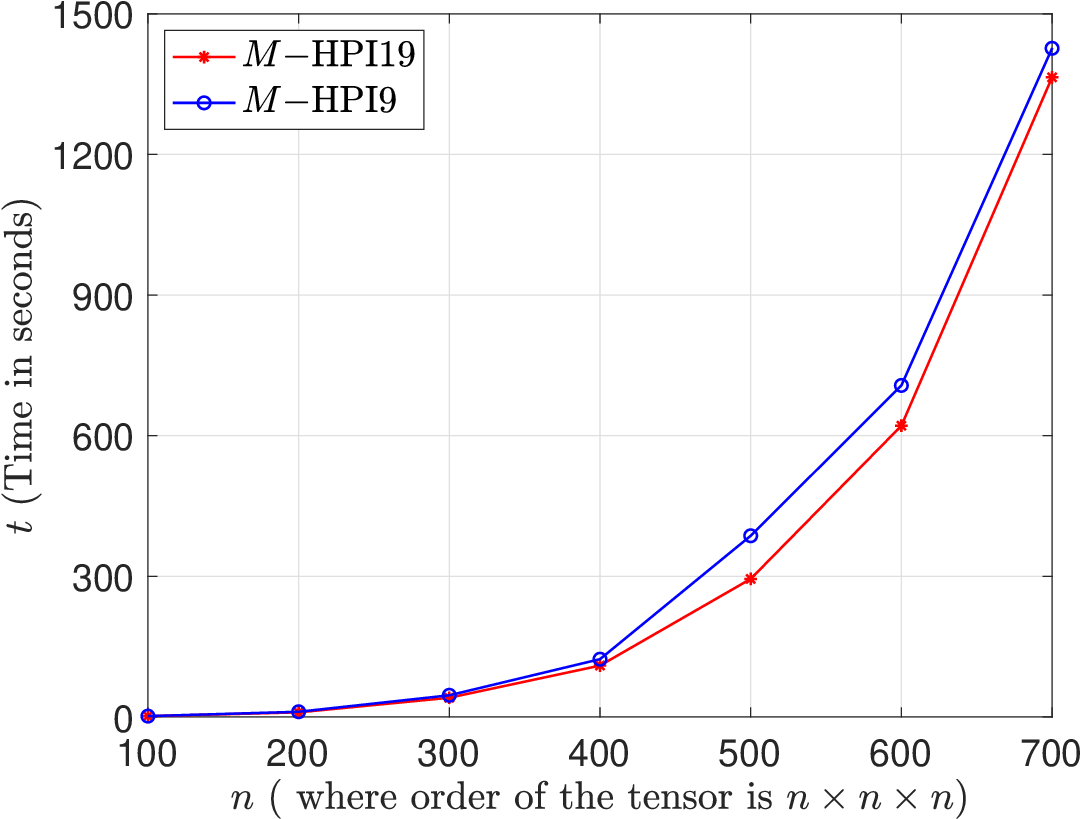}}
\caption{Comparison analysis of mean CPU time of $M$-HPI9 and $M$-HPI19 for different choices of $M$ for computing the Moore-Penrose inverse of the singular tensor $\mc{A}$ with taking $\mc{Z}_0=\frac{\mc{A}^{T}}{\|\mc{A}\|^{2}_F}$ and tolerance $\epsilon=10^{-08}$.}
\label{fig:Fig4}
\end{figure}

\begin{table}[H]
 \begin{center}
    \caption{Error and mean CPU time for computing the Moore-Penrose inverse of tensors and matrices by $M$-HPI19 and  HPI19\cite{ma2022} with $\mc{Z}_0=\frac{\mc{A}^{T}}{\|\mc{A}\|^{2}_F}$,  $Z_0=\frac{A^{T}}{\|A\|^{2}_F}$, tolerance $\epsilon=10^{-8}$ and $M$ is generated randomly.}
         \vspace*{0.2cm}
         \renewcommand{\arraystretch}{1.2}
         \begin{tabular}{cccccc}

        Size of $\mc{A}$ & Size of $A$ & MT$^{\tiny{\mbox{HPI19}}}$  & MT$^{\tiny{M-\mbox{HPI19}}}$ & Error$^{\mbox{\scriptsize HPI9}}$ &Error$^{M-\mbox{\scriptsize HPI19}}$\\ 
           \hline
    \multirow{4}{*}{\small $300\times 300\times 400$} &   \multirow{4}{*}{\small $6000\times 6000$} &\multirow{4}{*}{26.52} &\multirow{4}{*}{7.21}& 
    $\mc{E}^{\mbox{\scriptsize HPI9}}_1=3.55e^{-13}$ & $\mc{E}^{M-\mbox{\scriptsize HPI19}}_1=4.58e^{-13}$
    \\ 
& &  & & $\mc{E}^{\mbox{\scriptsize HPI9}}_{2}=5.02e^{-17}$ & $\mc{E}^{M-\mbox{\scriptsize HPI19}}_{2}=7.39e^{-10}$
    \\  
   
    & &  & & $\mc{E}^{\mbox{\scriptsize HPI9}}_{3}=2.61e^{-14}$ & $\mc{E}^{M-\mbox{\scriptsize HPI19}}_{3}=9.61e^{-11}$
    \\  
    & &  & & $\mc{E}^{\mbox{\scriptsize HPI9}}_{4}=1.19e^{-15}$ & $\mc{E}^{M-\mbox{\scriptsize HPI19}}_{4}=4.24e^{-11}$
\\
 \hline
    \multirow{4}{*}{\small $400\times 400\times 400$} &   \multirow{4}{*}{\small $8000\times 8000$} &\multirow{4}{*}{63.12} &\multirow{4}{*}{13.16}& 
    $\mc{E}^{\mbox{\scriptsize HPI9}}_1=1.13e^{-12}$ & $\mc{E}^{M-\mbox{\scriptsize HPI19}}_1=4.12e^{-12}$
    \\ 
& &  & & $\mc{E}^{\mbox{\scriptsize HPI9}}_{2}=3.27e^{-17}$ & $\mc{E}^{M-\mbox{\scriptsize HPI19}}_{2}=1.13e^{-10}$
    \\  
   
    & &  & & $\mc{E}^{\mbox{\scriptsize HPI9}}_{3}=2.68e^{-14}$ & $\mc{E}^{M-\mbox{\scriptsize HPI19}}_{3}=3.67e^{-11}$
    \\  
    & &  & & $\mc{E}^{\mbox{\scriptsize HPI9}}_{4}=3.25e^{-12}$ & $\mc{E}^{M-\mbox{\scriptsize HPI19}}_{4}=4.90e^{-12}$
\\
\hline
\multirow{4}{*}{\small $500\times 500\times 400$} &   \multirow{4}{*}{\small $10000\times 10000$} &\multirow{4}{*}{134.49} &\multirow{4}{*}{24.42}& 
    $\mc{E}^{\mbox{\scriptsize HPI9}}_1=1.21e^{-12}$ & $\mc{E}^{M-\mbox{\scriptsize HPI19}}_1=5.25e^{-12}$
    \\ 
& &  & & $\mc{E}^{\mbox{\scriptsize HPI9}}_{2}=2.33e^{-17}$ & $\mc{E}^{M-\mbox{\scriptsize HPI19}}_{2}=1.20e^{-10}$
    \\  
   
    & &  & & $\mc{E}^{\mbox{\scriptsize HPI9}}_{3}=2.68e^{-14}$ & $\mc{E}^{M-\mbox{\scriptsize HPI19}}_{3}=1.75e^{-12}$
    \\  
    & &  & & $\mc{E}^{\mbox{\scriptsize HPI9}}_{4}=1.05e^{-15}$ & $\mc{E}^{M-\mbox{\scriptsize HPI19}}_{4}=3.26e^{-12}$
\\
\hline
     \end{tabular}
       \label{tab:error-tenm19-mat9}  
        \end{center}
\end{table}
Table \ref{tab:error-tenm19-mat9} and Figure \ref{fig:Fig5} show that the tensor structure computation under $M$-product is much faster than the matrix structured computation.

\begin{figure}[H]
\centering
\includegraphics[scale=0.53]{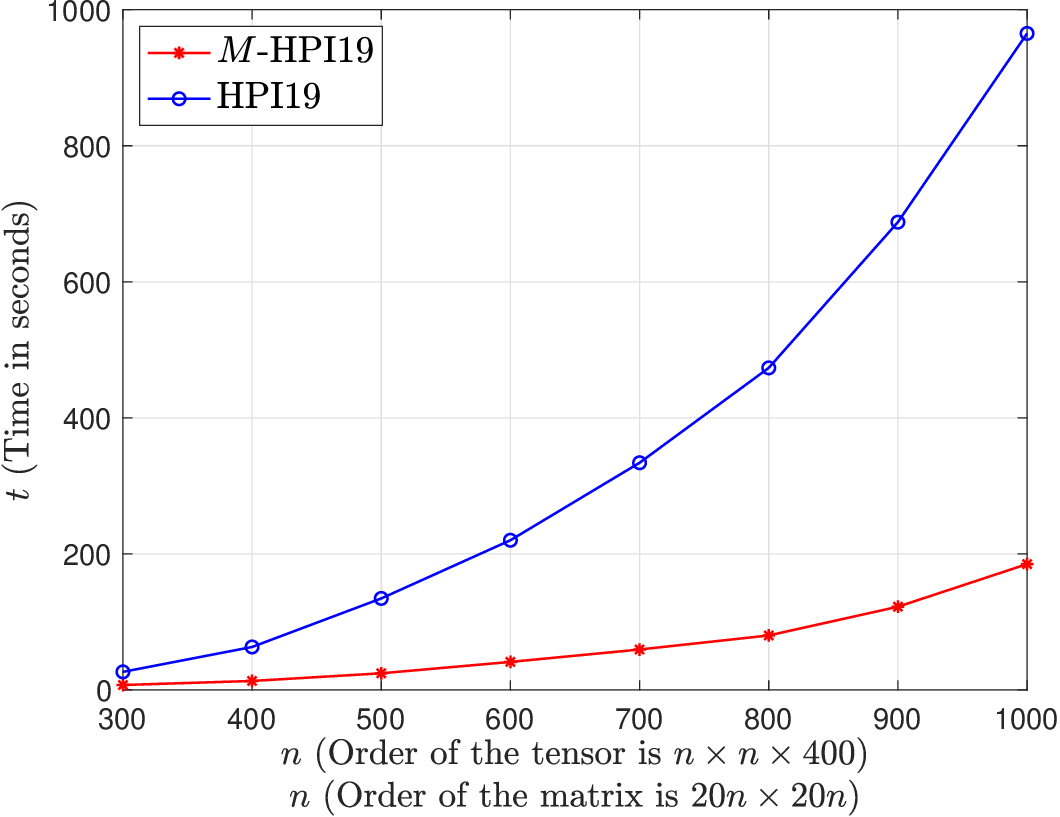}
\caption{Comparison analysis of mean CPU time of $M$-HPI9 and HPI19 \cite{ma2022}  for computing the Moore-Penrose inverse of the singular tensor $\mc{A}$ with taking $\mc{Z}_0=\frac{\mc{A}^{T}}{\|\mc{A}\|^{2}_F}$, tolerance $\epsilon=10^{-08}$ and $M$ is randomly generated.}
\label{fig:Fig5}
\end{figure}
In the Table \ref{tab:error-M-HPI19} and Figure \ref{fig:Fig6}, we have illustrated the computational time for evaluating the Moore-Penrose inverse for different choices of $M$.
  \begin{table}[H]
    \begin{center}
       \caption{ Comparison analysis of Error and mean CPU time for computing the Moore-Penrose inverse by $M$-HPI19, for different choices of $M$  with taking $\mc{Z}_0=\frac{\mc{A}^{T}}{\|\mc{A}\|^{2}_F}$ and tolerance $\epsilon=10^{-08}$.}
       \vspace{0.2cm}
          \renewcommand{\arraystretch}{1.2}
    \begin{tabular}{ccccccc}
    \hline
        Size of $\mc{A}$  & MT$^{DFT}$  & MT$^{M_1}$ & MT$^M$ & Error$^{DFT}$ & Error$^{M_1}$ &Error$^M$\\ 
           \hline
    \multirow{4}{*}{$150\times 150\times 150$} &  \multirow{4}{*}{5.142} &\multirow{4}{*}{2.83}&\multirow{4}{*}{1.31}& 
    $\mc{E}^{DFT}_{1}= 2.25e^{-09}$  & $\mc{E}^{M_1}_{1}=2.41e^{-07}$ & $\mc{E}^{M}_{1}=1.31e^{-12}$
    \\ 
& &  & & $\mc{E}^{DFT}_{2}= 8.82e^{-08}$  & $\mc{E}^{M_1}_{2}=1.130e^{-07}$ & $\mc{E}^{M}_{2}=2.46e^{-10}$
    \\  
          &  & & & $\mc{E}^{DFT}_{3}= 4.52e^{-12}$  & $\mc{E}^{M_1}_{3}=1.01e^{-10}$ & $\mc{E}^{M}_{3}=4.05e^{-12}$
    \\ 
     &  & & &$\mc{E}^{DFT}_{4}= 1.69e^{-09}$  & $\mc{E}^{M_1}_{4}=3.77e^{-08}$ & $\mc{E}^{M}_{4}=1.10e^{-12}$
    \\ 
    \hline
    \multirow{4}{*}{$300\times 300\times 300$} &  \multirow{4}{*}{36.05} &\multirow{4}{*}{26.59}&\multirow{4}{*}{6.84}& 
    $\mc{E}^{DFT}_{1}= 7.85e^{-09}$  & $\mc{E}^{M_1}_{1}=8.34e^{-07}$ & $\mc{E}^{M}_{1}=3.85e^{-12}$
    \\ 
& &  & & $\mc{E}^{DFT}_{2}= 3.64e^{-08}$  & $\mc{E}^{M_1}_{2}=9.74e^{-08}$ & $\mc{E}^{M}_{2}=3.13e^{-09}$
    \\  
          &  & & & $\mc{E}^{DFT}_{3}= 7.06e^{-12}$  & $\mc{E}^{M_1}_{3}=2.33e^{-10}$ & $\mc{E}^{M}_{3}=2.33e^{-11}$
    \\ 
     &  & & & $\mc{E}^{DFT}_{4}= 4.45e^{-09}$  & $\mc{E}^{M_1}_{4}=9.89e^{-07}$ & $\mc{E}^{M}_{4}=3.166e^{-12}$
    \\ 
    \hline
        \multirow{4}{*}{$450\times 450\times 450$} &  \multirow{4}{*}{168.66} &\multirow{4}{*}{77.70}&\multirow{4}{*}{26.95}& 
    $\mc{E}^{DFT}_{1}= 1.21e^{-08}$  & $\mc{E}^{M_1}_{1}=6.97e^{-07}$ & $\mc{E}^{M}_{1}=5.05e^{-12}$
    \\ 
& &  & & $\mc{E}^{DFT}_{2}= 1.13e^{-07}$  & $\mc{E}^{M_1}_{2}=1.44e^{-07}$ & $\mc{E}^{M}_{2}=1.83e^{-06}$
    \\  
          &  & & & $\mc{E}^{DFT}_{3}= 8.52e^{-12}$  & $\mc{E}^{M_1}_{3}=7.47e^{-10}$ & $\mc{E}^{M}_{3}=4.26e^{-10}$
    \\ 
     &  & & & $\mc{E}^{DFT}_{4}= 4.07e^{-09}$  & $\mc{E}^{M_1}_{4}=4.09e^{-06}$ & $\mc{E}^{M}_{4}=6.04e^{-11}$
    \\ 
    \hline
    \end{tabular}
       \label{tab:error-M-HPI19}  
    \end{center}
\end{table}

\begin{figure}[H]
\centering
{\includegraphics[scale=0.5]{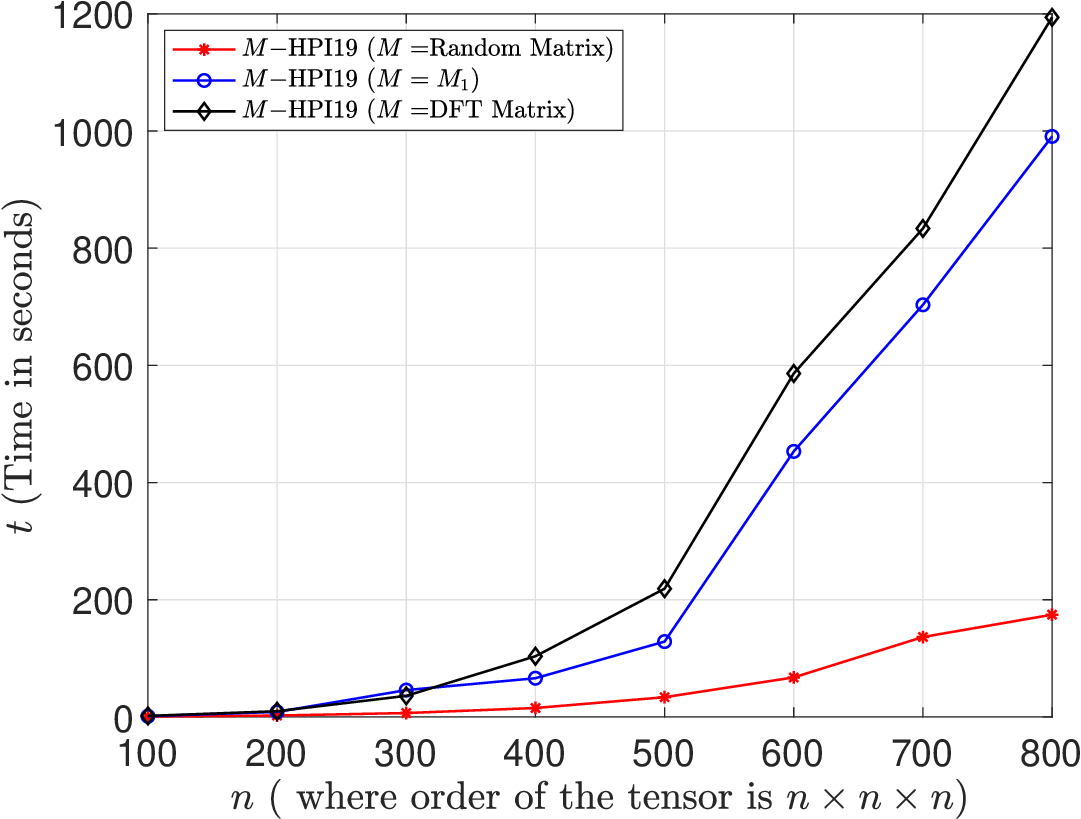}}
\caption{Comparison analysis of mean CPU time of $M$-HPI19 for different choices of $M$ for computing the Moore-Penrose inverse of singular tensor $\mc{A}$ with taking tolerance $\epsilon=10^{-07}$.}
\label{fig:Fig6}
\end{figure} 

We randomly generated tensors $\mathcal{A}$
of varying dimensions $n\times n\times n$ and considered $M$ using the Matlab in-built function $rand(n)$.  Figure \ref{fig:QR_HPI} presents a comparative analysis of computational efficiency between Algorithms \ref{AlgouterQR} and \ref{alg:MHPI}, demonstrating their relative performance across different tensor sizes.

\begin{figure}[H]
\centering
\subfigure{\includegraphics[scale=0.43]{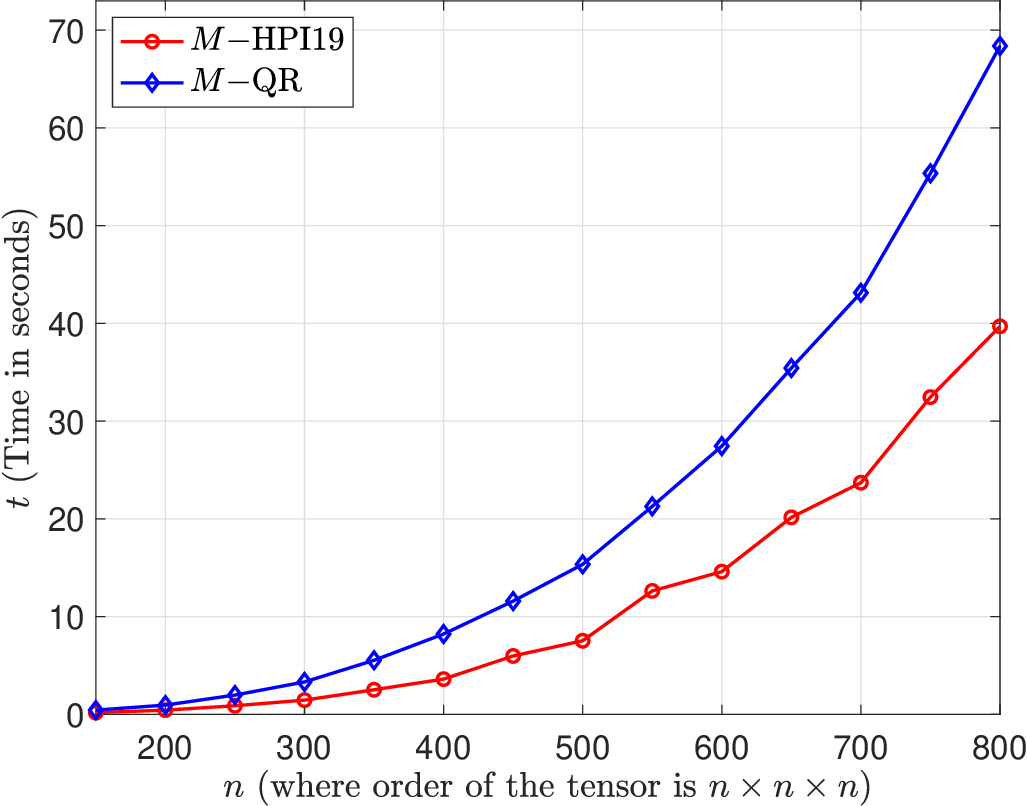}}
\subfigure{\includegraphics[scale=0.43]{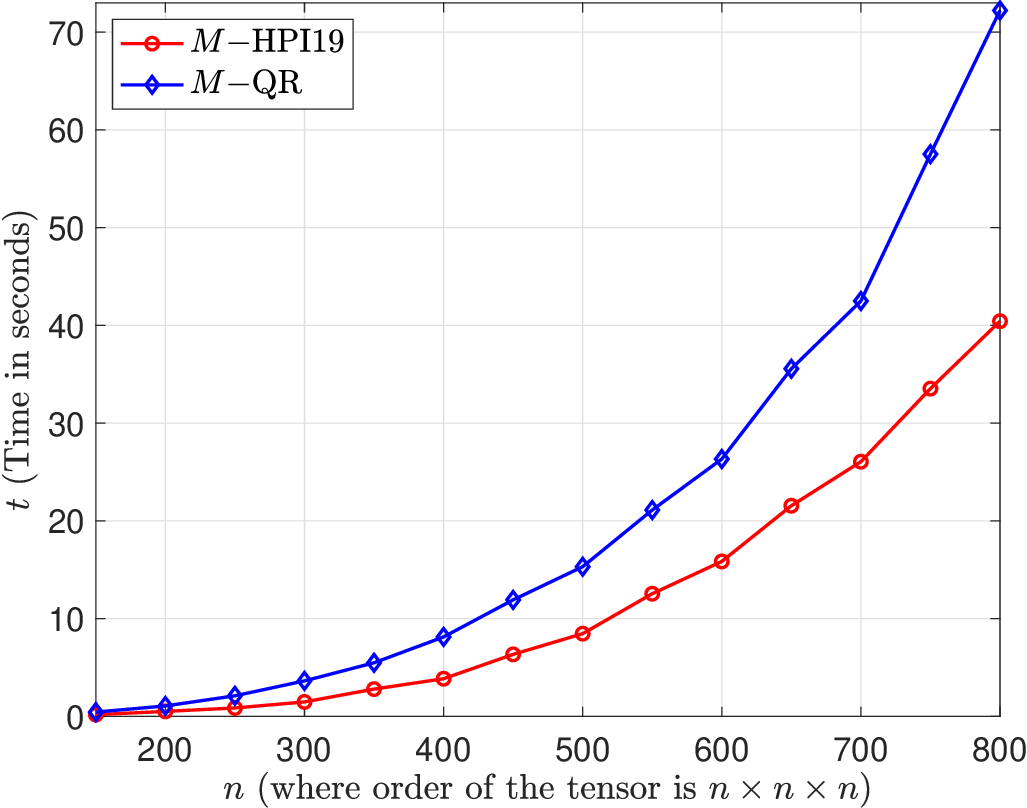}}
\caption{Comparison analysis of mean CPU time for computing the Moore-Penrose inverse by $M$-QR Algorithm \ref{AlgouterQR} and $M$-HPI19 Algorithm \ref{alg:MHPI}.}
\label{fig:QR_HPI}
\end{figure}

\section{Application in image compression and deblurring}
\subsection{Image compression}
Matrix ${QR}$ decomposition is well known for image compression by factoring a matrix into the product of an orthogonal matrix ${Q}$ and an upper triangular matrix ${R}$. In view of this, tensor ${QR}$ decomposition will offer significant advantages in digital color image and video processing applications. Specifically, a digital color image is represented as a tensor of pixel values.  Image partitioning subdivides the original image into smaller blocks, enabling localized compression that preserves regional details while improving computational efficiency. The efficacy of the approach is critically dependent on the rank of the tensor, which determines the number of significant components retained from the decomposition. We factor the true image $\mc{A}\in\mb{F}^{m\times n\times 3}$ as $\mc{A}=\mc{Q}\m\mc{R}$, where $\mc{Q}\in\mb{F}^{m\times k\times 3}$ and $\mc{R}\in\mb{F}^{k\times n\times 3}$. Here $k$ establishes the fundamental trade-off between compression ratio and image fidelity: lower values $k$ yield higher compression ratios but introduce greater information loss, while higher $k$ values preserve image quality at the expense of storage efficiency. To evaluate image quality in compression and restoration, we used peak signal-to-noise ratio (PSNR) and structural similarity index (SSIM) metrics \cite{behera2023}. PSNR quantifies image quality by measuring the ratio between the maximum possible power of a signal and the power of corrupting noise, expressed in logarithmic decibel scale; higher PSNR values generally indicate better quality reconstructions. On the other hand, SSIM addresses this limitation by comparing local patterns of pixel intensities normalized for luminance and contrast, measuring structural information changes, and providing values between $-1$ and $1$, where $1$ indicates perfect similarity.

\begin{figure}[H]
\begin{center}
\subfigure[size: $576 \times 720\times 3$ ]{\includegraphics[height=4.4cm]{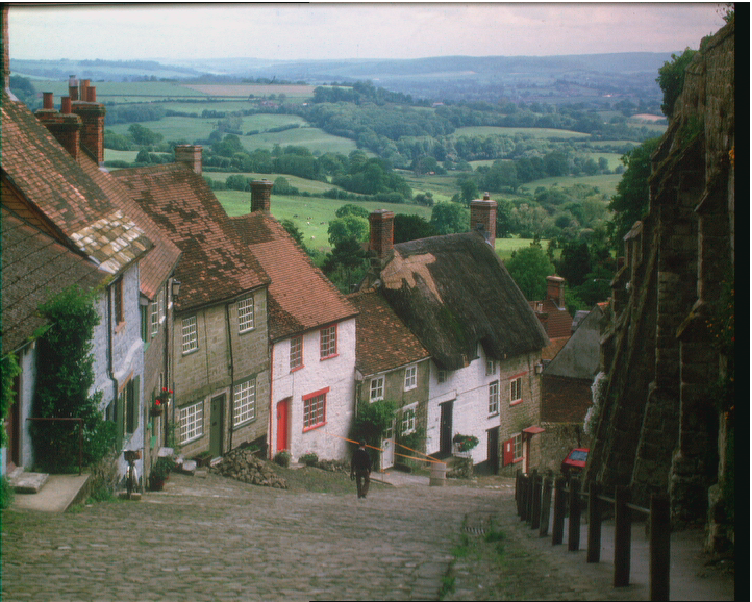}}~~~~
\subfigure[size: $512\times 512\times 3$ ]{\includegraphics[height=4.4cm]{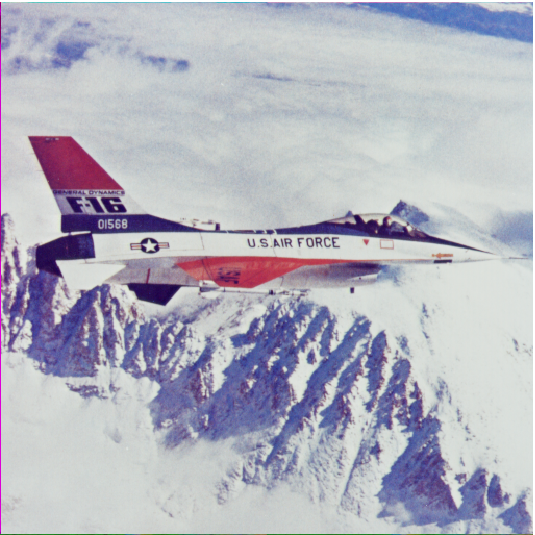}}
\caption{ Original images for compression.
}\label{or-img1}\vspace{.5cm}
\subfigure[$k=50$]{\includegraphics[height=2.3cm,width=2.7cm]{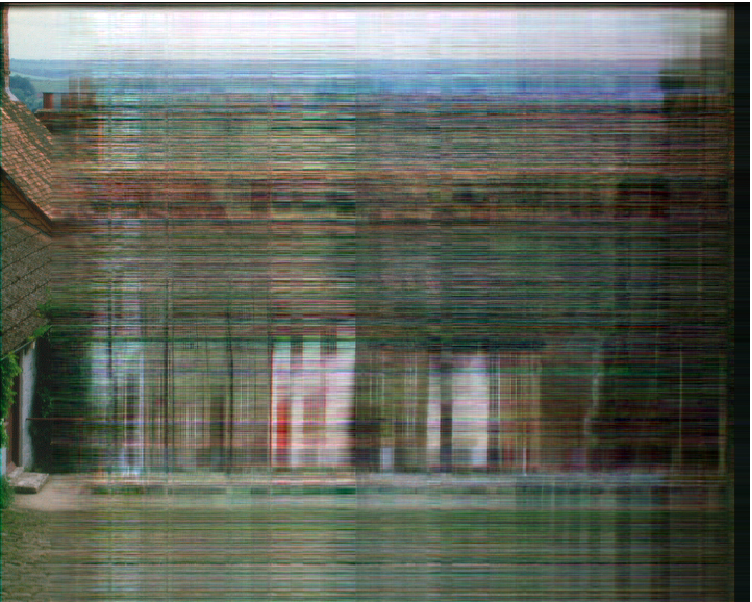}}~~
\subfigure[$k=100$]{\includegraphics[height=2.3cm,width=2.7cm]{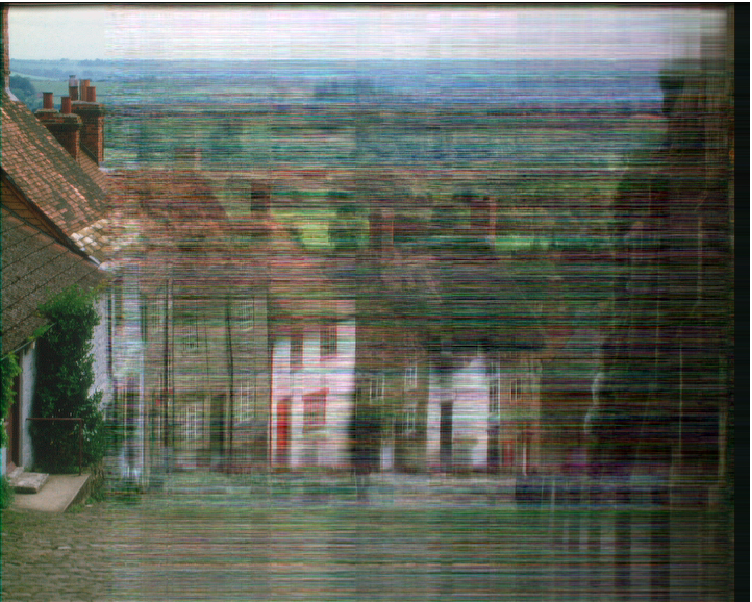}}~~
\subfigure[$k=200$]{\includegraphics[height=2.3cm,width=2.7cm]{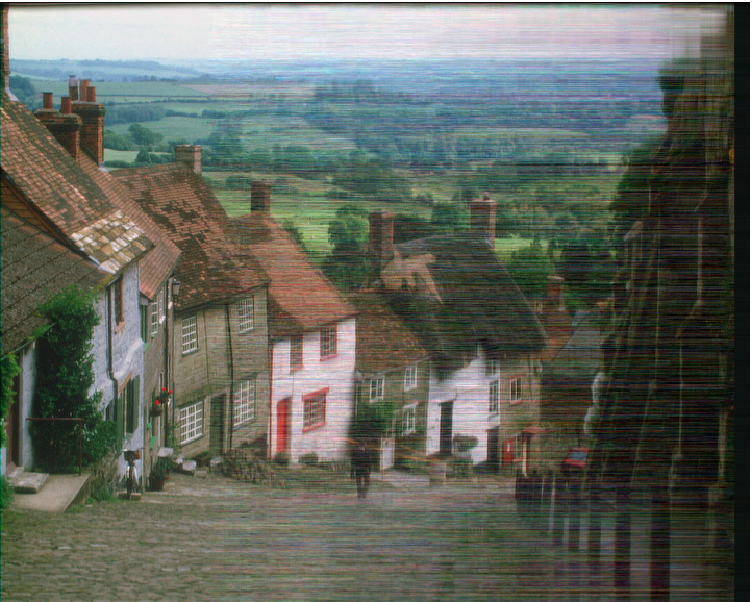}}~~
\subfigure[$k=300$]{\includegraphics[height=2.3cm,width=2.7cm]{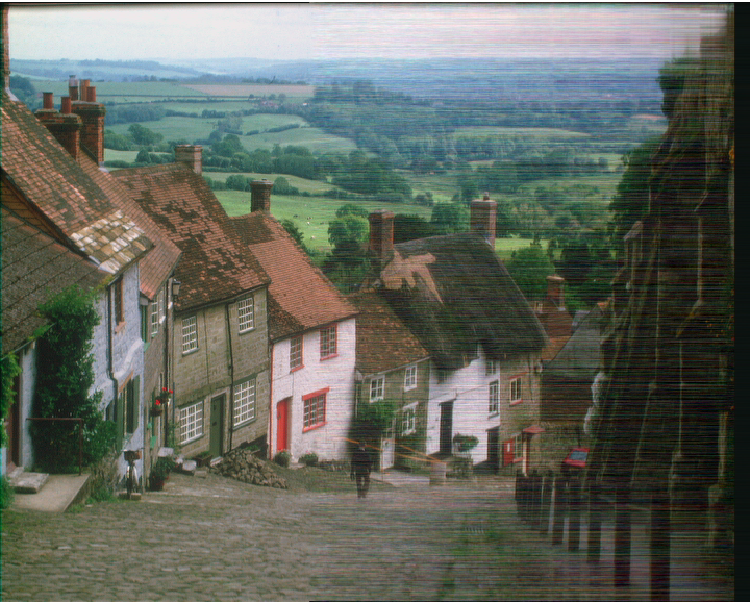}}~~
\subfigure[$k=400$]{\includegraphics[height=2.3cm,width=2.7cm]{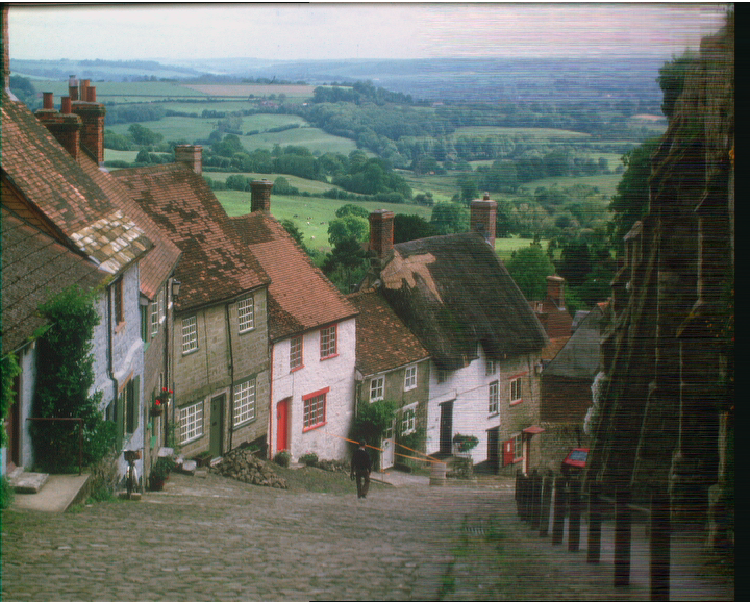}}
\caption{Compressed image of Figure \ref{or-img1} (a), for different choices of $k$.
}\label{comp-im1}\vspace{.5cm}
\subfigure[$k=50$]{\includegraphics[height=2.3cm,width=2.7cm]{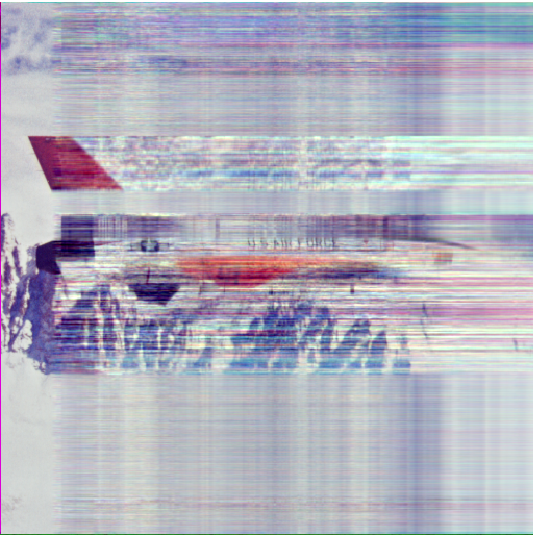}}~~
\subfigure[$k=100$]{\includegraphics[height=2.3cm,width=2.7cm]{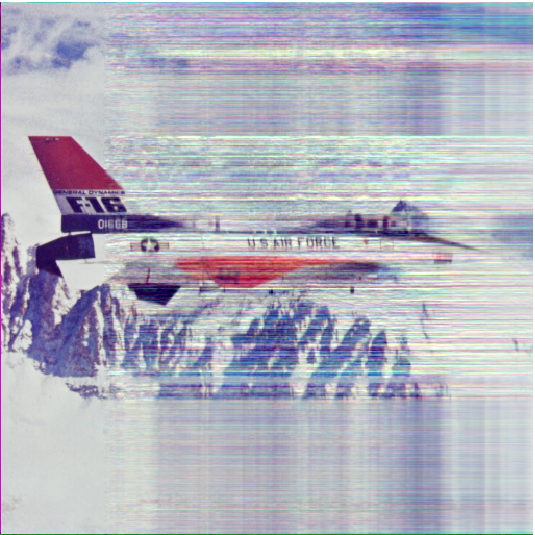}}~~
\subfigure[$k=200$]{\includegraphics[height=2.3cm,width=2.7cm]{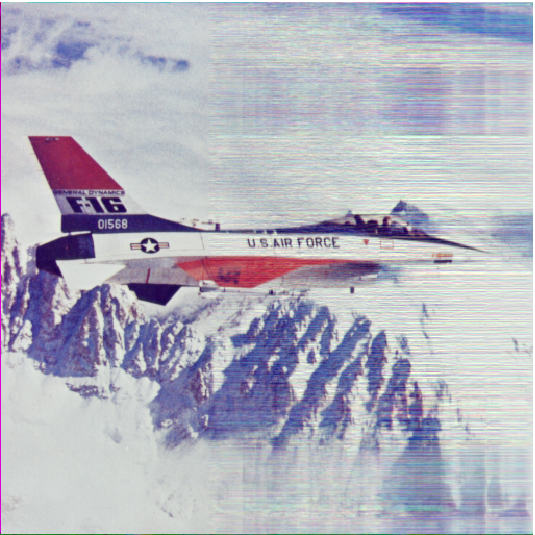}}~~
\subfigure[$k=300$]{\includegraphics[height=2.3cm,width=2.7cm]{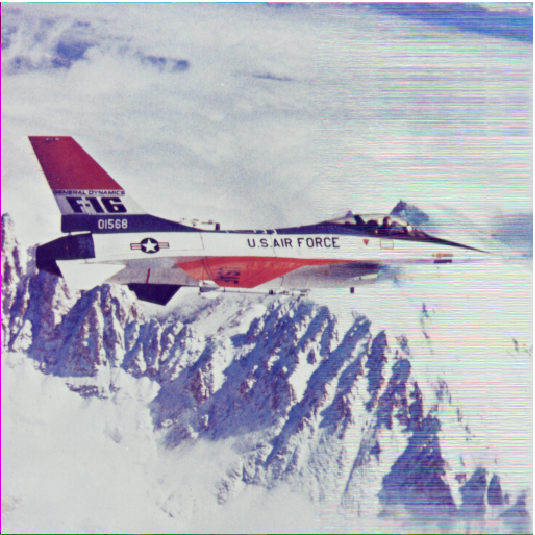}}~~
\subfigure[$k=400$]{\includegraphics[height=2.3cm,width=2.7cm]{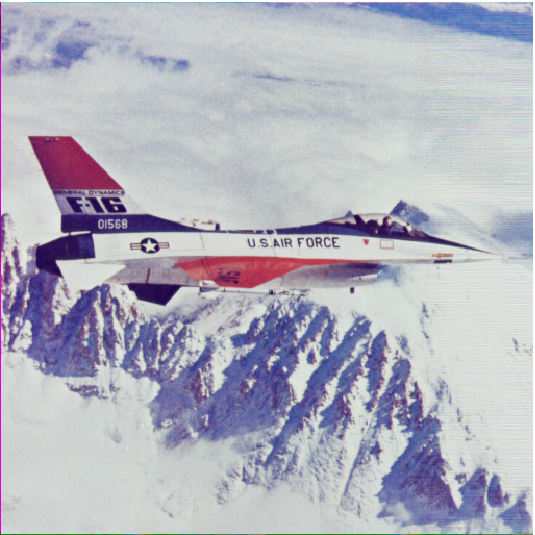}}
\caption{Compressed image of Figure \ref{or-img1} (b), for different choices of $k$.
}\label{comp-im2}\vspace{.5cm}
\subfigure{\includegraphics[scale=0.4]{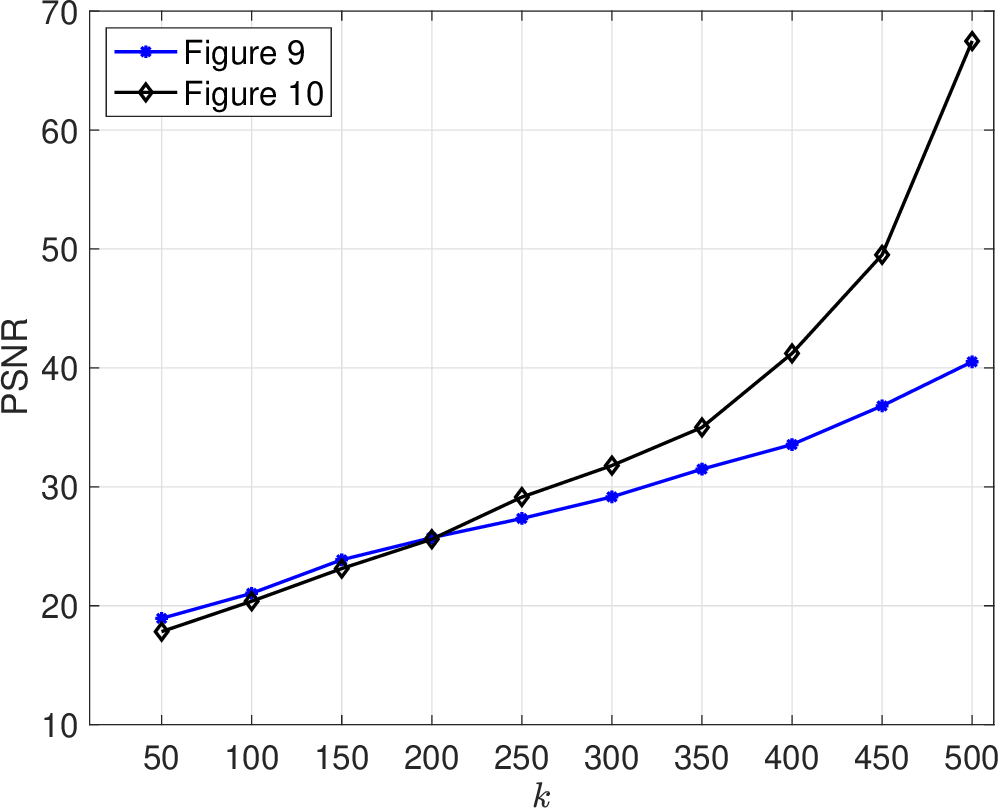}}~~~
\subfigure{\includegraphics[scale=0.4]{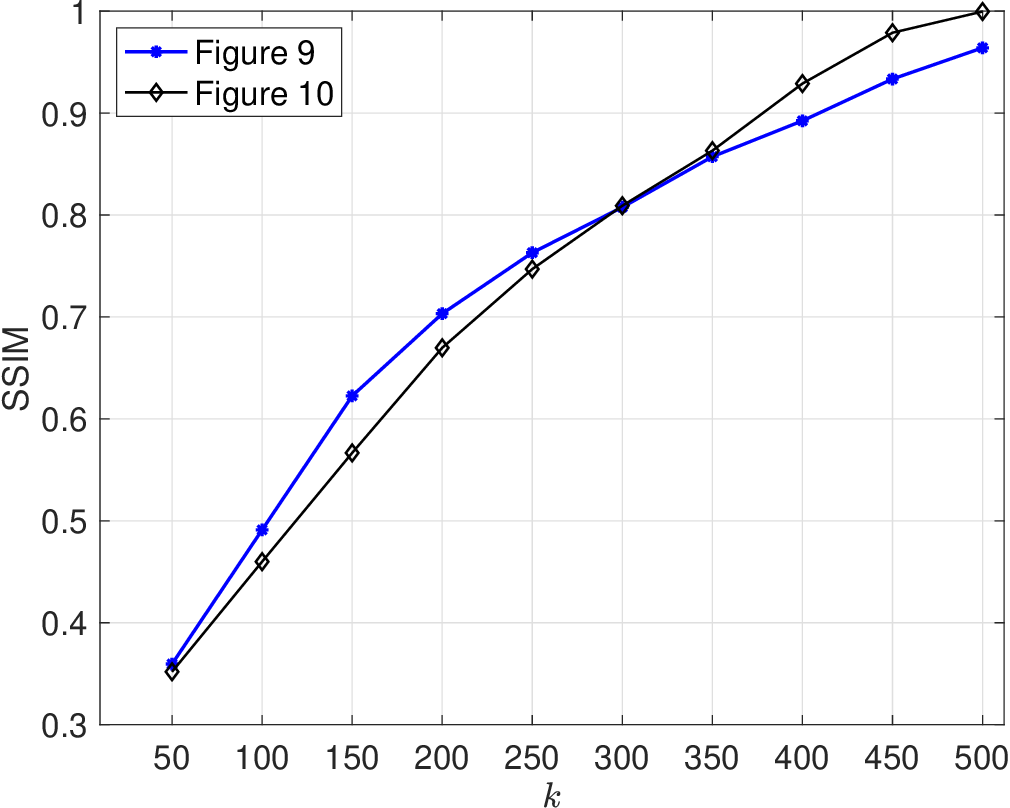}}
\caption{Quality assessment of the compressed images (Figure \ref{comp-im1} and \ref{comp-im2}), for different choices of $k$. }
\label{fig:ssim-psnr}
\end{center}
\end{figure}

In order to illustrate the accuracy and efficiency of the tensor $\mc{QR}$ decomposition. We consider two color images in Figure \ref{or-img1}; that is, Figure \ref{or-img1} (a) image size $576\times720\times 3$ and Figure \ref{or-img1} (b) image size $512\times512\times 3$. The quality assessment of the compressed image with a different value of $k$ is plotted in Figure \ref{comp-im1} for the image presented in Figure \ref{or-img1} (a). Similarly, in view of Figure \ref{or-img1} (b), the quality assessment of the compressed image with a different value of $k$ is plotted in Figure \ref{comp-im2}.

In Figure \ref{fig:ssim-psnr} (a) we have plotted the relation between $k$ and the PSNR values for the original image presented in Figures \ref{or-img1} (a) and (b). Generally, a PSNR value that exceeds $30$ dB indicates good image quality with acceptable levels of distortion.  Similarly, in Figure \ref{fig:ssim-psnr} (b), we have plotted the relation between $k$ and the SSIM values for the original image presented in Figures \ref{or-img1} (a) and (b). Here, SSIM values closer to $1$ indicate a greater structural similarity to the original images Figures \ref{or-img1} (a) and (b).

\subsection{Image deblurring}
This subsection investigates the application of the tensor-structured hyperpower iterative method (to compute the Moore-Penrose inverses) of third-order tensors to reconstruct color images from blurred images. Recent investigations by Behera {\it et al.} \cite{behera2022} and Sahoo {\it  et al.} \cite{Panda25} have made notable contributions to color image deblurring problems. The image reconstruction problem can be formulated as follows:
\begin{equation*}
    \mc{A}\m \mc{X}=\mc{B},~ \text{where~}{\mc{X}\in \mathbb{R}^{n\times n \times 3}, ~\mc{B}\in \mathbb{R}^{n\times n \times 3}},
\end{equation*}
where $\mc{A}\in \mathbb{R}^{n\times n \times 3}$ represent the blurring tensor that we are constructing using a banded Toeplitz matrix. Specifically, the frontal slices of the blurring tensor $\mc{A}^{n \times n \times 3}$ are given by
$$ \mc{A}(i,j,k) =\left\{\begin{array}{ll}
\frac{1}{\sigma \sqrt{2 \pi}} e^{-\frac{(i-j)^{2}}{2 \sigma^{2}}}, & |i-j| \leq r,\\
0, & \text { otherwise, }
\end{array}\right.$$
for $k=1,2,3$.
Here, $r$ is the bandwidth, and $\sigma$ controls the amount of smoothing, i.e., the problem is ill-posed when $\sigma$ is larger. Further, the tensor $\mc{X}$ denotes the original color image tensor, which we must find, and $\mc{B}\in \mathbb{R}^{n\times n \times 3}$ represents the observed blurred image tensor.

The efficacy of the proposed approach is evaluated on two color images, where each image of size $128\times 128\times 3$. Using the different values of $\sigma$, and fixing the value of $r=6$, we generate the blurred image $\mc{B} = \mc{A}\m\mc{X} + \mc{N}$, where $\mc{N}$ is a noise tensor, which is constructed by taking each slice as Gaussian noise with the mean $0$ and variance $10^{-4}$. The true images are provided in Figure \ref{or-img} (a) and Figure \ref{or-img} (b). The blurred and noisy images are shown in Figure \ref{bl-img1} and Figure \ref{bl-img2} with a different value of $\sigma$. With the help of the tensor-structured hyperpower iterative method, we compute the reconstructed images, which are shown in Figure \ref{recon-im1} and Figure \ref{recon-im2}. Table \ref{tab:my_label} demonstrated the quality of the reconstructed images, that is, the relation between $\sigma$, $r$ with the PSNR and SSIM values for the images presented in Figure \ref{recon-im1} and Figure \ref{recon-im2}.

\begin{figure}[H]
\begin{center}
\subfigure[]{\includegraphics[height=4.4cm]{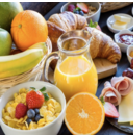}}~~~~
\subfigure[]{\includegraphics[height=4.4cm]{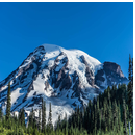}}
\caption{ Original images, where each of size $128\times 128\times 3$. 
}\label{or-img}
\subfigure[$\sigma=4$]{\includegraphics[height=4.4cm]{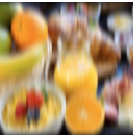}}~~~~
\subfigure[$\sigma=5$]{\includegraphics[height=4.4cm]{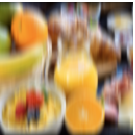}}~~~~
\subfigure[$\sigma=10$]{\includegraphics[height=4.4cm]{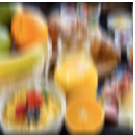}}
\caption{Blurred noisy images of Figure \ref{or-img} (a), for different choices of $\sigma$ and $r=6$.
}\label{bl-img1}
\subfigure[$\sigma=4$]{\includegraphics[height=4.4cm]{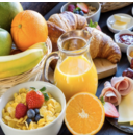}}~~~~
\subfigure[$\sigma=5$]{\includegraphics[height=4.4cm]{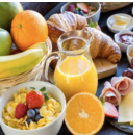}}~~~~
\subfigure[$\sigma=10$]{\includegraphics[height=4.4cm]{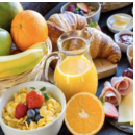}}
\caption{Reconstructed images of Figure \ref{bl-img1}, for the corresponding choices of $\sigma$ and $r=6$.
}\label{recon-im1}
\end{center}
\end{figure}

\begin{table}[H]
    \centering
    \caption{Quality assessment of the reconstructed images (Figure \ref{recon-im1} and \ref{recon-im2}), for different choices of $\sigma$ and $r$.}
     \vspace{0.2cm}
          \renewcommand{\arraystretch}{1.2}
     \begin{tabular}{ccccc|ccc}
    \hline
      $\sigma$ & $r$ &Figure    & SSIM& PSNR& Figure  & SSIM& PSNR\\
     \hline
       4 &   \multirow{3}{*}{6} & \ref{recon-im1}(a)   &   \multirow{3}{*}{0.99}& 28.22 &  \ref{recon-im2}(a)  & \multirow{3}{*}{0.99} &29.50\\
       5 &  & \ref{recon-im1}(b) &  &47.32& \ref{recon-im2}(b)&  &47.05\\
        10  &  & \ref{recon-im1}(c)& &51.45& \ref{recon-im2}(c) & &52.03\\
      \hline
    \end{tabular}
    \label{tab:my_label}
\end{table}

\begin{figure}[H]
\begin{center}
\subfigure[$\sigma=4$]{\includegraphics[height=4.4cm]{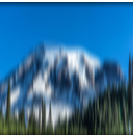}}~~~~
\subfigure[$\sigma=5$]{\includegraphics[height=4.4cm]{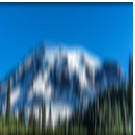}}~~~~
\subfigure[$\sigma=10$]{\includegraphics[height=4.4cm]{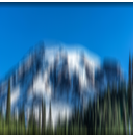}}
\caption{Blurred noisy images of Figure \ref{or-img} (b), for different choices of $\sigma$ and $r=6$.
}\label{bl-img2}
\subfigure[$\sigma=4$]{\includegraphics[height=4.4cm]{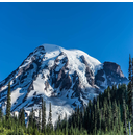}}~~~~
\subfigure[$\sigma=5$]{\includegraphics[height=4.4cm]{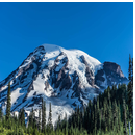}}~~~~
\subfigure[$\sigma=10$]{\includegraphics[height=4.4cm]{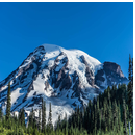}}
\caption{Reconstructed images of Figure \ref{bl-img2}, for the corresponding choices of $\sigma$ and $r=6$.
}\label{recon-im2}
\end{center}
\end{figure}

\section{Conclusion}
The outer inverse of tensors is introduced in the framework of the $M$-product, which is a family of tensor–tensor products, generalization of the $t$-product and $c$-product. In addition to this, we discussed a powerful tensor-based iterative method, the $M$-HPI method, for computing outer inverses. Further, the $M$-QR algorithm and $M$-HPI method for computing outer inverses of tensors are designed. As a consequence, we have calculated the Drazin inverse and the Moore--Penrose inverse for third-order tensors. In addition, we examine the practical applications of $M$-QR decomposition and hyperpower iterative methods for image compression and deblurring operations, demonstrating their effectiveness in real-world scenarios. Thus, the strength of the $M$-product structure is that the results obtained under the $M$-product structure are more efficient than those obtained by matrix-matrix multiplication, $t$-product, and $c$-product structure. Possible further research includes mainly the following problems:
\begin{itemize}
\item Development of adaptive tensor-based hyperpower iterative methods that automatically optimize the choice of initial approximation and $M$. 
\item Investigation of parallel computing implementations of $M$-QR decomposition and hyperpower iterative methods to compute
outer inverses of tensors to improve computational efficiency for large-scale applications.
\item Theoretical analysis of error bounds and stability conditions under various tensor perturbations.
\item Extension of the $M$-QR decomposition to handle sparse tensors with specific structural properties.
\item Integration of these methods with deep learning frameworks for hybrid computational approaches.
\end{itemize}

\section*{Data Availability Statement}
The datasets were generated or analyzed during the current study can be shared upon reasonable request to the corresponding author.

\section*{Funding}
\begin{itemize}
\item Ratikanta Behera is grateful for the support of the Anusandhan National Research Foundation (ANRF), India, under Grant No. EEQ/2022/001065. 
\item Jajati Keshari Sahoo is grateful for the support of the Anusandhan National Research Foundation (ANRF), India, under Grant No. SUR/2022/004357. 
\item Yimin Wei is grateful for the support of  the National Natural Science
Foundation of China under grant U24A2001 
and the Ministry
of Science and Technology of China under grant H20240841.
\end{itemize}

\section*{Conflict of Interest}
The authors declare that they have no potential conflict of interest.
\section*{ORCID}
Ratikanta Behera\orcidA \href{https://orcid.org/0000-0002-6237-5700}{ \hspace{2mm}\textcolor{lightblue}{ https://orcid.org/0000-0002-6237-5700}}\\
Krushnachandra Panigrahy \orcidC \href{https://orcid.org/0000-0003-0067-9298}{ \hspace{2mm}\textcolor{lightblue}{https://orcid.org/0000-0003-0067-9298}} \\
Jajati Keshari Sahoo \orcidC \href{https://orcid.org/0000-0001-6104-5171}{ \hspace{2mm}\textcolor{lightblue}{https://orcid.org/0000-0001-6104-5171}} \\
Yimin Wei \orcidD \href{https://orcid.org/0000-0001-6192-0546}{\hspace{2mm}\textcolor{lightblue}{https://orcid.org/0000-0001-6192-0546}}

\section*{Acknowledgments} 
The authors thank the handling editor and the referees for their very detailed comments, which helped to improve the manuscript.

\bibliographystyle{abbrv}
\bibliography{References}
\end{document}